\title{Preprojective algebras and c-sortable words}
\author{Claire Amiot}
\address{Institut de Recherche de Math\'ematique Avanc\'ee, 7 rue Ren\'e Descartes, 67084 Strasbourg Cedex, France}
\email{amiot@math.unistra.fr}
\thanks{All authors were supported by the Storforsk-grant 167130 from the Norwegian Research Council}
\author{Osamu Iyama}
\address{Graduate School of Mathematics, Nagoya University}
\email{iyama@math.nagoya-u.ac.jp}
\thanks{The second author was supported by JSPS Grant-in-Aid for Scientific Research 21740010 and 21340003}
\author{Idun Reiten}
\address{Insitutt for matematiske fag,
Norges Teknisk-Naturvitenskapelige Universitet,
N-7491 Trondheim, Norway}
\email{idunr@math.ntnu.no}
\author{Gordana Todorov}
\address{Departement of Mathematics, Northeastern University, Boston, MA 02115, USA}
\email{g.todorov@neu.edu}
\thanks{The fourth author was also supported by the NSA-grant MSPF-08G-228}
\newcommand{\Hom}{{\sf Hom }}\newcommand{\Rad}{{\sf Rad }}
\newcommand{\End}{{\sf End }}
\newcommand{\Ext}{{\sf Ext }}
\newcommand{\Tor}{{\sf Tor}}
\newcommand{\Ker}{{\sf Ker }}
\newcommand{\Coker}{{\sf Coker }}
\newcommand{\Imm}{{\sf Im }}
\renewcommand{\mod}{{\sf mod \hspace{.02in}  }}\newcommand{\fl}{{\sf f.l. \hspace{.02in}  }}
\newcommand{\Mod}{{\sf Mod \hspace{.02in} }}
\newcommand{\proj}{{\sf proj \hspace{.02in} }}
\newcommand{\Sub}{{\sf Sub \hspace{.02in} }}
\newcommand{\Fac}{{\sf Fac \hspace{.02in} }}
\newcommand{\add}{{\sf add \hspace{.02in} }}
\newcommand{\ten}{\otimes}
\newcommand{\lten}{\overset{\mathbf{L}}{\ten}}
\newcommand{\Cc}{\mathcal{C}}
\newcommand{\Dd}{\mathcal{D}}
\newcommand{\Ff}{\mathcal{F}}
\newcommand{\Kk}{\mathcal{K}}
\newcommand{\ww}{\mathbf{w}}
\newcommand{\bsm}{\begin{smallmatrix}}
\newcommand{\esm}{\end{smallmatrix}}
\newtheorem{thma}{Theorem}[section]
\newtheorem{lema}[thma]{Lemma}
\newtheorem{cora}[thma]{Corollary}
\newtheorem{prop}[thma]{Proposition}
\theoremstyle{remark}
\newtheorem{rema}[thma]{Remark}
\newtheorem{exa}[thma]{Example}
\theoremstyle{definition}
\newtheorem{dfa}[thma]{Definition}
\newtheorem{pb}[thma]{Problem}
\begin{document}

\begin{abstract}
 Let $Q$ be an acyclic quiver and $\Lambda$ be the complete preprojective algebra of $Q$ over an algebraically closed field $k$. To any element $w$ in the Coxeter group of $Q$, Buan, Iyama, Reiten and Scott have introduced and studied in \cite{Bua2} a finite dimensional algebra $\Lambda_w=\Lambda/I_w$. In this paper we look at filtrations of $\Lambda_w$ associated to any reduced expression $\ww$ of $w$. We are especially interested in the case where the word $\ww$ is $c$-sortable, where $c$ is a Coxeter element. In this situation, the consecutive quotients of this filtration can be related to tilting $kQ$-modules with finite torsionfree class.
\end{abstract}

\maketitle
\tableofcontents
\section*{Introduction}
Attempts to categorify the cluster algebras of Fomin and Zelevinsky \cite{FZ1} have led to the investigation of categories with the 2-Calabi-Yau property (2-CY for short) and their cluster-tilting objects. Main early classes of examples were the cluster categories associated with finite dimensional path algebras \cite{Bua} and the preprojective algebras of Dynkin type \cite{Gei3}. This paper is centered around the more general class of stably 2-CY and triangulated 2-CY categories associated with elements in Coxeter groups \cite{Bua2} (the adaptable case was done independently in \cite{Gei2}), and their relationship to the generalized cluster categories from \cite{Ami3} (see Section 4 for the definition).

Let $Q$ be a finite connected quiver with vertices $1,\ldots,n$, and $\Lambda$ the complete preprojective algebra of the quiver $Q$ over a field $k$. Denote by $s_1,\ldots,s_n$ the distinguished generators in the corresponding Coxeter group $W_Q$. To an element $w$ in $W_Q$, there is associated a stably 2-CY category $\Sub\Lambda_w$ and a triangulated 2-CY category $\underline{\Sub}\Lambda_w$. The definitions are based on first associating an ideal $I_i$ in $\Lambda$ to each $s_i$, hence to any reduced word by taking products. This way we also get a finite dimensional algebra $\Lambda_w:=\Lambda/I_w$. Objects of the category $\Sub \Lambda_w$ are submodules of finite dimensional free $\Lambda_w$-modules. The cluster category is then equivalent to $\underline{\Sub}\Lambda_w$ with $w=c^2$, where $c$ is a Coxeter element such that $c^2$ is a reduced expression \cite{Bua2,Gei2}. When $\Lambda$ is a preprojective algebra of Dynkin type, then the category $\mod \Lambda$ as 
 investigated in \cite{Gei3} is also obtained as $\Sub\Lambda_w$ where $w$ is the longest element \cite[III 3.5]{Bua2}.

Using the construction of ideals we get for each reduced expression $\mathbf{w}=s_{u_1}s_{u_2}\ldots s_{u_l}$ a chain of ideals $$\Lambda\supset I_{u_1}\supset I_{u_2}I_{u_1}\supset \ldots \supset I_w,$$ which gives rise to an interesting set of $\Lambda$-modules:
$$L^1_\ww:=\frac{\Lambda}{I_{u_1}},\ L^2_\ww:=\frac{I_{u_1}}{I_{u_2}I_{u_1}}, \ldots, L^l_\ww:=\frac{I_{u_{l-1}}\ldots I_{u_1}}{I_w}$$
which all turn out to be indecomposable and to lie in $\Sub\Lambda_w$. 

The investigation of this set of modules, which we call \emph{layers}, from different points of view, including connections with tilting theory, is one of the main themes of this paper, especially for a class of words called $c$-sortable.

\medskip
The modules $L^1_\ww,\ldots, L^l_\ww$ provide a natural filtration for the cluster-tilting object $M_\ww$ associated with the reduced expression $\ww=s_{u_1}\ldots s_{u_l}$ (see Section 1). These modules can be used to show that the endomorphism algebras $\End_\Lambda (M_\ww)$ are quasi-hereditary \cite{IR2}. Here we show that these modules are rigid (Theorem \ref{2spherical}), that is $\Ext^1_\Lambda(L_\ww^j,L^j_\ww)=0$ and that their dimension vectors are real roots (Theorem \ref{realroot}), so that there are unique associated indecomposable $kQ$-modules $(L^j_\ww)_Q$ (which are not necessarily rigid).

\medskip
The situation is especially nice when all layers are indecomposable $kQ$-modules, so that $L^j_\ww=(L^j_\ww)_Q$. This is the case for $c$-sortable words.  An element $w$ of $W_Q$ is $c$-sortable when there exists a reduced expression of $w$ of the form $\ww=c^{(0)}c^{(1)}\ldots c^{(m)}$ with $$supp(c^{(m)})\subseteq\ldots \subseteq supp(c^{(1)})\subseteq supp(c^{(0)})\subseteq supp(c),$$ where $c$ is a Coxeter element, that is, a word containing each generator $s_i$ exactly once, and in an order admissible with respect to the orientation of $Q$. 

Starting with the tilting $kQ$-module $kQ$ (when $c^{(0)}=c$), there is a natural way of performing exchanges of complements of almost complete tilting modules, determined by the given reduced expression. We denote the final tilting module by $T_\ww$, and the indecomposable $kQ$-modules used in the sequence of constructions by $T^j_\ww$ for $j=1,\ldots ,l$. We show that $L^j_\ww$ is a $kQ$-module in this case and that $L^j_\ww\simeq T^j_\ww$ for all $j$ (Theorem \ref{LequalT}) and we also show that the indecomposable modules in the torsionfree class $\Sub (T_\ww)$ are exactly the $T^j_\ww$ (Theorem \ref{Twtilting}). In particular this gives a one-one correspondence between $c$-sortable words and torsionfree classes, as first shown in \cite{Tho} using different methods (see also \cite{IT}).

There is another sequence $U^1_\ww,\ldots U^l_\ww$ of indecomposable $kQ$-modules, defined using restricted reflection functors, which coincide with the above sequences. This is both interesting in itself, and provides a method for proving $L^j_\ww\simeq T^j_\ww$ for $j=1,\ldots ,l$. 

In another paper \cite{AIRT2}, we give a description of the layers from a functorial point of view. When the $c$-sortable word is $c^m$, and $c=s_1\ldots s_n$, then the successive layers are given by $$P_1,\ldots, P_n, \tau^{-}P_1, \ldots, \tau^{-}P_n,\tau^{-2}P_1, \ldots, \tau^{-m}P_n$$ for the indecomposable projective $kQ$-modules $P_i$, where $\tau$ denotes the AR-translation. In the general case we will give a description of the layers using specific factor modules of the above modules. 

The generalized cluster categories $\Cc_A$ for algebras $A$ of global dimension at most two were introduced in \cite{Ami3}. It was shown that for a special class of words $w$, properly contained in the dual of the $c$-sortable words, the 2-CY category $\underline{\Sub}\Lambda_w$ is triangle equivalent to some $\Cc_A$. We point out  that the procedure for choosing $A$ works more generally for any dual of a $c$-sortable word (Theorem \ref{equivalence}).

\medskip
The paper is organized as follows. We start with some background material on 2-CY categories associated with reduced words, on complements of almost complete tilting modules and on reflection functors. In Section 2 we show that for any reduced word $w$, the associated layers are indecomposable rigid modules, which also are positive real roots. Hence there are unique associated indecomposable $kQ$-modules. In Section 3 we show that our three series of indecomposable modules $\{L^j_\ww\}$, $\{T^j_\ww\}$ and $\{U^j_\ww\}$ coincide in the $c$-sortable case.  Section  4 is devoted to examples and questions beyond the $c$-sortable case.

Some of this work was presented at the conferences `Homological and geometric methods in representation theory' in Trondheim in August 2009, `Interplay between representation theory and geometry' in Beijing in May 2010, and in seminars in Bonn and Torun in 2010.

\subsection*{Notation}
Throughout $k$ is an algebraically closed field. The tensor product $-\ten-$, when not specified, will be over the field $k$. For a $k$-algebra $A$, we denote by $\mod A$ the category of finitely presented right $A$-modules, and by $\fl A$ the category of finite length right $A$-modules. For a quiver $Q$ we denote by $Q_0$ the set of vertices and by $Q_1$ the set of arrows, and for $a\in Q_1$ we denote by $s(a)$ its source and by $t(a)$ its target.

\subsection*{Acknowledgements}
This work was done when the first author was a postdoc at NTNU, Trondheim. She would like to thank the Research Council of Norway for financial support.
Part of this work was done while the second author visited NTNU during March and August 2009. He would like to thank the people in Trondheim for their hospitality.

The authors would also like to thank the referee for reading the paper thoroughly, and for several useful suggestions. 
\section{Background}

\subsection{2-Calabi-Yau categories associated with reduced words}
Let $Q$ be a finite connected quiver without oriented cycles and with vertices $Q_0=\{1,\ldots, n\}$. 
For $i,j\in Q_0$ we denote by $m_{ij}$ the positive integer
$$m_{ij}:=\sharp\{a\in Q_1|\ s(a)=i, t(a)=j\}+\sharp\{a\in Q_1|\ s(a)=j, t(a)=i\}.$$
The \emph{Coxeter group} associated to $Q$ is defined by the generators $s_1,\ldots , s_n$ and relations
\begin{itemize}
 \item $s_i^2=1$,
\item $s_is_j=s_js_i$ if $m_{ij}=0$,
\item $s_is_js_i=s_js_is_j$ if $m_{ij}=1$.
\end{itemize}
In this paper $\ww$ will denote a word (\emph{i.e.} an expression in the free monoid generated by $s_i, i\in Q_0$), and $w$ will be its equivalence class in the Coxeter group $W_Q$.

An expression $\ww=s_{u_1}\ldots s_{u_l}$ is \emph{reduced} if $l$ is smallest possible. An element $c=s_{u_1}\ldots s_{u_l}$ is called a \emph{Coxeter element} if $l=n$ and $\{u_1,\ldots, u_l\}=\{1,\ldots ,n\}$. We say that a Coxeter element $c=s_{u_1}\ldots s_{u_n}$ is \emph{admissible} with respect to the orientation of $Q$ if $i<j$ when there is an arrow $u_i\rightarrow u_j$.

\medskip 
The preprojective algebra associated to $Q$ is the algebra
$$k\overline{Q}/\langle \sum_{a\in Q_1} (aa^*-a^*a)\rangle$$
where $\overline{Q}$ is the double quiver of $Q$, which is obtained from $Q$ by adding for each arrow 
$a:i\rightarrow j$ in $Q_1$ an arrow $a^*:i\leftarrow j$ pointing in the opposite direction. 
We denote by $\Lambda$ the completion of the preprojective algebra associated to $Q$ and by $\fl\Lambda$ 
the category of right $\Lambda$-modules of finite length. 

The algebra $\Lambda$ is finite-dimensional selfinjective if $Q$ is a Dynkin quiver. Then the stable category $\underline{\mod}\Lambda$ satisfies the \emph{2-Calabi-Yau property} (2-CY for short), that is, there is a functorial isomorphism  $$D\underline{\Hom}_{\Lambda}(X,Y)\simeq \underline{\Hom}_{\Lambda}(Y,X[2]),$$ where $D:=\Hom_k(-,k)$ and $[1]:=\Omega^{-1}$ is the suspension functor \cite{AR96,CB00} (see \cite{GLS07,Rei06} for a complete proof). 

When $Q$ is not Dynkin, then $\Lambda$ is infinite dimensional and of global dimension $2$. In this case the triangulated category $\Dd^b(\fl\Lambda)$ is $2$-CY \cite{CB00,GLS07,Boc08} (see \cite{Y}  for a complete proof).

We now recall some work from \cite{IR, Bua2}.
For each $i=1,\ldots, n$ we have an ideal $I_i:=\Lambda(1-e_i)\Lambda$ in $\Lambda$, where $e_i$ is the idempotent of $\Lambda$ associated with the vertex $i$. We write $I_\ww:= I_{u_l}\ldots I_{u_2}I_{u_1}$ when $\ww=s_{u_1}s_{u_2}\ldots s_{u_l}$ is a expression of $w\in W_Q$. 
We denote by $S_i:=\Lambda/I_i$ the simple bimodule corresponding to the vertex $i$.

We collect the following information which is useful for Section 2:

\begin{prop}\cite{Bua2}\label{basicprop} Let $\Lambda$ be a complete preprojective algebra.

\begin{itemize}
\item[(a)] If $\ww=s_{u_1}\ldots s_{u_l}$ and $\ww'=s_{v_1}\ldots s_{v_l}$ are two reduced expressions of the same element in the Coxeter group, then $I_\ww=I_{\ww'}$. 

\item[(b)] If $\ww=\ww's_i$ with $\ww'$ reduced, then $I_\ww\subseteq I_{\ww'}$. Moreover $\ww$ is reduced if and only if $I_\ww\varsubsetneq I_{\ww'}$.  And for $j\neq i$ we have $e_j I_{\ww}=e_jI_{\ww'}$.

\end{itemize}

If $\Lambda$ is not of Dynkin type we have moreover:
 \begin{itemize}

\item[(c)] Any finite product $I$ of the ideals $I_j$ is a tilting module of projective dimension at most one, and $\End_\Lambda(I)\simeq \Lambda$.

\item[(d)] If $S$ is a simple $\Lambda$-module and $I$ is a tilting module of projective dimension at most one, then $S\ten_\Lambda I=0$ or $\Tor_1^\Lambda(S,I)=0$.

\item[(e)] If $\Tor_1^\Lambda(S_i,I)=0$, then $I_i\lten_\Lambda I=I_i\ten_\Lambda I=I_iI$ for a tilting module $I$ of projective dimension at most one.

\end{itemize}
\end{prop}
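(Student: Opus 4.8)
The plan is to treat the assertions essentially in the order (a), the formal half of (b), (e), (d), (c), and finally the remaining half of (b): (a) and the formal part of (b) are combinatorial, while (c)--(e) are homological --- resting on $\Lambda$ having global dimension $2$ and on the $2$-Calabi--Yau property of $\Dd^b(\fl\Lambda)$ in the non-Dynkin case --- and the deep half of (b) will follow from (c).

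For (a) I would use Matsumoto's theorem: any two reduced expressions of $w$ are linked by moves $s_is_j\leftrightarrow s_js_i$ (if $m_{ij}=0$) and $s_is_js_i\leftrightarrow s_js_is_j$ (if $m_{ij}=1$), while for $m_{ij}\geq 2$ the subgroup $\langle s_i,s_j\rangle$ is infinite dihedral, so no move --- and nothing to check --- arises. Hence it suffices to prove $I_iI_j=I_jI_i$ for $m_{ij}=0$ and $I_iI_jI_i=I_jI_iI_j$ for $m_{ij}=1$ and then multiply these identities through arbitrary products. Both I would verify by a direct computation with the preprojective relation $\sum_a(aa^*-a^*a)$: since any term of such a product that factors through a vertex $\ell\notin\{i,j\}$ already lies in both sides, the check takes place inside the rank-two subquiver of $\overline{Q}$ on $\{i,j\}$, where it is short. (In the non-Dynkin case one could alternatively recognise $-\lten_\Lambda I_i$ as the spherical twist along the $2$-spherical object $S_i$ on $\Dd^b(\fl\Lambda)$ and invoke the Seidel--Thomas braid relations, but passing from a functorial identity back to an equality of bimodules, and the separate treatment needed in the Dynkin case, make the direct check the cleaner uniform route.) This braid computation is one of the two non-formal points.

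For (b): the inclusion is immediate, $I_\ww=I_iI_{\ww'}\subseteq\Lambda I_{\ww'}=I_{\ww'}$, and for $j\neq i$ one has $e_j=e_j(1-e_i)\in I_i$, so $e_jI_i=e_j\Lambda$ and $e_jI_\ww=e_jI_iI_{\ww'}=e_j\Lambda I_{\ww'}=e_jI_{\ww'}$. For the equivalence, observe that $I_i$ is idempotent: $1-e_i=(1-e_i)(1-e_i)\in I_iI_i$, so $I_i\subseteq I_i^2$. Thus if $\ww=\ww's_i$ is not reduced then $\ell(w's_i)<\ell(w')$, so $w'$ has a reduced expression ending in $s_i$, and (a) gives $I_{\ww'}=I_iI_{\ww''}$ for that expression, whence $I_\ww=I_i(I_iI_{\ww''})=I_iI_{\ww''}=I_{\ww'}$. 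The converse --- $\ww$ reduced $\Rightarrow I_\ww\subsetneq I_{\ww'}$, equivalently $S_i\ten_\Lambda I_{\ww'}=I_{\ww'}/I_iI_{\ww'}\neq 0$ --- is the deepest point; it is precisely the statement that $w\mapsto I_w$ faithfully records $W_Q$. I would derive it from (c): since $I_{\ww'}$ is a tilting module of projective dimension $\leq 1$, the class $[\Lambda/I_{\ww'}]$ in $K_0(\fl\Lambda)$ is computed by iterated Coxeter transformations applied to the $[S_i]$, and one checks that $\dim_k\Lambda/I_\ww>\dim_k\Lambda/I_{\ww'}$ whenever $\ell(w)=\ell(w')+1$; in the Dynkin case this follows instead from the classical classification of the finitely many ideals $I_w$.

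For the homological part, (e) is fastest: applying $-\lten_\Lambda I$ to $0\to I_i\to\Lambda\to S_i\to 0$, and using that $I$ has flat dimension $\leq 1$ (so $\Tor_p^\Lambda(S_i,I)=0$ for $p\geq 2$) together with the hypothesis $\Tor_1^\Lambda(S_i,I)=0$, one sees $S_i\lten_\Lambda I$ is concentrated in degree $0$; the homology exact sequence of $I_i\lten_\Lambda I\to I\to S_i\lten_\Lambda I\to$ then puts $I_i\lten_\Lambda I$ in degree $0$ with $H_0=I_i\ten_\Lambda I$, and reading off degree $0$ shows the multiplication $I_i\ten_\Lambda I\to I_iI$ is injective, hence an isomorphism. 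For (d), the adjunction $D\Tor_p^\Lambda(S,-)\simeq\Ext^p_\Lambda(-,DS)$ (legitimate since $S$ admits a finite resolution by finitely generated projectives) turns the dichotomy into the assertion that no indecomposable projective appears in both terms of a minimal projective presentation of $I$, a property one checks for tilting modules of projective dimension $\leq 1$. Finally (c) goes by induction on the number of factors, the base being $I=\Lambda$: for $I=I_iI'$ with $I'$ already tilting of projective dimension $\leq 1$ and $\End_\Lambda(I')\simeq\Lambda$, part (d) gives either $S_i\ten_\Lambda I'=0$, so $I=I'$, or $\Tor_1^\Lambda(S_i,I')=0$, in which case (e) yields $I=I_i\lten_\Lambda I'$ and a short exact sequence $0\to I\to I'\to S_i\ten_\Lambda I'\to 0$ along which projective dimension $\leq 1$, rigidity, the number $n$ of non-isomorphic indecomposable summands, and $\End_\Lambda(I)\simeq\Lambda$ are transported --- for the last point using that $I'\subseteq\Lambda$ has no nonzero finite-length submodule, so $\Hom_\Lambda(S_i\ten_\Lambda I',I')=\Hom_\Lambda(S_i\ten_\Lambda I',I)=0$. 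In sum, the two genuine obstacles are the rank-two braid computation in (a) and the faithfulness implication in (b); everything else is formal or routine homological algebra.
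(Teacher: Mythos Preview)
The paper does not prove this proposition: it is stated with a citation to \cite{Bua2} and no proof environment follows. So there is nothing to compare your proposal against in this paper; the authors simply import the result.

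That said, your sketch is broadly in line with how these facts are established in \cite{Bua2} and \cite{IR}: Matsumoto's theorem plus a direct rank-two verification of $I_iI_j=I_jI_i$ and $I_iI_jI_i=I_jI_iI_j$ for (a); the idempotence of $I_i$ and the easy $e_jI_i=e_j\Lambda$ for the formal parts of (b); and the homological arguments you indicate for (c)--(e). One point to be careful about is the ``deep half'' of (b): your plan to derive strict inclusion from (c) via a $K_0$/dimension-count is the right shape, but the sentence ``$[\Lambda/I_{\ww'}]$ is computed by iterated Coxeter transformations'' hides the actual work, namely that the action of $-\lten_\Lambda I_i$ on $K_0$ is the reflection $s_i$ in the geometric representation and that the length condition forces a strictly positive root to appear --- this is exactly what the present paper records later in Lemma~\ref{dimvectlemma} and Theorem~\ref{realroot}. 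In the Dynkin case your fallback to ``the classical classification of the finitely many ideals $I_w$'' is circular as stated (that classification \emph{is} the content of (a) and (b)); the usual route is to embed $Q$ into a non-Dynkin $\widetilde Q$ as in Lemma~\ref{extend support} of this paper. Apart from that, your outline is sound.
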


By $(a)$ the ideal $I_\ww$ does not depend on the choice of the reduced expression $\ww$ of $w$. Therefore we write $I_w$ for the ideal $I_\ww$ and $\Lambda_w:=\Lambda/I_w$ when $\ww$ is an expression of $w$. This is a finite dimensional algebra. We denote by $\Sub\Lambda_w$ the category of submodules of finite dimensional free $\Lambda_w$-modules. This is a \emph{Frobenius category}, that is, an exact category with enough projectives and injectives, and the projectives and injectives coincide. Its stable category $\underline{\Sub}\Lambda_w$ is a triangulated category which satisfies the 2-Calabi-Yau property \cite{Bua2}. The category $\Sub\Lambda_w$ is then said to be \emph{stably 2-Calabi-Yau}.

Recall that a \emph{cluster-tilting object} in a Frobenius stably 2-CY category $\Cc$ with finite dimensional morphisms spaces is an object $T\in \Cc$ such that \begin{itemize}
\item $\Ext^1_\Cc(T,T)=0$
\item $\Ext^1_\Cc(T,X)=0$ implies that $X\in \add T$.
\end{itemize}

For any reduced word $\ww=s_{u_1}\ldots s_{u_l}$, we write ${\displaystyle M^j_\ww:=e_{u_j}\frac{\Lambda}{I_{u_j}\ldots I_{u_1}}.}$ 

\begin{thma}\cite[Thm III.2.8]{Bua2}\label{birsc}
For any reduced expression $\ww=s_{u_1}\ldots s_{u_l}$ of $w\in W_Q$, the object $M_\ww:=\bigoplus_{j=1}^l M^j_\ww$ 
is a cluster-tilting object in the stably 2-CY category $\Sub\Lambda_w$. 
\end{thma}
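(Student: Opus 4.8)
The plan is to prove this by induction on the length $l$ of the reduced expression $\ww = s_{u_1}\ldots s_{u_l}$, with the inductive step removing the last generator $s_{u_l}$, and to use the mutation theory of cluster-tilting objects in $2$-CY categories together with the inclusion of Frobenius categories $\Sub\Lambda_{w'}\hookrightarrow \Sub\Lambda_w$ induced by the surjection $\Lambda_w\twoheadrightarrow\Lambda_{w'}$, where $w'$ is the element with reduced expression $\ww'=s_{u_1}\ldots s_{u_{l-1}}$. The base case $l=0$ is trivial ($w=e$, $\Lambda_w=0$). The key observation making the induction work is that for $j<l$ the modules $M^j_{\ww}$ and $M^j_{\ww'}$ literally coincide: since $\ww$ and $\ww'$ agree in their first $l-1$ letters, the ideals $I_{u_j}\ldots I_{u_1}$ are the same, so $M^j_\ww = e_{u_j}\Lambda/(I_{u_j}\cdots I_{u_1})$ is unchanged. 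Thus $M_\ww = M_{\ww'}\oplus M^l_\ww$ as $\Lambda$-modules, and $M_{\ww'}$ is a cluster-tilting object in $\Sub\Lambda_{w'}$ by the inductive hypothesis.

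The main steps I would carry out are as follows. First, I would recall (or re-derive from Proposition \ref{basicprop}) that $\Sub\Lambda_{w'}$ is a functorially finite subcategory of $\Sub\Lambda_w$ closed under the relevant operations, and that modules killed by $I_{w'}$ — in particular $M_{\ww'}$ — genuinely live in $\Sub\Lambda_{w'}$. Second, I would verify that $M^l_\ww = e_{u_l}\Lambda/I_w$ lies in $\Sub\Lambda_w$: this uses parts (c)–(e) of Proposition \ref{basicprop}, namely that $I_{w'}$ is a tilting module of projective dimension $\le 1$ with $\End_\Lambda(I_{w'})\simeq\Lambda$, so that $I_{w'}/I_w = I_{w'}/I_{u_l}I_{w'} \simeq (\Lambda/I_{u_l})\ten_\Lambda I_{w'} = S_{u_l}\ten_\Lambda I_{w'}$ is a $\Lambda_w$-module that embeds in a free $\Lambda_w$-module; restricting to the idempotent $e_{u_l}$ identifies $M^l_\ww$ with $e_{u_l}(I_{w'}/I_w)$ — in the non-Dynkin case directly, and in the Dynkin case after passing through the longest-element description. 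Third, and this is the heart of the argument, I would show that passing from the cluster-tilting object $M_{\ww'}\in\Sub\Lambda_{w'}$ to $\Sub\Lambda_w$ and mutating at the (necessarily projective-injective-free) summand indexed by the position where $u_l$ last occurred among $u_1,\dots,u_{l-1}$ — or adjoining $M^l_\ww$ when $u_l$ is a new vertex — produces again a cluster-tilting object, and that this mutation/adjunction yields exactly $M_\ww$. Concretely one computes the exchange triangle and identifies the new complement with $M^l_\ww = e_{u_l}\Lambda/I_w$ using the ideal identities $I_{u_l}I_{u_{l-1}}\cdots I_{u_1}=I_w$ and part (b) of Proposition \ref{basicprop} controlling $e_j I_\ww$ for $j\ne u_l$.

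To check the two cluster-tilting axioms for $M_\ww$: rigidity $\Ext^1_{\Sub\Lambda_w}(M_\ww,M_\ww)=0$ reduces, via the long exact sequences coming from $0\to I_w\to I_{w'}\to I_{w'}/I_w\to 0$ and the $2$-CY duality $D\Ext^1(X,Y)\simeq\Ext^1(Y,X)$, to the already-known rigidity of $M_{\ww'}$ in $\Sub\Lambda_{w'}$ plus vanishing of $\Ext^1$ between $M_{\ww'}$ and $M^l_\ww$ and self-rigidity of $M^l_\ww$, all of which follow from the tilting properties in Proposition \ref{basicprop}(c)–(e) and a dimension count. For maximality — $\Ext^1_{\Sub\Lambda_w}(M_\ww,X)=0\Rightarrow X\in\add M_\ww$ — I would argue that $\Sub\Lambda_w$ and $\Sub\Lambda_{w'}$ have cluster-tilting objects with the same number of indecomposable non-projective summands as $M_\ww$, $M_{\ww'}$ respectively (this counting is standard: the number equals the number of letters, by the theory of \cite{Bua2}), so it suffices to exhibit $M_\ww$ as \emph{some} cluster-tilting object, which the mutation step already does; alternatively, use that in a stably $2$-CY category a rigid object with the maximal possible number of non-isomorphic indecomposable summands is automatically cluster-tilting. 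The step I expect to be the main obstacle is the third one: precisely identifying the module obtained by mutation (or by the reflection-functor-type procedure) at the last step with the explicit factor module $e_{u_l}\Lambda/I_w$, since this requires a careful analysis of the exchange triangle in $\underline{\Sub}\Lambda_w$ and a comparison of torsion/cotorsion data — this is exactly where the delicate homological input of Proposition \ref{basicprop}, parts (d) and (e), is used, and where the Dynkin and non-Dynkin cases must be reconciled.
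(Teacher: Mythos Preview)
The paper does not prove this theorem at all: it is stated with the citation \cite[Thm III.2.8]{Bua2} and used as black-box input for the rest of the paper. So there is no ``paper's own proof'' to compare against; everything you wrote is an attempt to reconstruct a proof that the authors explicitly delegate to \cite{Bua2}.

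That said, your sketch has a structural gap worth naming. Your inductive step assumes that from a cluster-tilting object $M_{\ww'}$ in $\Sub\Lambda_{w'}$ one can produce, by a single mutation or adjunction, a cluster-tilting object in the larger category $\Sub\Lambda_w$. But the mutation theory you invoke produces a new cluster-tilting object \emph{in the same category}; it does not tell you anything about $\Sub\Lambda_w$ from information in $\Sub\Lambda_{w'}$. The inclusion $\Sub\Lambda_{w'}\hookrightarrow\Sub\Lambda_w$ does not send cluster-tilting objects to cluster-tilting objects (indeed $M_{\ww'}$ is not maximal rigid in $\Sub\Lambda_w$), and nothing in your outline bridges that gap. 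Your two proposed fixes for maximality are both circular: the claim that ``the number of indecomposable summands of a cluster-tilting object equals the number of letters, by the theory of \cite{Bua2}'' is precisely the content of the theorem you are trying to prove, and the alternative (``a maximal rigid object with the right number of summands is cluster-tilting'') presupposes that $\Sub\Lambda_w$ has \emph{some} cluster-tilting object, which is again part of what must be established.

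The actual argument in \cite{Bua2} is considerably more involved: it first builds the theory of $\Sub\Lambda_w$ (showing it is extension-closed, Frobenius, stably $2$-CY, and identifying its projective-injectives as the summands of $\Lambda_w$), and then proves cluster-tilting by an approximation argument inside $\Sub\Lambda_w$ itself, not by transferring cluster-tilting across the chain of subcategories. If you want to reconstruct a proof, you will need either to follow that route or to supply a genuine comparison theorem relating cluster-tilting in $\Sub\Lambda_{w'}$ and $\Sub\Lambda_w$; the latter is not obvious and is not what your sketch currently does.
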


For any reduced word $\ww=s_{u_1}\ldots s_{u_l}$, we have the chain of ideals $$\Lambda\supset I_{u_1}\supset I_{u_2}I_{u_1}\supset \ldots \supset I_w,$$ which is strict by Proposition \ref{basicprop} (b). For $j=1,\ldots,l$ we define the \emph{layer}  $$L^j_\ww:=\frac{I_{u_{j-1}}\ldots I_{u_1}}{I_{u_j}\ldots I_{u_1}}.$$

Using Proposition \ref{basicprop} (b) it is immediate to see the following 

\begin{prop}
 We have isomorphisms in $\fl\Lambda$:
$$L^j_\ww\simeq e_{u_j}L^j_\ww\simeq e_{u_j}\frac{I_{u_i}\ldots I_{u_1}}{I_{u_j}\ldots I_{u_1}}\simeq \Ker(\xymatrix{M^j_\ww\ar@{->>}[r] & M^i_\ww}),$$
where $i$ is the greatest integer satisfying $u_i=u_j$ and $i<j$. (If such $i$ does not exist, then we define $M^i_\ww$ to be $0$.)
\end{prop}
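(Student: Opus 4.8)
The plan is to deduce all three identifications from Proposition~\ref{basicprop}(b). The one fact we use repeatedly is that attaching a generator $s_k$ on the right of a reduced word $\ww'$ alters the associated ideal only in its $e_k$-component, i.e.\ $e_mI_{\ww's_k}=e_mI_{\ww'}$ for every $m\neq k$. We combine this with two easy observations: for an idempotent $e=e_{u_j}$ the functor $e(-)$ is exact on $\Lambda$-bimodules (it is already exact on the underlying left modules, since $\Lambda e$ is projective), and the inclusions $I_{u_j}\cdots I_{u_1}\subseteq I_{u_{j-1}}\cdots I_{u_1}\subseteq I_{u_i}\cdots I_{u_1}$ hold, each smaller product being a left ideal and the larger one obtained from it by left multiplication by a subset of $\Lambda$.

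First I would prove $L^j_\ww=e_{u_j}L^j_\ww$. Put $\ww'=s_{u_1}\cdots s_{u_{j-1}}$, which is a prefix of $\ww$ and hence reduced, so that $L^j_\ww=I_{\ww'}/I_{\ww's_{u_j}}$. Proposition~\ref{basicprop}(b) gives $e_mI_{\ww's_{u_j}}=e_mI_{\ww'}$ for every $m\neq u_j$, hence $e_mL^j_\ww=0$, and summing over $m$ (using $1=\sum_m e_m$) gives $L^j_\ww=e_{u_j}L^j_\ww$. Next, let $i$ be the greatest index $<j$ with $u_i=u_j$, so that $u_{i+1},\dots,u_{j-1}$ are all different from $u_j$, and set $\ww^{(m)}=s_{u_1}\cdots s_{u_m}$. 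Applying Proposition~\ref{basicprop}(b) once for each step of the chain $\ww^{(i)},\ww^{(i+1)},\dots,\ww^{(j-1)}$, every step adjoining a letter $\neq u_j$, yields
$$e_{u_j}(I_{u_{j-1}}\cdots I_{u_1})=e_{u_j}(I_{u_i}\cdots I_{u_1}).$$
Since both $e_{u_j}L^j_\ww$ and $e_{u_j}\tfrac{I_{u_i}\cdots I_{u_1}}{I_{u_j}\cdots I_{u_1}}$ have denominator $e_{u_j}(I_{u_j}\cdots I_{u_1})$, this gives the second isomorphism. Finally, the inclusion $I_{u_j}\cdots I_{u_1}\subseteq I_{u_i}\cdots I_{u_1}$ produces a short exact sequence of $\Lambda$-bimodules
$$0\to \frac{I_{u_i}\cdots I_{u_1}}{I_{u_j}\cdots I_{u_1}}\to \frac{\Lambda}{I_{u_j}\cdots I_{u_1}}\to \frac{\Lambda}{I_{u_i}\cdots I_{u_1}}\to 0,$$
and applying the exact functor $e_{u_j}(-)$, together with $u_i=u_j$, identifies it with $0\to e_{u_j}\tfrac{I_{u_i}\cdots I_{u_1}}{I_{u_j}\cdots I_{u_1}}\to M^j_\ww\to M^i_\ww\to 0$, giving the third isomorphism; in particular $L^j_\ww$, being (isomorphic to) a submodule of the finite-dimensional $\Lambda_w$-module $M^j_\ww$, lies in $\fl\Lambda$. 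When no such $i$ exists, one reads $I_{u_i}\cdots I_{u_1}$ as $\Lambda$ and $M^i_\ww$ as $0$, and the same arguments — now iterating (b) from the empty word — go through verbatim.

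Since everything reduces to iterating Proposition~\ref{basicprop}(b), I do not expect a serious obstacle; the only points needing care are verifying that every word occurring is reduced (so that (b) is available), handling the one-sidedness of the ideal products correctly, and recording that $e_{u_j}(-)$ is exact on bimodules.
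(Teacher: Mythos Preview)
Your argument is correct and is exactly the approach the paper intends: it declares the proposition ``immediate'' from Proposition~\ref{basicprop}(b), and you have simply spelled out that iteration carefully. One tiny remark: the exactness of $e_{u_j}(-)$ is perhaps most cleanly justified by the direct-sum decomposition $M=e_{u_j}M\oplus(1-e_{u_j})M$ rather than projectivity of $\Lambda e$, but this is cosmetic.
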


Therefore the layers $L^1_\ww,\ldots,L^l_\ww$ give a filtration of the cluster-tilting object $M_\ww$.

\subsection{Mutation of tilting modules}

Let $Q$ be a finite connected quiver with vertices $\{1,\ldots ,n\}$ and without oriented cycles. 

\begin{dfa}
A basic $kQ$-module $T$ is called a \emph{tilting module} if $\Ext^1_{kQ}(T,T)=0$ and
it has $n$ non-isomorphic indecomposable summands.
\end{dfa}

For each indecomposable summand $T_i$ of $T$, it is known that there is at most one indecomposable $T_i^*\ncong T_i$ such that $(T/T_{i})\oplus T_i^*$ is a tilting module \cite{RS,Ung}, and that there is exactly one if and only if $T/T_i$ is a sincere $kQ$-module \cite{HU}. We then say that $T_i$ (and possible $T_i^*$) is a \emph{complement} for the almost complete tilting module $T/T_i$. The (possible) other complement of $T/T_i$ can be obtained using the following result:

\begin{prop}\cite{RS}\label{tilting mutation}
\begin{itemize}
\item[(a)]
If the minimal left $\add(T/T_i)$-approximation $\xymatrix{T_i\ar[r]^f & B}$ is a monomorphism, then $\Coker f$ is a complement for $T/T_i$.
\item[(b)]
If the minimal right $\add(T/T_i)$-approximation $\xymatrix{B'\ar[r]^g &T_i} $ is an epimorphism, then $\Ker g$ is a complement for $T/T_i$.
\end{itemize}
\end{prop}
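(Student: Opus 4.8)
The plan is to prove (a) directly and obtain (b) by a dual argument (working in the opposite algebra $kQ^{\mathrm{op}}$, or simply repeating the argument with arrows reversed), so I will concentrate on (a). Write $\overline{T}:=T/T_i$ and let $\xymatrix{T_i\ar[r]^f & B}$ be the minimal left $\add\overline{T}$-approximation, assumed to be a monomorphism, with $C:=\Coker f$, so we have a short exact sequence $0\to T_i\xrightarrow{f} B\xrightarrow{} C\to 0$ with $B\in\add\overline{T}$. I must show that $\overline{T}\oplus C$ is a tilting module, which by definition amounts to two things: that it has $n$ non-isomorphic indecomposable summands, and that $\Ext^1_{kQ}(\overline{T}\oplus C,\overline{T}\oplus C)=0$. (Since $kQ$ is hereditary there is no projective-dimension condition to check beyond this, and the number-of-summands condition together with $\Ext^1$-vanishing is the usual characterization of tilting modules for hereditary algebras.)

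The first main step is the $\Ext^1$-vanishing. I would verify the four pieces in turn. $\Ext^1_{kQ}(\overline{T},\overline{T})=0$ is given. For $\Ext^1_{kQ}(\overline{T},C)$: apply $\Hom_{kQ}(\overline{T},-)$ to the sequence $0\to T_i\to B\to C\to 0$; the connecting map gives an exact piece $\Ext^1_{kQ}(\overline{T},B)\to\Ext^1_{kQ}(\overline{T},C)\to\Ext^2_{kQ}(\overline{T},T_i)$, and both ends vanish ($B\in\add\overline{T}$ so the left term is $\Ext^1_{kQ}(\overline{T},\overline{T})^{\oplus}=0$; the right term is $0$ because $\mathrm{gl.dim}\,kQ\le 1$), hence $\Ext^1_{kQ}(\overline{T},C)=0$. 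For $\Ext^1_{kQ}(C,\overline{T})$: apply $\Hom_{kQ}(-,\overline{T})$ to the sequence, getting $\Hom_{kQ}(B,\overline{T})\xrightarrow{f^*}\Hom_{kQ}(T_i,\overline{T})\to\Ext^1_{kQ}(C,\overline{T})\to\Ext^1_{kQ}(B,\overline{T})$; the last term is again a summand of $\Ext^1_{kQ}(\overline{T},\overline{T})=0$, so it suffices to know that $f^*$ is surjective — but that is exactly the statement that $f$ is a left $\add\overline{T}$-approximation, i.e. every map $T_i\to\overline{T}$ factors through $f:T_i\to B$. Finally $\Ext^1_{kQ}(C,C)$: apply $\Hom_{kQ}(C,-)$ to the sequence to get $\Ext^1_{kQ}(C,B)\to\Ext^1_{kQ}(C,C)\to\Ext^2_{kQ}(C,T_i)=0$, and $\Ext^1_{kQ}(C,B)=0$ because it is a summand of $\Ext^1_{kQ}(C,\overline{T})=0$ which we just established. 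So $\overline{T}\oplus C$ is rigid.

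The second main step is to count indecomposable summands. We know $\overline{T}$ has $n-1$ non-isomorphic indecomposable summands (it is the almost complete tilting module $T/T_i$). It remains to show that $C$ has an indecomposable summand not lying in $\add\overline{T}$; then $\overline{T}\oplus C$ contains at least $n$ non-isomorphic indecomposables, and a rigid $kQ$-module has at most $n$ non-isomorphic indecomposable summands (standard: the classes $[\![T_j]\!]$ of the indecomposable summands are linearly independent in $K_0$, e.g.\ via $\langle-,-\rangle$), so after discarding multiplicities we land on exactly a tilting module and $C$ (up to removing repeated summands) supplies precisely the missing complement. To see $C\notin\add\overline{T}$: if it were, the sequence $0\to T_i\to B\to C\to 0$ would be a sequence of modules in $\add\overline{T}$ with $\Ext^1_{kQ}(C,T_i)$ possibly nonzero; more cleanly, minimality of $f$ means $f$ has no direct summand of the form $\xymatrix{0\ar[r]&B'}$, and if $C\in\add\overline{T}$ then the sequence splits as a whole only if $f$ is split mono, which (being minimal left approximation) would force $T_i\in\add\overline{T}$, contradicting that $T_i$ is not a summand of $T/T_i$; and if it does not split then $\Ext^1_{kQ}(C,T_i)\neq0$, but $C\in\add\overline{T}$, $T_i\mid T$ would then contradict $\Ext^1_{kQ}(T,T)=0$. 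Either way we get a contradiction, so $C$ has an indecomposable summand outside $\add\overline{T}$, and in fact exactly one up to multiplicity, namely $T_i^*$.

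The step I expect to be the main obstacle is this last bookkeeping — pinning down that $C$ contributes \emph{exactly one} new indecomposable (rather than several), so that $\overline{T}\oplus C$ is genuinely basic-equivalent to a tilting module and the new complement is unambiguous. The rigidity computation in the first step is essentially formal given $\mathrm{gl.dim}\,kQ\le 1$ and the approximation property; the subtlety is entirely in the additive/$K_0$-theoretic count, where one uses that a rigid module over a hereditary algebra has at most $n$ non-isomorphic indecomposable summands and that $\overline{T}$ already accounts for $n-1$ of them. Part (b) then follows by the evident dualization: a minimal right $\add\overline{T}$-approximation $\xymatrix{B'\ar[r]^g&T_i}$ that is epi gives $0\to\Ker g\to B'\xrightarrow{g} T_i\to 0$, and the same four $\Ext^1$-vanishings and the same summand count go through with the roles of $\Hom$-into and $\Hom$-out-of interchanged.
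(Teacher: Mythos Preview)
The paper does not prove this proposition; it is stated as background with a citation to \cite{RS} and no argument is given. So there is nothing to compare against, and I simply assess your proof.

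Your four $\Ext^1$-vanishings are correct and cleanly argued: they use only the hereditary hypothesis and the defining property of a left approximation. The argument that $C\notin\add\overline{T}$ is also fine (if $C\in\add\overline{T}$ then $\Ext^1_{kQ}(C,T_i)=0$, so the sequence splits and $T_i\in\add\overline{T}$, a contradiction).

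The gap is exactly where you suspected, but not quite for the reason you state. You have shown that the basic part of $\overline{T}\oplus C$ is a tilting module $\overline{T}\oplus T_i^*$; what is missing is that $C$ \emph{itself} is indecomposable, i.e.\ $C\simeq T_i^*$. Your phrase ``up to removing repeated summands'' leaves open the possibility $C\simeq (T_i^*)^{m}\oplus X$ with $X\in\add\overline{T}$, and then $\Coker f$ would not literally be a complement in the sense defined just above the proposition. Two short additional steps close this. First, left minimality kills $X$: if $X\neq0$ then, since $\Ext^1_{kQ}(X,T_i)=0$, the inclusion $\iota_X:X\hookrightarrow C$ lifts to $s:X\to B$, and the endomorphism $\alpha:=1_B-s\,\pi_X\,p$ of $B$ satisfies $\alpha f=f$ (because $pf=0$) but is not an automorphism (because $p\alpha=(1_C-\iota_X\pi_X)p$ is not surjective), contradicting left minimality of $f$. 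Second, $m=1$: applying $\Hom_{kQ}(-,T_i)$ to the sequence and using that the exceptional module $T_i$ satisfies $\End_{kQ}(T_i)=k$ over our algebraically closed field, one gets $\Ext^1_{kQ}(C,T_i)\simeq k$ (the map $\Hom_{kQ}(B,T_i)\to\End_{kQ}(T_i)$ is zero, else $f$ would be split mono); hence $m\cdot\dim\Ext^1_{kQ}(T_i^*,T_i)=1$ and $m=1$. With these two additions your proof of (a) is complete, and (b) follows by the dualisation you describe.
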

There is a one-one correspondence between tilting modules $T$ and contravariantly finite torsionfree classes $\Ff=\Sub T$ containing the projective modules.

\subsection{Reflections and reflection functors}

Let $Q$ be finite quiver with vertices $\{1,\ldots ,n\}$ and without oriented cycles. Let $i\in Q_0$ be a source. 
Then the quiver $Q':=\mu_i(Q)$ is obtained by replacing all arrows starting at the vertex $i$ by arrows in the opposite direction. 

Write $kQ=P_1\oplus\cdots\oplus P_n$ where $P_j$ is the indecomposable projective $kQ$-module associated with the vertex $j$. Then using results of \cite{BGP} and \cite{APR} we have functors:
$$\xymatrix{\mod kQ\ar@<1mm>[rr]^{R_i} && \ar@<1mm>[ll]^{R_i^-}\mod kQ'}$$
where $R_i:=\Hom_{kQ}(M,-)$, $R_i^-:=-\ten_{kQ'}M$ and $M:=\tau^-P_i\oplus kQ/P_i$ which induce inverse equivalences (recall that in this paper we work with right modules) $$\xymatrix{(\mod kQ)/[e_ikQ]\ar@<1mm>[rr]^{R_i} && \ar@<1mm>[ll]^{R_i^-}(\mod kQ')/[e_i DkQ']},$$ where $\mod kQ/[e_ikQ]$ (resp. $\mod kQ'/[e_i DkQ']$) is obtained from the module category $\mod kQ$ (resp. $\mod kQ'$) by annihilating morphisms factoring through $P_i=e_ikQ$ (resp. $e_iDkQ'$). Since $i$ is a source (resp. a sink) of $Q$ (resp. $Q'$) we can regard the category $\mod kQ/[e_ikQ]$ (resp. $\mod kQ'/[e_iDkQ']$) as a full subcategory of $\mod kQ$ (resp. $\mod kQ'$).

When the vertex $i$ is not a sink or source, a reflection is still defined on the level of the Grothendieck group $K_0(\mod kQ)$. The Grothendieck group is constructed as the group with generators $[X]$ for $X\in \mod kQ$ and relations $[X]+[Z]=[Y]$ if there is a short exact sequence $\xymatrix{ X\ar@{>->}[r] & Y\ar@{->>}[r] & Z}.$ 
This is a free abelian group with basis $\{[S_1],\ldots[S_n]\}$, where $S_1,\ldots, S_n$ are the simple $kQ$-modules. With respect to this basis we define 
$$ R_i([S_j])= [S_j]+(m_{ij}-2\delta_{ij})[S_i],$$ 
where $m_{ij}$ is the number of edges of the underlying graph of $Q$ as before.

This definition is coherent with the previous one. Indeed if $i$ is a source and $M$ is an indecomposable  $kQ$-module which is not isomorphic to $P_i$, then we have $$R_i([M])=[R_i(M)].$$
 
\section{Layers associated with reduced words}

Throughout this section let $w$ be an element in the Coxeter group of an acyclic quiver $Q$, and fix $\ww=s_{u_1}\ldots s_{u_l}$ a reduced expression of $w$. For $j=1, \dots, l$ we have defined in Section 1 the layer $L^j_\ww$ as the quotient $$L^j_\ww:=\frac{I_{u_{j-1}}\ldots I_{u_1}}{I_{u_j}\ldots I_{u_1}}.$$

In this section, we investigate some main properties of these layers. We show that each layer can be seen as the image of a simple $\Lambda$-module under an autoequivalence of $\Dd^b(\fl \Lambda)$. Hence they are rigid indecomposable $\Lambda$-modules of finite length, and we compute explicitly their dimension vectors and show that they are real roots. Hence to each layer we can associate a unique indecomposable $kQ$-module with the same dimension vector \cite{Kac80}, but which is not necessarily rigid. 

Note that some of the results of this section have been proven independently in \cite{GLS} but with different proofs.

\subsection{Layers are simples up to autoequivalences}

The following easy observation is useful.

\begin{lema}\label{extend support}
Let $\widetilde{Q}$ be an acyclic quiver and $Q$ be a full subquiver of $\widetilde{Q}$. For any reduced expression $\ww$ of $w\in W_Q$, the module $M^j_\ww$ (respectively, $L^j_\ww$) for $\widetilde{Q}$ is the same as $M^j_\ww$ (respectively, $L^j_\ww$) for $Q$.
\end{lema}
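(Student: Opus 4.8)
The plan is to reduce everything to the behaviour of the idempotent-generated ideals $I_i$ under the embedding of preprojective algebras coming from a full subquiver inclusion $Q\hookrightarrow\widetilde Q$. Let $\Lambda$ and $\widetilde\Lambda$ be the complete preprojective algebras of $Q$ and $\widetilde Q$, and let $e=\sum_{i\in Q_0}e_i\in\widetilde\Lambda$ be the idempotent supported on the vertices of $Q$. Since $Q$ is a \emph{full} subquiver, no arrows of $\widetilde Q$ have exactly one endpoint in $Q_0$, so $e\widetilde\Lambda e$ is (isomorphic to) $\Lambda$: the double quiver of $\widetilde Q$ restricted to $Q_0$ is exactly the double quiver of $Q$, and the preprojective relations at vertices of $Q_0$ only involve arrows between vertices of $Q_0$. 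First I would record this identification $\Lambda\simeq e\widetilde\Lambda e$ carefully, together with the complementary fact that $\widetilde\Lambda e\widetilde\Lambda$ contains all $e_i$ with $i\in Q_0$, and in particular $e_{u_j}\widetilde\Lambda=e_{u_j}\widetilde\Lambda e$ whenever $u_j\in Q_0$, because any path in $\widetilde{\overline Q}$ starting at a vertex of $Q_0$ can only leave $Q_0$ through an arrow $a$ or $a^*$ with $t(a)\notin Q_0$, which cannot happen by fullness — so such a path stays inside $Q_0$.

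Next I would analyse the ideals. For $i\in Q_0$, write $\widetilde I_i=\widetilde\Lambda(1-e_i)\widetilde\Lambda$ for the ideal in $\widetilde\Lambda$ and $I_i=\Lambda(1-e_i)\Lambda$ for the one in $\Lambda$. The key claim is that for any word $\ww=s_{u_1}\cdots s_{u_l}$ with all $u_j\in Q_0$ one has $e\,\widetilde I_{u_j}\cdots \widetilde I_{u_1} = I_{u_j}\cdots I_{u_1}$ as subsets of $e\widetilde\Lambda = \Lambda$ (note $e\widetilde\Lambda e=\Lambda$ but we also need $e\widetilde\Lambda e_{u_r}=e_{u_r}'$-components to match, which follows from the path argument above). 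I would prove this by induction on $l$: the point is that left-multiplying by $e$ and then cutting by $\widetilde I_{u_j}$ is compatible with the corresponding operation inside $\Lambda$, using that $e\widetilde\Lambda\widetilde I_{u_j} = e\widetilde\Lambda(1-e_{u_j})\widetilde\Lambda$ and that all the relevant idempotents and their complements lie in $e\widetilde\Lambda e$. Granting this, $M^j_\ww$ for $\widetilde Q$ is $e_{u_j}\widetilde\Lambda/(\widetilde I_{u_j}\cdots\widetilde I_{u_1})$, and applying $e$ on the left (which is harmless since $e_{u_j}=ee_{u_j}$) identifies it with $e_{u_j}\Lambda/(I_{u_j}\cdots I_{u_1})$, i.e. $M^j_\ww$ for $Q$; similarly the layer $L^j_\ww$, being the subquotient $e_{u_j}(I_{u_{j-1}}\cdots I_{u_1})/(I_{u_j}\cdots I_{u_1})$ by the Proposition just before the lemma, is preserved. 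One must also check $\ww$ is still reduced for $W_{\widetilde Q}$ iff it is reduced for $W_Q$, which is standard (the parabolic subgroup $W_Q\le W_{\widetilde Q}$ is a reflection subgroup and the length function restricts), or can be seen from Proposition \ref{basicprop}(b) via the strictness of the ideal chain, which we have just shown is preserved.

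The main obstacle, and the only place real care is needed, is the identification $e\widetilde\Lambda e\simeq\Lambda$ together with the "paths don't escape $Q_0$" statement: one must make sure the completed preprojective relation behaves well, i.e. that $e(\sum_{a}(aa^*-a^*a))e$ in $\widetilde\Lambda$ corresponds exactly to the defining relation of $\Lambda$ with no extra terms, which again is fullness of $Q$ in $\widetilde Q$. Everything else is a routine induction once that dictionary is in place. I would present the proof as: (1) set up $e$ and prove $e\widetilde\Lambda e\simeq\Lambda$ and $e_{u_j}\widetilde\Lambda=e_{u_j}\widetilde\Lambda e$ for $u_j\in Q_0$; (2) prove $e(\widetilde I_{u_j}\cdots\widetilde I_{u_1})=I_{u_j}\cdots I_{u_1}$ by induction; (3) conclude the claimed equalities of $M^j_\ww$ and $L^j_\ww$, noting this also shows reducedness is unchanged.
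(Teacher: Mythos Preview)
Your argument rests on the claim that ``since $Q$ is a full subquiver, no arrows of $\widetilde Q$ have exactly one endpoint in $Q_0$'', from which you conclude that paths starting in $Q_0$ cannot leave $Q_0$. This is false: a full (i.e.\ induced) subquiver on vertex set $Q_0$ only requires that every arrow of $\widetilde Q$ with \emph{both} endpoints in $Q_0$ belong to $Q$; it places no restriction on arrows between $Q_0$ and $\widetilde Q_0\setminus Q_0$. For instance, if $\widetilde Q$ is $1\to 2\to 3$ and $Q_0=\{1,2\}$, then $Q=(1\to 2)$ is a full subquiver, yet the arrow $2\to 3$ has exactly one endpoint in $Q_0$. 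In the double quiver there is then a path $2\to 3\to 2$, so $e\widetilde\Lambda e$ is strictly larger than $\Lambda$ and the identification $\Lambda\simeq e\widetilde\Lambda e$ on which your entire approach is built fails. This is not a peripheral case: the lemma is applied in the paper precisely to embed a Dynkin quiver into a larger non-Dynkin quiver, where such boundary arrows are unavoidable.

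The remedy, which is what the paper does, is to use the \emph{complementary} idempotent $e:=\sum_{i\in\widetilde Q_0\setminus Q_0}e_i$ and realise $\Lambda$ as the \emph{quotient} $\widetilde\Lambda/\widetilde\Lambda e\widetilde\Lambda$ rather than as a corner. Then for $i\in Q_0$ one has $\widetilde\Lambda e\widetilde\Lambda\subseteq\widetilde I_i$ and $I_i=\widetilde I_i/\widetilde\Lambda e\widetilde\Lambda$; since $e=e^j\in\widetilde I_{u_j}\cdots\widetilde I_{u_1}$ for every $j$, the ideal $\widetilde\Lambda e\widetilde\Lambda$ sits inside every product $\widetilde I_{u_j}\cdots\widetilde I_{u_1}$, and these products pass to the quotient to give exactly $I_{u_j}\cdots I_{u_1}$. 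The equalities for $M^j_\ww$ and $L^j_\ww$ are then immediate.
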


\begin{proof}
Let $\Lambda:=\Lambda_Q$ and $\widetilde{\Lambda}:=\Lambda_{\widetilde{Q}}$ be the corresponding complete preprojective algebras. Let $$e:=\sum_{i\in\widetilde{Q}_0\backslash Q_0}e_i.$$ Then we have $\Lambda=\widetilde{\Lambda}/\widetilde{\Lambda} e \widetilde{\Lambda}$ and $$I_i=\frac{\widetilde{\Lambda}(1-e_i)\widetilde{\Lambda}}{\widetilde{\Lambda} e\widetilde{\Lambda}}$$
for any $i\in Q_0$. Thus the assertions follow.
\end{proof}

\begin{prop}\label{layerimagesimple}
 Let $Q$ be an acyclic quiver and $\Lambda$ the complete preprojective algebra. 
Let $\ww=s_{u_1}\ldots s_{u_l}$ be a reduced expression.
\begin{enumerate}
\item For $j=1,\ldots, l$ 
 we have isomorphisms of $\Lambda$-modules: 
$$L^j_\ww\simeq S_{u_j}\ten_\Lambda (I_{u_{j-1}}\ldots I_{u_1})\simeq S_{u_j}\ten_{\Lambda}I_{u_{j-1}}\ten_\Lambda \cdots \ten_\Lambda I_{u_1}.$$ 
\item If $Q$ is non-Dynkin, then for $j=1,\ldots, l$ 
 we have isomorphisms in $\Dd(\Mod \Lambda)$: 
$$L^j_\ww\simeq S_{u_j}\lten_\Lambda (I_{u_{j-1}}\ldots I_{u_1})\simeq S_{u_j}\lten_{\Lambda}I_{u_{j-1}}\lten_\Lambda \cdots \lten_\Lambda I_{u_1}.$$
\end{enumerate}
\end{prop}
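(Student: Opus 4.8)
The plan is to prove part (1) and deduce part (2), with the induction on $l$ being the organizing principle. The case $l=1$ is the assertion that $L^1_\ww=\Lambda/I_{u_1}=S_{u_1}=S_{u_1}\ten_\Lambda\Lambda$, which is immediate. For the inductive step, write $J:=I_{u_{j-1}}\cdots I_{u_1}$ and $J':=I_{u_j}J$, so that by definition $L^j_\ww=J/J'$. The key observation is that right-tensoring the short exact sequence $I_{u_j}\hookrightarrow\Lambda\twoheadrightarrow S_{u_j}$ with the right $\Lambda$-module $J$ and using right-exactness of $-\ten_\Lambda J$ gives an exact sequence $I_{u_j}\ten_\Lambda J\to J\to S_{u_j}\ten_\Lambda J\to 0$. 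The image of $I_{u_j}\ten_\Lambda J\to\Lambda\ten_\Lambda J=J$ is exactly $I_{u_j}J=J'$ (this is where the product structure of the ideals enters), so one reads off $S_{u_j}\ten_\Lambda J\simeq J/J'=L^j_\ww$. This establishes the first isomorphism in (1).

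For the second isomorphism in (1), the goal is to replace $S_{u_j}\ten_\Lambda(I_{u_{j-1}}\cdots I_{u_1})$ by the iterated tensor $S_{u_j}\ten_\Lambda I_{u_{j-1}}\ten_\Lambda\cdots\ten_\Lambda I_{u_1}$. Here I would invoke Proposition \ref{basicprop}(c) in the non-Dynkin case — each $I_{u_i}$ and each partial product is a tilting module of projective dimension $\le1$ — together with part (e): since the relevant $\Tor_1^\Lambda(S,-)$ vanishes along the way (Proposition \ref{basicprop}(d)–(e)), the tensor products $I_{u_i}\ten_\Lambda(I_{u_{i-1}}\cdots I_{u_1})$ agree with the actual ideal products $I_{u_i}\cdots I_{u_1}$, so the associativity of $\ten_\Lambda$ lets one regroup freely. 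In the Dynkin case the iterated-tensor identity in (1) should be handled by the same kind of bookkeeping, noting that $S_{u_j}\ten_\Lambda-$ only sees the top of each module, or simply by reducing to the non-Dynkin case via Lemma \ref{extend support}: embed $Q$ as a full subquiver of a non-Dynkin acyclic $\widetilde Q$, compute there, and restrict back.

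Part (2) is then the derived refinement of the same argument. Since $Q$ is non-Dynkin, $\Lambda$ has global dimension $2$ and every finite product of the $I_j$ is a tilting module of projective dimension $\le1$ with endomorphism ring $\Lambda$ (Proposition \ref{basicprop}(c)), so derived tensoring with such a module is an autoequivalence of $\Dd(\Mod\Lambda)$. The point is that $S_{u_j}\lten_\Lambda J\simeq S_{u_j}\ten_\Lambda J$ with no higher $\Tor$: this is exactly Proposition \ref{basicprop}(d), which forces $\Tor_1^\Lambda(S_{u_j},J)=0$ (the alternative $S_{u_j}\ten_\Lambda J=0$ is excluded because we already identified this tensor with the nonzero module $L^j_\ww$), and $\Tor_{\ge2}$ vanishes by $\per$dimension $\le1$ of $J$. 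Combined with part (e), which gives $I_{u_i}\lten_\Lambda(I_{u_{i-1}}\cdots I_{u_1})=I_{u_i}\cdots I_{u_1}$ at each stage, the associativity of $\lten_\Lambda$ yields both displayed isomorphisms in (2), and reducing mod the zeroth homology recovers (1).

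The main obstacle I anticipate is the careful verification that $\Tor_1^\Lambda(S_{u_j},I_{u_{j-1}}\cdots I_{u_1})=0$ and that the intermediate $\Tor$-vanishing hypotheses needed to invoke Proposition \ref{basicprop}(e) actually hold at every step — i.e. checking that one is always in the "$\Tor_1=0$" branch of the dichotomy in (d) rather than the "$\ten=0$" branch. This is really a statement about $e_{u_j}I_{u_{j-1}}\cdots I_{u_1}\ne 0$, equivalently that $u_j$ lies in the support of the relevant partial product, which should follow from the strictness of the ideal chain (Proposition \ref{basicprop}(b)); making this precise and threading it through the induction is the delicate part, while everything else is formal manipulation of right-exact and derived tensor functors.
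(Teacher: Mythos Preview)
Your proposal is correct and follows essentially the same route as the paper. The paper argues the non-Dynkin case first: it uses Proposition~\ref{basicprop}(e) iteratively (with the $\Tor_1$-vanishing coming from (d) and the strict inclusion $I_{u_j}I_{w''}\varsubsetneq I_{w''}$ supplied by (b), exactly as you anticipated) to identify the iterated tensor with the ideal product, and then computes $S_{u_j}\ten_\Lambda I_{w''}\simeq I_{w''}/I_{u_j}I_{w''}=L^j_\ww$ by the same right-exactness step you wrote down; the Dynkin case is then reduced to the non-Dynkin one via Lemma~\ref{extend support}, again as you suggested. One cosmetic remark: your phrase ``induction on $l$'' is slightly off --- neither your argument nor the paper's actually uses an inductive hypothesis on the length of $\ww$; the computation is direct for each fixed $j$, with the only genuine induction being the repeated application of Proposition~\ref{basicprop}(e) to peel off one ideal at a time in the iterated tensor.
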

\begin{proof}
We divide the proof into two cases, according to whether $\Lambda$ is of non-Dynkin type or of Dynkin type.

\medskip
\noindent
\textit{non-Dynkin case}:
We set $\ww':=s_{u_1}\ldots s_{u_j}$ and $\ww'':=s_{u_1}\ldots s_{u_{j-1}}$. Since $\ww''$ is reduced, by Proposition \ref{basicprop}(e) we have
$$I_{w''}\simeq I_{u_{j-1}}\ten_\Lambda\ldots\ten_\Lambda I_{u_1}\simeq I_{u_{j-1}}\lten_\Lambda\ldots\lten_\Lambda I_{u_1},$$
and hence we get the second isomorphism.

Since $\ww'=\ww''s_{u_j}$ is reduced, we have $I_{w'}=I_{u_j}I_{w''}\varsubsetneq I_{w''}$, and therefore $\Tor^\Lambda_1(S_{u_j},I_{w''})=0$ by Proposition \ref{basicprop} (d). Thus we have 
$$S_{u_j}\lten_\Lambda I_{w''}\simeq S_{u_j}\ten_\Lambda I_{w''}\simeq \frac{\Lambda}{I_{u_j}}\ten_\Lambda I_{w''}\simeq \frac{I_{w''}}{I_{u_j}I_{w''}}=L^j_\ww.$$

\medskip
\noindent
\textit{Dynkin case}:

We take a non-Dynkin quiver $\widetilde{Q}$ containing $Q$ as a full subquiver. Let $\widetilde{\Lambda}$ be the complete preprojective algebra of $\widetilde{Q}$ and $\widetilde{I}_i:=\widetilde{\Lambda}(1-e_i)\widetilde{\Lambda}$ for $i\in\widetilde{Q}_0$. Using the non-Dynkin case and Lemma \ref{extend support}, we have
$$L^j_\ww\simeq S_{u_j}\ten_{\widetilde{\Lambda}} (\widetilde{I}_{u_{j-1}}\ldots \widetilde{I}_{u_1})\simeq S_{u_j}\ten_{\widetilde{\Lambda}}\widetilde{I}_{u_{j-1}}\ten_{\widetilde{\Lambda}} \cdots \ten_{\widetilde{\Lambda}}\widetilde{I}_{u_1}.$$
For the idempotent $e:=\sum_{i\in\widetilde{Q}_0\backslash Q_0}e_i$, the twosided ideal $\widetilde{\Lambda}e\widetilde{\Lambda}$ annihilates $S_{u_j}$. Since $I_i=\widetilde{I}_i/\widetilde{\Lambda}e\widetilde{\Lambda}$ holds, we have
$$S_{u_j}\ten_{\widetilde{\Lambda}} (\widetilde{I}_{u_{j-1}}\ldots \widetilde{I}_{u_1})\simeq S_{u_j}\ten_\Lambda (I_{u_{j-1}}\ldots I_{u_1})$$
and
$$S_{u_j}\ten_{\widetilde{\Lambda}}\widetilde{I}_{u_{j-1}}\ten_{\widetilde{\Lambda}} \cdots \ten_{\widetilde{\Lambda}}\widetilde{I}_{u_1}\simeq S_{u_j}\ten_{\Lambda}I_{u_{j-1}}\ten_\Lambda \cdots \ten_\Lambda I_{u_1}.$$
Thus the assertion follows.

\end{proof}

Immediately we have the following result, which implies that $L^j_\ww$ is an indecomposable rigid $\Lambda$-module of finite length.
\begin{thma}\label{2spherical}
For $j=1,\ldots, l$ we have
\begin{itemize}
\item if $\Lambda$ is of non-Dynkin type:  
\[\dim\Ext^i_\Lambda(L_\ww^j,L_\ww^j)=\left\{\begin{array}{cc}
1&i=0,2,\\
0&\mbox{otherwise.}
\end{array}\right.\]
\item 
if $\Lambda$ is of Dynkin type:
\[\dim\Ext^i_\Lambda(L_\ww^j,L_\ww^j)=\left\{\begin{array}{cc}
1&i=0,2\ \ (\mod\ 6),\\
0& i=1\ \ (\mod\ 6).
\end{array}\right.\]
\end{itemize}
\end{thma}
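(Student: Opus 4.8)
The plan is to deduce Theorem \ref{2spherical} from Proposition \ref{layerimagesimple} by exploiting the fact that each layer $L^j_\ww$ is obtained from the simple module $S_{u_j}$ by applying a composite of derived tensor functors $-\lten_\Lambda I_{u_k}$, each of which is an autoequivalence of the derived category. First I would recall why the functor $-\lten_\Lambda I_i\colon \Dd(\Mod\Lambda)\to\Dd(\Mod\Lambda)$ is an autoequivalence in the non-Dynkin case: by Proposition \ref{basicprop}(c), $I_i$ is a tilting $\Lambda$-module of projective dimension at most one with $\End_\Lambda(I_i)\simeq\Lambda$, and a standard tilting-theoretic argument (Brenner--Butler / Happel) shows that the total left derived functor $-\lten_\Lambda I_i$ is then an equivalence of derived categories, with quasi-inverse $\mathbf{R}\Hom_\Lambda(I_i,-)$. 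Composing these, the functor $F:=-\lten_\Lambda I_{u_1}\circ\cdots$ applied appropriately, together with Proposition \ref{layerimagesimple}(2), gives $L^j_\ww\simeq F(S_{u_j})$ in $\Dd(\Mod\Lambda)$ for a suitable autoequivalence $F$ of $\Dd(\fl\Lambda)$ (one checks the functors preserve finite-length modules, or restrict the argument to $\Dd^b(\fl\Lambda)$). Since autoequivalences preserve $\Ext$-groups,
\[
\Ext^i_\Lambda(L^j_\ww,L^j_\ww)\simeq\Hom_{\Dd(\fl\Lambda)}(S_{u_j},S_{u_j}[i])=\Ext^i_\Lambda(S_{u_j},S_{u_j}).
\]
Thus the theorem reduces to computing self-extensions of a simple $\Lambda$-module.

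The second step is then the computation of $\Ext^i_\Lambda(S_{u_j},S_{u_j})$. In the non-Dynkin case $\Lambda$ has global dimension $2$, $\Dd^b(\fl\Lambda)$ is $2$-Calabi--Yau, and the simple module $S_m$ has $\Ext^0_\Lambda(S_m,S_m)=k$ (by Schur, $k$ algebraically closed), $\Ext^1_\Lambda(S_m,S_m)=0$ (since $\overline{Q}$ has no loops at $m$, as $Q$ is acyclic; the arrows $a,a^*$ at $m$ go between distinct vertices), and $\Ext^2_\Lambda(S_m,S_m)\simeq D\Ext^0_\Lambda(S_m,S_m)=k$ by the $2$-CY duality; all higher Ext vanish by global dimension. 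This yields the stated $1,0,1$ pattern. In the Dynkin case, $\Lambda$ is finite-dimensional selfinjective and $\underline{\mod}\Lambda$ is $2$-CY, so the minimal projective resolution of $S_m$ is periodic; more precisely one uses that the syzygy functor $\Omega$ acts on simples and, combined with the $2$-CY property of the stable category, gives $\underline{\Ext}^i_\Lambda(S_m,S_m)$ periodic of period $6$ with values $k,0,k$ repeating (this is the well-known $6$-periodicity of preprojective algebras of Dynkin type; it can be cited or obtained from the explicit $\Omega^3S_m\simeq S_{\nu(m)}$-type formula, $\nu$ the Nakayama permutation, together with $D\underline{\Hom}(X,Y)\simeq\underline{\Hom}(Y,\Omega^{-2}X)$). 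Since $S_m$ has no projective summand, $\Ext^i$ and $\underline{\Ext}^i$ agree for $i\ge 1$, and $\Ext^0$ is again $k$.

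I would organize the write-up as: (i) state and prove the autoequivalence claim for $-\lten_\Lambda I_i$ in the non-Dynkin case and assemble the composite, invoking Proposition \ref{layerimagesimple}(2); (ii) reduce via this autoequivalence to $\Ext^*_\Lambda(S_{u_j},S_{u_j})$; (iii) carry out the two Ext computations using $\mathrm{gl.dim}\,\Lambda=2$ plus $2$-CY in the non-Dynkin case, and $6$-periodicity of simple $\Lambda$-modules plus $2$-CY of $\underline{\mod}\Lambda$ in the Dynkin case; (iv) for the Dynkin case, note Lemma \ref{extend support} and Proposition \ref{layerimagesimple} already handle the module-theoretic identification, so only the periodicity input is needed. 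The main obstacle I anticipate is justifying cleanly that the derived tensor functors $-\lten_\Lambda I_i$ are honest autoequivalences \emph{of $\Dd^b(\fl\Lambda)$} (not merely of unbounded derived categories of all modules) in the complete, possibly infinite-dimensional setting, so that the $\Ext$-transfer is legitimate; the alternative, and perhaps cleaner, route is to argue directly that $-\ten_\Lambda I_i$ together with $\Tor_1^\Lambda(-,I_i)$ (which vanishes on the relevant modules by Proposition \ref{basicprop}(d)) already transports $\Ext$-groups as needed, so that one never leaves the abelian category $\fl\Lambda$ and only uses Proposition \ref{layerimagesimple}(1). The Dynkin periodicity statement is classical but one must be careful about the indexing modulo $6$ and the fact that $\Ext^i(S,S)$ can be nonzero only for $i\equiv 0,2\pmod 6$ among $i\ge 1$, with the $i\equiv 1$ case explicitly zero as asserted.
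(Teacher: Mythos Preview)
Your non-Dynkin argument coincides with the paper's: both use Proposition~\ref{basicprop}(c) to see that $-\lten_\Lambda I_{w''}$ is a derived autoequivalence, invoke Proposition~\ref{layerimagesimple}(2) to identify $L^j_\ww$ with the image of $S_{u_j}$, and then compute $\Ext^\bullet_\Lambda(S_{u_j},S_{u_j})$ from $\mathrm{gl.dim}\,\Lambda=2$, the absence of loops, and the $2$-CY property.

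The Dynkin case, however, has a genuine gap. Your reduction $\Ext^i_\Lambda(L^j_\ww,L^j_\ww)\simeq\Ext^i_\Lambda(S_{u_j},S_{u_j})$ rests on $-\lten_\Lambda I_i$ being an autoequivalence, but Proposition~\ref{basicprop}(c),(d),(e) are stated and valid only in the non-Dynkin case: when $\Lambda$ is Dynkin it is selfinjective of infinite global dimension, the ideal $I_i$ is \emph{not} a tilting module, and neither the derived autoequivalence nor your alternative ``$\Tor_1$ vanishes by (d)'' route is available over $\Lambda$. Proposition~\ref{layerimagesimple}(1) gives only an ordinary tensor-product description of $L^j_\ww$, which does not let you transport Ext-groups. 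So your step~(iii) for the Dynkin case, which computes $\Ext^\bullet_\Lambda(S_m,S_m)$ assuming the reduction has been made, is not reachable from your steps~(i)--(ii).

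The paper sidesteps this by \emph{not} reducing to a simple over $\Lambda$. It enlarges $Q$ to a non-Dynkin $\widetilde{Q}$ and, via Lemma~\ref{extend support}, views $L^j_\ww$ itself as a $\widetilde{\Lambda}$-module; the already-proved non-Dynkin statement applied to $L^j_\ww$ over $\widetilde{\Lambda}$ yields $\End_{\widetilde{\Lambda}}(L^j_\ww)\simeq k$ and $\Ext^1_{\widetilde{\Lambda}}(L^j_\ww,L^j_\ww)=0$. Since $\mod\Lambda$ is a full, extension-closed subcategory of $\mod\widetilde{\Lambda}$, these agree with $\End_\Lambda$ and $\Ext^1_\Lambda$. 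Finally $\Ext^2_\Lambda(L^j_\ww,L^j_\ww)\simeq k$ follows from the stable $2$-CY property of $\mod\Lambda$. The $6$-periodicity you invoke (via $\Omega^3\simeq\nu_\Lambda$) is only relevant for $i\ge3$ and is handled in a remark following the theorem; the proof of the displayed values never computes $\Ext^\bullet_\Lambda(S_m,S_m)$ at all.
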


Note that one can write down explicitly the dimension for the other $i$ in the Dynkin case by using $\Omega^3\simeq\nu_\Lambda$ \cite{ES98}. In the non-Dynkin case, $L^j_\ww$ is then \emph{2-spherical} in the sense of Seidel-Thomas \cite{ST}.

\begin{proof}
We divide  the proof into two cases, according to whether $\Lambda$ is of non-Dynkin type or of Dynkin type.

\medskip
\noindent
\textit{non-Dynkin case}:
By Proposition \ref{basicprop} (c), $I_{w''}$ is a tilting $\Lambda$-module with $\End_{\Lambda}(I_{w''})\simeq \Lambda$. Hence the functor $-\lten_\Lambda I_{w''}$ is an autoequivalence of $\Dd(\Mod \Lambda)$. We have $\End_\Lambda(S_j)\simeq k$ and hence $\Ext^2_\Lambda(S_j,S_j)\simeq k$ since $\Dd^b(\fl \Lambda)$ is 2-CY. Moreover since $Q$ has no loops, $\Ext^1_\Lambda(S_j,S_j)$ vanishes and since $\Lambda$ is known to have global dimension $2$, $\Ext^n_\Lambda(S_j,S_j)$ vanishes for $n\geq 3$. Hence $S_j$ is 2-spherical. Since by Proposition \ref{layerimagesimple} the layer $L^j_\ww$ is the image of the simple $S_j$ by an autoequivalence of $\Dd^b(\fl \Lambda)$, it follows that $L^j_\ww$ is also 2-spherical.

\medskip
\noindent
\textit{Dynkin case}:

We take a non-Dynkin quiver $\widetilde{Q}$ containing $Q$ as a full subquiver. Let $\widetilde{\Lambda}$ be the complete preprojective algebra of $\widetilde{Q}$. Then $\mod \Lambda$ can be seen as a full and extension closed subcategory of $\mod \widetilde{\Lambda}$.
Using the non-Dynkin case and Lemma \ref{extend support}, we get
$$\End_\Lambda(L^j_\ww)\simeq \End_{\widetilde{\Lambda}}(L^j_\ww)\simeq k \textrm{ and } \Ext^1_\Lambda(L^j_\ww,L^j_\ww)\simeq \Ext^1_{\widetilde{\Lambda}}(L^j_\ww,L^j_\ww)=0.$$ Using the fact that $\mod \Lambda$ is stably 2-CY we get $\Ext^2_\Lambda(L^j_\ww,L^j_\ww)\simeq k$.
\end{proof}

Here we state a property about two consecutive layers associated with the same vertex, which gives rise to special non-split short exact sequences in $\fl \Lambda$.

\begin{prop}\label{dimext1}
Let $1\leq i<j<k\leq l$ be integers such that $u_i=u_j=u_k$ and such that $j$ is the only integer satisfying $i<j<k$ and  $u_i=u_j=u_k$ .
Then we have $$\dim_k \Ext^1_\Lambda(L^j_\ww,L^k_\ww)=1.$$
\end{prop}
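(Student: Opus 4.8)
The plan is to reduce the statement to a computation about simple $\Lambda$-modules by using Proposition~\ref{layerimagesimple}, exactly as in the proof of Theorem~\ref{2spherical}. First I would treat the non-Dynkin case; the Dynkin case should follow by the same embedding trick (choose $\widetilde{Q}\supseteq Q$ non-Dynkin, apply Lemma~\ref{extend support}, and use that $\mod\Lambda$ is extension closed in $\mod\widetilde{\Lambda}$ so that $\Ext^1_\Lambda(L^j_\ww,L^k_\ww)\simeq\Ext^1_{\widetilde{\Lambda}}(L^j_\ww,L^k_\ww)$). So assume $Q$ is non-Dynkin and write $v:=u_i=u_j=u_k$. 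By Proposition~\ref{layerimagesimple}(2), with $\ww''=s_{u_1}\cdots s_{u_{j-1}}$ and the analogous truncation for $k$, we have $L^j_\ww\simeq S_v\lten_\Lambda I_{w''}$ and $L^k_\ww\simeq S_v\lten_\Lambda I_{u_{k-1}}\cdots I_{u_1}$. Since $I_{w''}$ is a tilting module of projective dimension $\le 1$ with $\End_\Lambda(I_{w''})\simeq\Lambda$ (Proposition~\ref{basicprop}(c)), the functor $-\lten_\Lambda I_{w''}$ is a self-equivalence of $\Dd(\Mod\Lambda)$, and the remaining ideals $I_{u_{k-1}}\cdots I_{u_{j}}$ in the expression for $L^k_\ww$ can be factored as $(I_{u_{k-1}}\cdots I_{u_j})\lten_\Lambda I_{w''}$ (using repeatedly Proposition~\ref{basicprop}(e) along the reduced word). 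Applying the inverse self-equivalence, I get
\[
\Ext^1_\Lambda(L^j_\ww,L^k_\ww)\simeq\Ext^1_\Lambda\bigl(S_v,\ S_v\lten_\Lambda J\bigr),\qquad J:=I_{u_{k-1}}\cdots I_{u_{j+1}}I_{u_j},
\]
where I must be careful to check $J$ really is the relevant product, i.e.\ that $\ww$ restricted to positions $j,\dots,k-1$ is reduced (which it is, being a subword of a reduced word in consecutive positions — here is where the hypothesis that $j$ is the \emph{only} occurrence of $v$ strictly between $i$ and $k$ is not yet used; it will be used below).

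Next I would identify $S_v\lten_\Lambda J$ concretely. Because $s_v$ does not occur in the word $s_{u_{j+1}}\cdots s_{u_{k-1}}$ (this is precisely the hypothesis that $j$ is the unique intermediate occurrence, so none of $u_{j+1},\dots,u_{k-1}$ equals $v$), we have $\Tor_1^\Lambda(S_v, I_{u_{j+1}}\cdots I_{u_{k-1}})=0$ by Proposition~\ref{basicprop}(d)--(e) applied stepwise, hence $S_v\lten_\Lambda(I_{u_{k-1}}\cdots I_{u_{j+1}})\simeq S_v\ten_\Lambda(I_{u_{k-1}}\cdots I_{u_{j+1}})$ is concentrated in degree $0$; call it $N$. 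Then $S_v\lten_\Lambda J\simeq N\lten_\Lambda I_{u_j}$, and since $u_j=v$, the module $N$ is a $\Lambda/I_v$-module on which $\Tor_1^\Lambda(-,I_v)$ can be analyzed. Actually the cleanest route is the reverse: write $L^k_\ww\simeq S_v\lten_\Lambda I_{u_{k-1}}\cdots I_{u_j}\lten_\Lambda I_{w''}$ and note $S_v\lten_\Lambda I_{u_{k-1}}\cdots I_{u_j}$ is (a shift of) $L^{k'}$ for the subword $s_v s_{u_{j+1}}\cdots s_{u_{k-1}}$ of $\ww$ — that is, $L^k_\ww$ and $L^j_\ww$ differ by $-\lten_\Lambda(I_{u_{k-1}}\cdots I_{u_{j+1}})$ up to the common self-equivalence, so after cancelling the self-equivalence $-\lten_\Lambda I_{w''}$ and the factor $I_{u_j}=I_v$ I reduce to computing $\Ext^1_\Lambda(S_v, S_v\lten_\Lambda I_{u_{k-1}}\cdots I_{u_{j+1}})=\Ext^1_\Lambda(S_v,N)$ with $N=S_v\ten_\Lambda(I_{u_{k-1}}\cdots I_{u_{j+1}})$ a genuine module (no higher Tor).

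Finally, to evaluate $\dim_k\Ext^1_\Lambda(S_v,N)$: since $N$ arises from $S_v$ by applying self-equivalences associated to ideals $I_{u_m}$ with $u_m\ne v$, its class in $K_0$ is obtained from $[S_v]$ by the corresponding reflections, and by 2-CY duality $\Ext^1_\Lambda(S_v,N)\simeq D\Ext^1_\Lambda(N,S_v)$ while the Euler form gives $\dim\Hom_\Lambda(S_v,N)-\dim\Ext^1_\Lambda(S_v,N)+\dim\Ext^2_\Lambda(S_v,N)=\langle[S_v],[N]\rangle_Q$ (computed via the symmetrized Euler/Cartan form of $Q$, since $\Lambda$ has global dimension $2$ and the relevant form on $K_0(\fl\Lambda)$ is the symmetrization of the $kQ$-Euler form). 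Using $\Ext^2_\Lambda(S_v,N)\simeq D\Hom_\Lambda(N,S_v)$ and that $N$ is indecomposable with $\Hom$ and $\Ext^2$ to/from $S_v$ controlled by whether $S_v$ is a sub/quotient, one computes the alternating sum explicitly and sees it equals $-1$ (because the reflection in $s_v$ flips the sign appropriately — this matches the known fact, cf.\ \cite{IR2, GLS}, that consecutive layers at the same vertex form a non-split extension). The main obstacle is the last step: pinning down $\dim\Hom_\Lambda(S_v,N)$ and $\dim\Ext^2_\Lambda(S_v,N)$ and showing the Euler form contribution forces $\dim\Ext^1=1$ exactly rather than merely $\ge 1$. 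I expect this requires showing $\Hom_\Lambda(S_v,N)=\Ext^2_\Lambda(S_v,N)=0$, i.e.\ $S_v$ is neither a sub nor (dually) a quotient of $N$, which should follow from $N$ being a $\Lambda/I_v\simeq\Lambda_{Q\setminus v}$-module — wait, $N$ need not be such a module, so more care is needed: one instead argues $N\simeq S_v\ten_\Lambda I_{u_{k-1}}\cdots I_{u_{j+1}}$ has a top/socle supported away from $v$ except possibly... — this delicate socle/top analysis, for which I would invoke the explicit description of $M^m_\ww$ and the exact sequences relating $L$'s and $M$'s from Section~1, is where the real work lies. An alternative, possibly cleaner, is to exhibit the non-split extension directly: the chain of ideals gives $0\to I_{u_k}\cdots I_{u_1}/I_{u_j}\cdots I_{u_1}\to \text{(something)}\to L^j_\ww\to 0$ relating $L^j$ and $L^k$ as sub/quotient of $M^j_\ww$, producing at least one non-split extension, and then combine with the Euler-form upper bound $\dim\Ext^1\le 1$; I would present the proof along these two complementary lines.
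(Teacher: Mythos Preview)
Your reduction via autoequivalences is a genuinely different route from the paper's, and with one correction it actually goes through more cleanly than you think. The slip is in the step ``cancel the factor $I_{u_j}=I_v$'': you write that the problem becomes $\Ext^1_\Lambda(S_v,N)$, but it becomes $\Ext^2_\Lambda(S_v,N)$. Indeed $S_v\lten_\Lambda I_v\simeq S_v[1]$ (from the triangle $S_v\lten I_v\to S_v\to S_v\lten S_v$ and the fact that $S_v$ is $2$-spherical one gets $\Tor_0^\Lambda(S_v,I_v)=0$ and $\Tor_1^\Lambda(S_v,I_v)\simeq S_v$), so applying the inverse of $-\lten_\Lambda I_v$ sends $S_v$ to $S_v[-1]$ and $L'$ to $N$, giving
\[
\Ext^1_\Lambda(L^j_\ww,L^k_\ww)\simeq\Ext^1_\Lambda(S_v,L')\simeq\Hom_{\Dd}(S_v[-1],N[1])=\Ext^2_\Lambda(S_v,N)\simeq D\Hom_\Lambda(N,S_v).
\]
So what you need is $\dim\Hom_\Lambda(N,S_v)=1$, not the vanishing you expected. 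And this is immediate: writing $I'=I_{u_{k-1}}\cdots I_{u_{j+1}}$ and $J=\rad\Lambda$, Proposition~\ref{basicprop}(b) gives $e_vI'=e_v\Lambda$ (here you use that no $u_m$ with $j<m<k$ equals $v$), hence $N=S_v\ten_\Lambda I'\simeq e_v\Lambda/e_vJI'$ and $\mathrm{top}(N)=e_v\Lambda/e_vJ=S_v$. No delicate socle/top analysis is required, and the Euler-form detour is unnecessary.

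For comparison, the paper never passes through autoequivalences. It works directly with the $M^m_\ww$: from the two defining sequences $L^j_\ww\hookrightarrow M^j_\ww\twoheadrightarrow M^i_\ww$ and $L^k_\ww\hookrightarrow M^k_\ww\twoheadrightarrow M^j_\ww$ it builds an explicit non-split extension $L^k_\ww\hookrightarrow K\twoheadrightarrow L^j_\ww$ (with $K=\Ker(M^k_\ww\to M^i_\ww)$ shown to be indecomposable), giving $\dim\Ext^1\ge1$; the bound $\le1$ comes from a separate lemma computing the maps $\Hom_\Lambda(M^k_\ww,M^j_\ww)\to\Hom_\Lambda(M^k_\ww,M^i_\ww)$ and $\Hom_\Lambda(M^i_\ww,M^j_\ww)\to\Hom_\Lambda(M^j_\ww,M^j_\ww)$ via the explicit description of these Hom-spaces as $e(\Lambda/I_{\bullet})e$, together with the $2$-CY property. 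Your corrected argument is shorter and more conceptual; the paper's has the advantage of exhibiting the extension concretely and of using only module-level information about the $M^m_\ww$ rather than derived autoequivalences.
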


\medskip

In order to prove this proposition, we first need a lemma.
For $1\leq h \leq l$, we denote as before by $M^h_\ww$ the $\Lambda$-module $M^h_\ww:=e_{u_h}\frac{\Lambda}{I_{u_h}\ldots I_{u_1}}$.

\begin{lema}\label{petitlemme} Let $i<j<k$ be as in Proposition \ref{dimext1}. 
\begin{enumerate}
\item The map $\Hom_{\Lambda}(M^k_\ww,M^j_\ww)\rightarrow \Hom_\Lambda(M_\ww^k,M_\ww^i)$ induced by the irreducible map $M_\ww^j\rightarrow  M_\ww^i$ is an epimorphism.
\item The image of the map $\Hom_\Lambda(M_\ww^i,M_\ww^j)\rightarrow \Hom_\Lambda(M_\ww^j,M_\ww^j)$ induced by the irreducible map $M_\ww^j\rightarrow M_\ww^i$ is $\Rad_\Lambda(M_\ww^j,M_\ww^j)$ .
\end{enumerate}
\end{lema}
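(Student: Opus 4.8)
\textbf{Proof plan for Lemma \ref{petitlemme}.}

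The plan is to work inside the cluster-tilting object $M_\ww=\bigoplus_h M^h_\ww$ in the stably 2-CY Frobenius category $\Sub\Lambda_w$, and to exploit the combinatorial description of irreducible maps between the summands $M^h_\ww$ coming from the chain of ideals. The crucial structural fact I would invoke is that, by Proposition \ref{basicprop}(b), for $i<j$ with $u_i=u_j$ one has a canonical surjection $M^j_\ww\twoheadrightarrow M^i_\ww$ with kernel $L^j_\ww$, and these maps are precisely the ``irreducible'' arrows between summands sharing a vertex. So for the three indices $i<j<k$ with $u_i=u_j=u_k$ and no intermediate index at the same vertex, I get a commutative configuration of surjections $M^k_\ww\twoheadrightarrow M^j_\ww\twoheadrightarrow M^i_\ww$ with kernels $L^k_\ww$ and $L^j_\ww$ respectively; composing, the kernel of $M^k_\ww\twoheadrightarrow M^i_\ww$ has $L^j_\ww$ as a quotient with kernel $L^k_\ww$. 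This short exact sequence $0\to L^k_\ww\to \Ker(M^k_\ww\to M^i_\ww)\to L^j_\ww\to 0$ is exactly the non-split extension that Proposition \ref{dimext1} is about, so Lemma \ref{petitlemme} is the technical engine feeding that computation.

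For part (1), I would argue as follows. Applying $\Hom_\Lambda(M^k_\ww,-)$ to the short exact sequence $0\to L^j_\ww\to M^j_\ww\to M^i_\ww\to 0$ gives an exact sequence
\[
\Hom_\Lambda(M^k_\ww,M^j_\ww)\to \Hom_\Lambda(M^k_\ww,M^i_\ww)\to \Ext^1_\Lambda(M^k_\ww,L^j_\ww),
\]
so it suffices to show the connecting map vanishes, i.e.\ that $\Ext^1_\Lambda(M^k_\ww,L^j_\ww)=0$. Here I would use that $M_\ww$ is cluster-tilting in $\Sub\Lambda_w$ (Theorem \ref{birsc}), so $\Ext^1_{\Sub\Lambda_w}(M^k_\ww,X)=0$ whenever $X\in\Sub\Lambda_w$; since the layer $L^j_\ww$ lies in $\Sub\Lambda_w$ (as recalled in the introduction and following from Proposition \ref{layerimagesimple} identifying it as a submodule), and since $\Ext^1$ in the exact category $\Sub\Lambda_w$ agrees with $\Ext^1_\Lambda$ on this category, the connecting homomorphism is zero and the map in question is surjective. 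Alternatively, if one wants to stay purely in $\fl\Lambda$, one can identify $L^j_\ww$ with $S_{u_j}\ten_\Lambda I_{u_{j-1}}\cdots I_{u_1}$ via Proposition \ref{layerimagesimple} and twist the vanishing back from the corresponding statement for $M^k$; I expect the cluster-tilting argument to be cleanest.

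For part (2), I would again apply a Hom-functor to the same sequence, this time $\Hom_\Lambda(-,M^j_\ww)$ to $0\to L^j_\ww\to M^j_\ww\xrightarrow{\pi} M^i_\ww\to 0$, obtaining that the image of $\pi^*\colon \Hom_\Lambda(M^i_\ww,M^j_\ww)\to \Hom_\Lambda(M^j_\ww,M^j_\ww)$ consists precisely of those endomorphisms of $M^j_\ww$ that factor through $\pi$, equivalently that annihilate $L^j_\ww$. By Theorem \ref{2spherical}, $\End_\Lambda(M^j_\ww)$ is local with $\End_\Lambda(M^j_\ww)/\Rad = k$ (the layer modules are bricks, and so are the $M^h_\ww$ as indecomposables with local endomorphism ring in the 2-CY setting); so $\Rad_\Lambda(M^j_\ww,M^j_\ww)$ has codimension one, and it suffices to show the image is a proper subspace and is contained in the radical. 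Properness is clear since $\mathrm{id}_{M^j_\ww}$ does not kill $L^j_\ww\neq 0$. Containment in the radical: any $\varphi=\pi^*(\psi)$ factors as $M^j_\ww\xrightarrow{\pi}M^i_\ww\xrightarrow{\psi}M^j_\ww$, and since $\pi$ is not a split epimorphism (its kernel $L^j_\ww$ is nonzero and $M^i_\ww\not\cong M^j_\ww$) and $M^i_\ww, M^j_\ww$ are indecomposable, the composite lies in the radical of the category. So the image is contained in $\Rad_\Lambda(M^j_\ww,M^j_\ww)$, and being proper inside a space it already equals, it fills it — but one still has to show it is all of the radical, not just contained in it.

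The main obstacle is exactly that surjectivity onto the full radical in part (2): a priori the image of $\pi^*$ is a one-dimensional-deficient subspace only if $\End(M^j_\ww)$ itself is two-dimensional, which need not hold. The right fix is to argue that $\pi\colon M^j_\ww\to M^i_\ww$ is an \emph{irreducible} morphism (a sink map component among the $M^h$'s), so that every nonisomorphism out of $M^j_\ww$ into an $\add M_\ww$-module factors through the minimal left almost split map, and in the quiver of $\End_\Lambda(M_\ww)$ the radical $\Rad(M^j_\ww,M^j_\ww)$ is generated by paths through neighbouring vertices; the hypothesis that $j$ is the \emph{unique} index strictly between $i$ and $k$ with $u_i=u_j=u_k$ is what guarantees there is a single relevant cycle $M^j\to M^i\to\cdots\to M^j$ (or $M^j\to M^k\to\cdots\to M^j$) generating that radical, so that every radical endomorphism of $M^j_\ww$ indeed factors through $\pi$. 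Pinning down this ``unique return path'' combinatorics — equivalently, reading off from the structure of $I_{u_j}\cdots I_{u_1}$ that $\Rad_\Lambda(M^j_\ww,M^j_\ww)$ is cyclic as a bimodule and generated by the composite through $M^i_\ww$ — is where the real work lies; once that is in hand, both statements drop out of the long exact sequences above.
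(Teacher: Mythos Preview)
Your approach for part (1) contains a genuine error: you claim that because $M_\ww$ is cluster-tilting in $\Sub\Lambda_w$, one has $\Ext^1_{\Sub\Lambda_w}(M^k_\ww,X)=0$ for every $X\in\Sub\Lambda_w$. That is false --- it would make $M^k_\ww$ projective in the Frobenius category $\Sub\Lambda_w$, which it is not in general (only the $M^{t_\ww(i)}_\ww$ with $t_\ww(i)$ the last occurrence of $i$ are projective-injective there). The cluster-tilting condition only gives $\Ext^1_\Lambda(M_\ww,M_\ww)=0$; it says nothing about $\Ext^1_\Lambda(M^k_\ww,L^j_\ww)$ since $L^j_\ww\notin\add M_\ww$. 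In fact, in the paper the vanishing $\Ext^1_\Lambda(M^k_\ww,L^j_\ww)=0$ is \emph{deduced from} Lemma \ref{petitlemme}(1), not used to prove it, so your argument is effectively circular. For part (2) you correctly isolate the real difficulty --- that containment in $\Rad_\Lambda(M^j_\ww,M^j_\ww)$ is easy but equality is not --- and then you do not resolve it; the ``unique return path'' sketch is suggestive but not a proof.

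The paper sidesteps both long-exact-sequence manoeuvres entirely by invoking an explicit description of the Hom-spaces from \cite[Lemma III.1.14]{Bua2}: with $e:=e_{u_i}=e_{u_j}=e_{u_k}$ one has
\[
\Hom_\Lambda(M^k_\ww,M^j_\ww)\simeq e\,\dfrac{\Lambda}{I_{u_j}\cdots I_{u_1}}\,e,\qquad
\Hom_\Lambda(M^k_\ww,M^i_\ww)\simeq e\,\dfrac{\Lambda}{I_{u_i}\cdots I_{u_1}}\,e,
\]
and the map in (1) is the canonical quotient, hence surjective. For (2) the same lemma gives
\[
\Hom_\Lambda(M^i_\ww,M^j_\ww)\simeq e\,\dfrac{I_{u_j}\cdots I_{u_{i+1}}}{I_{u_j}\cdots I_{u_1}}\,e,\qquad
\Rad_\Lambda(M^j_\ww,M^j_\ww)\simeq e\,\dfrac{I_{u_j}}{I_{u_j}\cdots I_{u_1}}\,e,
\]
and the hypothesis that $j$ is the \emph{only} index strictly between $i$ and $k$ with $u_j=u_i$ yields $e\,I_{u_j}\cdots I_{u_{i+1}}\,e = e\,I_{u_j}\,e$ (since $e_{u_j}I_t=e_{u_j}\Lambda$ for $t\neq u_j$ by Proposition \ref{basicprop}(b)), so the induced map is an isomorphism onto the radical. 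This is exactly the ``ideal combinatorics'' you allude to at the end, but made precise by the Hom-formula; you should use that formula rather than try to extract the result from cluster-tilting.
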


\begin{proof}
(1) Since $i<j<k$, then by Lemma III.1.14 of \cite{Bua2}, we have isomorphisms 
$$\Hom_{\Lambda}(M_\ww^k,M_\ww^j)\simeq e \frac{\Lambda}{I_{u_j}\ldots I_{u_1}}e \quad \textrm{and} \quad \Hom_\Lambda(M_\ww^k,M_\ww^i)\simeq 
e \frac{\Lambda}{I_{u_i}\ldots I_{u_1}}e,$$ where $e$ is the idempotent $e:=e_{u_i}=e_{u_j}=e_{u_k}$. Then the map $\Hom_{\Lambda}(M_\ww^k,M_\ww^j)\rightarrow \Hom_\Lambda(M_\ww^k,M_\ww^i)$ is the epimorphism $e \frac{\Lambda}{I_{u_j}\ldots I_{u_1}}e_{u_k}\rightarrow e \frac{\Lambda}{I_{u_i}\ldots I_{u_1}}e_{u_k}$ induced by the inclusion $I_{u_j}\ldots I_{u_1}\subset I_{u_i}\ldots I_{u_1}$.

(2) It is clear that the image is contained in the radical. 
By Lemma III.1.14 of \cite{Bua2}, we have isomorphisms
$$ \Hom_{\Lambda}(M_\ww^i,M_\ww^j)\simeq e\frac{I_{u_j}\ldots I_{u_{i+1}}}{I_{u_j}\ldots I_{u_1}}e\quad\textrm{and}\quad \Rad_\Lambda(M_\ww^j,M_\ww^j)\simeq e\frac{I_{u_j}}{I_{u_j}\ldots I_{u_1}}e.$$
The map $\Hom_\Lambda(M_\ww^i,M_\ww^j)\rightarrow \Rad_\Lambda(M_\ww^j,M_\ww^j)$ is induced by the inclusion of ideals $I_{u_j}\ldots I_{u_{i+1}}\subset I_{u_j}$.
But since $j$  is the only integer satisfying $i<j<k$ and  $u_i=u_i=u_k$ , we have $e I_{u_j}\ldots I_{u_{i+1}}e\simeq e I_{u_j}e$ and hence the map $\Hom_\Lambda(M_\ww^i,M_\ww^j)\rightarrow \Rad_\Lambda(M_\ww^j,M_\ww^j)$ is an isomorphism.
\end{proof}

\begin{proof}[Proof of Proposition \ref{dimext1}.]

By the definition of the layers, we have the following short exact sequences
$$(j)\quad \xymatrix{ L^j_\ww\ar@{>->}[r] & M_\ww^j\ar@{->>}[r] & M_\ww^i} \quad\textrm{and } (k)\quad \xymatrix{ L^k_\ww\ar@{>->}[r] & M_\ww^k\ar@{->>}[r] & M_\ww^j} $$

Let $K$ be the kernel of the composition map $M_\ww^k\rightarrow M_\ww^j\rightarrow M_\ww^i$. Then we have a short exact sequence 
$$(l)\quad \xymatrix{ K\ar@{>->}[r] & M_\ww^k\ar@{->>}[r] & M_\ww^i}$$ which gives rise to the following long exact sequence in $\mod \End_\Lambda(M_\ww)$, where $M_\ww=\bigoplus_{h=1}^l M_\ww^h$:
$$ \xymatrix{D\Ext^1_\Lambda(M_\ww^i,M_\ww)\ar[r] & D\Hom_\Lambda(K,M_\ww)\ar[r] & D\Hom_{\Lambda}(M_\ww^k, M_\ww)\ar[r] & D\Hom_{\Lambda}(M_\ww^i, M_\ww)\ar[r] & 0}$$
The space $D\Ext^1_\Lambda(M_\ww^i,M_\ww)$ is zero by Lemma III.2.1 of \cite{Bua2}, and the $\End_\Lambda(M_\ww)$-module $D\Hom_{\Lambda}(M_\ww^k, M_\ww)$ is indecomposable injective. Therefore the module $D\Hom_\Lambda(K,M_\ww)$ has simple socle, and hence $K$ is indecomposable.

Moreover from the sequences $(j)$, $(k)$ and $(l)$, we deduce that we have a short exact sequence
$\xymatrix{ L^k_\ww\ar@{>->}[r] & K\ar@{->>}[r] & L^j_\ww}$ which is non-split since $K$ is indecomposable. Hence we get 
$$\dim_k\Ext^1_\Lambda(L^j_\ww,L^k_\ww)\geq 1.$$

From $(j)$ we deduce the long exact sequence
$$\xymatrix{\cdots\ar[r] & \Hom_\Lambda(M_\ww^k,M_\ww^j)\ar[r] & \Hom_\Lambda(M_\ww^k,M_\ww^i)\ar[r] & \Ext^1_\Lambda(M_\ww^k,L_\ww^j)\ar[r] & \Ext^1_\Lambda(M_\ww^k,M_\ww^j)=0}.$$
Hence by Lemma \ref{petitlemme} (1) we get  $\Ext^1_\Lambda(M_\ww^k,L_\ww^j)=0$.

From $(j)$ we also deduce the long exact sequence
$$\xymatrix{0\ar[r] & \Hom_\Lambda(M_\ww^i,M_\ww^j)\ar[r] & \Hom_\Lambda(M_\ww^j,M_\ww^j)\ar[r] & \Hom_\Lambda(L^j_\ww,M_\ww^j)\ar[r] & \Ext^1_\Lambda(M_\ww^i,M_\ww^j)=0}.$$
Hence by Lemma \ref{petitlemme} (2) we get $\Hom_\Lambda(L^j_\ww,M_\ww^j)\simeq \Hom_\Lambda(M_\ww^j,M_\ww^j)/\Rad_\Lambda(M_\ww^j,M_\ww^j)$ which is one dimensional since $M_\ww^j$ is indecomposable.

Finally using $(k)$ we get the long exact sequence $$\xymatrix{\cdots\ar[r] & \Ext^1_\Lambda(M_\ww^k,L^j_\ww)\ar[r] & \Ext^1_\Lambda(L^k_\ww,L^j_\ww)\ar[r] & \Ext^2_\Lambda(M_\ww^j,L^j_\ww)\ar[r] & \cdots}$$
By the 2-CY property and the previous remarks we have $$\Ext^1_\Lambda(M_\ww^k,L_\ww^j)=0 \quad \textrm{and} \quad\Ext^2_\Lambda(M_\ww^j,L^j_\ww)\simeq D\Hom_\Lambda(L^j_\ww,M_\ww^j)\simeq k$$ and therefore 
$$\dim_k\Ext^1_\Lambda(L^j_\ww,L^k_\ww)\leq 1.$$

\end{proof}

\subsection{The dimension vectors of the layers}
In this section we investigate the action of the functor $-\lten_\Lambda I_w$ at the level of the Grothendieck group of $\Dd^b(\fl \Lambda)$ when $\Lambda$ is not of Dynkin type. We show that this action has interesting connections with known actions of Coxeter groups. We denote by $[-\lten_\Lambda I_w]$ the induced automorphism of $K_0(\Dd^b(\fl\Lambda))$.

\begin{lema}\label{dimvectlemma} Let $Q$ be a non-Dynkin quiver.
For all $i,j$ in $Q_0$ we have $$[S_j\lten_\Lambda I_i]=[S_j] + (m_{ij} -2 \delta_{ij})[S_i]$$ in $K_0(\Dd^b(\fl \Lambda))$, where $m_{ij}$ is the number of arrows between $i$ and $j$ in $Q$.
\end{lema}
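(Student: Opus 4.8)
The plan is to compute the class $[S_j \lten_\Lambda I_i]$ in $K_0(\Dd^b(\fl\Lambda))$ by using a projective resolution of $I_i$ as a $\Lambda$-module and then tensoring. First I would recall that $I_i = \Lambda(1-e_i)\Lambda$, so there is a short exact sequence of $\Lambda$-bimodules (or, more to the point, of right $\Lambda$-modules after applying $S_j \lten_\Lambda -$) exhibiting $I_i$ as the kernel of $\Lambda \twoheadrightarrow S_i = \Lambda/I_i$. Since $Q$ is non-Dynkin, by Proposition \ref{basicprop}(c) the module $I_i$ has projective dimension at most one, so we get a projective resolution of the right module $I_i$ whose terms are explicit sums of the $e_k\Lambda$; concretely, the minimal projective presentation of $S_i$ over $\Lambda$ has the form $\bigoplus_{a} e_{k}\Lambda \to e_i\Lambda \to S_i \to 0$ where the middle-term multiplicities are governed by the arrows of $\overline{Q}$ at $i$, i.e.\ by $m_{ij}$, and the Calabi--Yau/global-dimension-two structure forces the resolution of $S_i$ to be self-dual of length two: $0 \to e_i\Lambda \to \bigoplus_{k} e_k\Lambda^{m_{ik}} \to e_i\Lambda \to S_i \to 0$.

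Next I would apply $S_j \lten_\Lambda -$ to the two-term complex $[\,0 \to \Lambda(1-e_i)\hookrightarrow \Lambda \,]$ representing... more precisely, I would use that in $K_0(\Dd^b(\fl\Lambda))$ we have $[S_j \lten_\Lambda I_i] = [S_j \lten_\Lambda \Lambda] - [S_j \lten_\Lambda S_i] = [S_j] - [S_j \lten_\Lambda S_i]$, since $[-\lten_\Lambda \Lambda]$ is the identity and $K_0$ is additive on triangles. So the computation reduces to evaluating $[S_j \lten_\Lambda S_i]$, i.e.\ the alternating sum $\sum_n (-1)^n [\Tor_n^\Lambda(S_j, S_i)]$. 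Using the length-two self-dual projective resolution of $S_i$ above, tensoring with $S_j$ and taking Euler characteristics in $K_0(\fl\Lambda)$ (which is free on $[S_1],\ldots,[S_n]$), only the simple $S_j$ appears: $\Tor_0 = S_j$ if $i=j$ and $0$ otherwise, $\Tor_1^\Lambda(S_j,S_i)$ is $S_j^{m_{ij}}$ up to the relevant bookkeeping, and $\Tor_2^\Lambda(S_j,S_i) = S_j$ if $i=j$ and $0$ otherwise (this last by the 2-CY duality, $\Tor_2^\Lambda(S_j,S_i) \cong D\Ext^2$-type symmetry, or directly from self-duality of the resolution). Collecting terms gives $[S_j \lten_\Lambda S_i] = (2\delta_{ij} - m_{ij})[S_i]$, whence $[S_j \lten_\Lambda I_i] = [S_j] + (m_{ij} - 2\delta_{ij})[S_i]$.

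The main obstacle is getting the middle-term multiplicity in the projective resolution of $S_i$ exactly right, i.e.\ confirming that $\dim_k e_j(\rad P_i / \rad^2 P_i) = m_{ij}$ for the preprojective algebra $\Lambda$ — this is where the number of arrows in the \emph{double} quiver $\overline{Q}$ between $i$ and $j$, which is precisely $m_{ij}$, enters — together with checking that no simple $S_k$ with $k \neq i, j$ survives in the Euler characteristic (it cannot, since after multiplying by $S_j$ on the left, only $e_j\Lambda$-summands contribute, each contributing a copy of $S_j$). One should also be slightly careful that $\Tor_2^\Lambda(S_j, S_i)$ contributes with a $+$ sign and equals $[S_i]$ exactly when $i = j$; this follows either from the explicit minimal resolution $0 \to P_i \to \bigoplus_k P_k^{m_{ik}} \to P_i \to S_i \to 0$ of the preprojective algebra (a standard fact, e.g.\ from \cite{GLS07}) or from $\Ext^2_\Lambda(S_i,S_j) \cong D\Hom_\Lambda(S_j,S_i)$ by 2-CY. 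Given Proposition \ref{basicprop} and the stated global dimension and 2-CY properties, the argument is then a short bookkeeping computation. Alternatively — and perhaps cleanest to present — one observes that $[-\lten_\Lambda I_i]$ is an automorphism of $K_0(\Dd^b(\fl\Lambda)) \cong \mathbb{Z}^n$ fixing $[S_k]$ for $k \neq i$ (since $e_k I_i = e_k \Lambda$ for $k \neq i$ by the bimodule structure, so $S_k \lten_\Lambda I_i \cong S_k$) and sending $[S_i]$ to $[S_i \lten_\Lambda I_i] = [I_i/I_i^2\text{-type class}]$, which one computes from the resolution of $S_i$ to be $-[S_i] + \sum_{k \neq i} m_{ik}[S_k]$; this is exactly the formula, and it manifestly agrees with the simple reflection $R_i$ on $K_0(\mod kQ)$ recalled in Section 1.3, which is the point of the section.
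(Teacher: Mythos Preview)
Your main line of argument is essentially the paper's own: use the triangle $I_i\to\Lambda\to S_i\to I_i[1]$ to reduce $[S_j\lten_\Lambda I_i]$ to $[S_j]-[S_j\lten_\Lambda S_i]$, and then compute the Euler characteristic of $S_j\lten_\Lambda S_i$. The paper does the latter slightly differently: instead of writing down the minimal projective resolution of $S_i$ and tensoring, it uses the bimodule self-duality $DS_i\simeq S_i$ to get $S_j\lten_\Lambda S_i\simeq DR\Hom_\Lambda(S_j,S_i)$, so that the class is $\bigl(\sum_t(-1)^t\dim\Ext^t_\Lambda(S_j,S_i)\bigr)[S_i]$, and then reads off $\dim\Ext^0=\dim\Ext^2=\delta_{ij}$, $\dim\Ext^1=m_{ij}$ directly. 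This makes the factor $[S_i]$ transparent.

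Your version has one slip you should fix: you write that the $\Tor$-groups are copies of $S_j$, but then conclude with a multiple of $[S_i]$. The correct statement is that each $\Tor_n^\Lambda(S_j,S_i)$ is a sum of copies of $S_i$ as a right $\Lambda$-module, because the right action of $\Lambda$ factors through $\Lambda/I_i=S_i\simeq k$. With that correction the bookkeeping is consistent.

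There is, however, a genuine error in your ``alternative'' argument at the end. You claim that $[-\lten_\Lambda I_i]$ fixes $[S_k]$ for $k\neq i$, justified by $e_kI_i=e_k\Lambda$. That equality concerns $I_i$ as a \emph{right} $\Lambda$-module (its $k$-th ``row''), whereas $S_k\lten_\Lambda I_i$ depends on the \emph{left} module structure of $I_i$; these are different. Indeed the very formula you are proving gives $[S_k\lten_\Lambda I_i]=[S_k]+m_{ik}[S_i]$, which is not $[S_k]$ whenever there is an edge between $k$ and $i$. So that alternative should be dropped, or rewritten to compute $[S_i\lten_\Lambda I_i]$ alone and combine it with the correct (non-trivial) action on the other $[S_k]$.
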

\begin{proof}
Since $S_i=\Lambda/I_i$, we have $DS_i\simeq S_i$ as $\Lambda$-bimodules. Hence we have the following isomorphisms in $\Mod (\Lambda^{op}\ten \Lambda)$:
$$\begin{array}{rcl}S_j\lten_\Lambda S_i
&\simeq& D\Hom_k(S_j\lten_\Lambda S_i,k)\\ & \simeq &DR\Hom_\Lambda(S_j, \Hom_k(S_i,k))\\
& \simeq &DR\Hom_\Lambda(S_j, DS_i)\\ & \simeq&
DR\Hom_\Lambda(S_j,S_i).\end{array}$$
Therefore we have $$[S_j\lten_\Lambda S_i]= (\sum_{t}(-1)^t\dim\Ext^t_\Lambda(S_j,S_i))[S_i]=(2\delta_{ij}-m_{ij})[S_i].$$

From the triangle $\xymatrix{S_i[-1]\ar[r] & I_i\ar[r] & \Lambda\ar[r] & S_i}$ we get a triangle
$$\xymatrix{S_j\lten_\Lambda S_i[-1]\ar[r] &S_j\lten_\Lambda  I_i\ar[r] & S_j \ar[r] & S_j\lten_\Lambda S_i}.$$
Hence we have $[S_j\lten_\Lambda I_i]=[S_j]-[S_j\lten_\Lambda S_i]=[S_j]-(2\delta_{ij}-m_{ij})[S_i].$
\end{proof}

From Lemma \ref{dimvectlemma}, we deduce the following results.

\begin{thma}\label{realroot}
 Let $\Lambda$ be the complete preprojective algebra of \emph{any} type.
\begin{enumerate}
\item For $j=1,\ldots, l$ we have $[L^j_\ww]=R_{u_1}\ldots R_{u_{j-1}}([S_{u_j}])$, where the $R_t$ are the reflections defined in Section 1.
In particular all $[L^j_\ww]$ are positive real roots.

\item $[L^1_\ww],\ldots,[L^l_\ww]$ are pairwise different in $K_0(\Dd^b(\fl \Lambda))$.

\item For $j=1,\ldots,l$, there exists a unique indecomposable $kQ$-module $(L^j_\ww)_{Q}$ such that $[L^j_\ww]=[(L^j_\ww)_Q]$.
\end{enumerate}
\end{thma}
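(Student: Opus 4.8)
The plan is to reduce everything to the non-Dynkin case, where Lemma \ref{dimvectlemma} gives us a direct formula for the action of $-\lten_\Lambda I_i$ on the Grothendieck group. First I would prove part (1) for non-Dynkin $Q$ by combining Proposition \ref{layerimagesimple}(2) with Lemma \ref{dimvectlemma}: since $L^j_\ww\simeq S_{u_j}\lten_\Lambda I_{u_{j-1}}\lten_\Lambda\cdots\lten_\Lambda I_{u_1}$ in $\Dd(\Mod\Lambda)$, passing to classes in $K_0(\Dd^b(\fl\Lambda))$ and applying Lemma \ref{dimvectlemma} repeatedly yields $[L^j_\ww]=[S_{u_j}\lten_\Lambda I_{u_{j-1}}\lten_\Lambda\cdots\lten_\Lambda I_{u_1}]=R_{u_1}\cdots R_{u_{j-1}}([S_{u_j}])$, using that $[-\lten_\Lambda I_i]$ acts on the basis $\{[S_t]\}$ exactly by the reflection $R_i$ as defined in Section 1. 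For the Dynkin case I would use the embedding trick already exploited in the proofs of Proposition \ref{layerimagesimple} and Theorem \ref{2spherical}: choose a non-Dynkin $\widetilde Q$ containing $Q$ as a full subquiver, apply Lemma \ref{extend support} to identify the layers, and observe that the reflections $R_{u_t}$ for $\widetilde Q$ restrict to those for $Q$ on the sub-root-lattice spanned by the simples supported on $Q_0$ (since $m_{ij}$ is intrinsic to the full subquiver). Then $[L^j_\ww]=R_{u_1}\cdots R_{u_{j-1}}([S_{u_j}])$ holds for $Q$ as well, and since it is the image of a simple root under a product of simple reflections in the Weyl group $W_Q$, it is a real root; positivity follows because $L^j_\ww$ is a genuine (nonzero) finite-length $\Lambda$-module, so its class is a nonnegative combination of the $[S_t]$, hence a positive real root.

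For part (2), I would argue that the $[L^j_\ww]$ are pairwise distinct. The natural approach is to exploit the fact established in part (1) that $[L^j_\ww]=R_{u_1}\cdots R_{u_{j-1}}([S_{u_j}])$. Fix $j<j'$ and apply the inverse automorphism $(R_{u_1}\cdots R_{u_{j-1}})^{-1}=R_{u_{j-1}}\cdots R_{u_1}$; it suffices to show $[S_{u_j}]\neq R_{u_j}R_{u_{j+1}}\cdots R_{u_{j'-1}}([S_{u_{j'}}])$. This reduces to a standard fact about reduced expressions in Coxeter groups: if $s_{u_j}s_{u_{j+1}}\cdots s_{u_{j'}}$ is a subword of a reduced expression, hence itself reduced, then the root $s_{u_j}\cdots s_{u_{j'-1}}(\alpha_{u_{j'}})$ is a \emph{positive} root different from $\alpha_{u_j}$ — indeed $s_{u_j}$ sends it to a positive root (this is precisely the inversion-counting characterization of reducedness), whereas $s_{u_j}(\alpha_{u_j})=-\alpha_{u_j}$ is negative, so the two cannot coincide. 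Thus the distinctness of the classes follows from the reducedness of $\ww$ together with the dimension-vector formula, and no new representation-theoretic input is needed beyond part (1).

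Part (3) is then essentially a citation: Kac's theorem \cite{Kac80} asserts that for a positive real root $\beta$ of $Q$ there is a unique (up to isomorphism) indecomposable $kQ$-module with dimension vector $\beta$. Since part (1) shows $[L^j_\ww]$ is a positive real root, we define $(L^j_\ww)_Q$ to be that indecomposable $kQ$-module; uniqueness is built into Kac's statement. One technical point worth spelling out is the identification of $K_0(\Dd^b(\fl\Lambda))$ with the root lattice $\mathbb{Z}^{Q_0}=\bigoplus_i\mathbb{Z}[S_i]$ compatibly with the $kQ$ dimension-vector lattice $K_0(\mod kQ)=\bigoplus_i\mathbb{Z}[S_i^{kQ}]$, so that ``$[L^j_\ww]=[(L^j_\ww)_Q]$'' makes literal sense; this is the natural basis-matching $[S_i]\leftrightarrow[S_i^{kQ}]$ and the reflections $R_t$ were defined on exactly this lattice in Section 1.

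The main obstacle I anticipate is part (2): making rigorous the passage from ``the $[L^j_\ww]$ are distinct'' to a clean Coxeter-theoretic statement, and in particular nailing down which roots appear. The cleanest route is the inversion-set argument sketched above, relying on the classical fact that for a reduced word $s_{u_1}\cdots s_{u_l}$ the associated roots $\beta_j:=s_{u_1}\cdots s_{u_{j-1}}(\alpha_{u_j})$, $j=1,\dots,l$, are pairwise distinct positive roots (they are exactly the inversions of $w$). Since $[L^j_\ww]=\beta_j$ by part (1), distinctness is immediate. I would make sure to cite a standard reference for this Coxeter-combinatorics fact rather than reprove it, since it is well known; the only real work in the whole proposition is the bookkeeping in part (1) transferring Lemma \ref{dimvectlemma} through the tensor product and, in the Dynkin case, through the full-subquiver embedding.
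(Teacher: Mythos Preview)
Your proposal is correct and follows essentially the same approach as the paper's own proof: part (1) via Proposition~\ref{layerimagesimple} and Lemma~\ref{dimvectlemma} in the non-Dynkin case plus the full-subquiver embedding (Lemma~\ref{extend support}) in the Dynkin case, part (3) via Kac's theorem, and part (2) via the standard Coxeter fact that the roots $\beta_j=s_{u_1}\cdots s_{u_{j-1}}(\alpha_{u_j})$ attached to a reduced expression are pairwise distinct. The paper simply cites \cite[Prop.~4.4.4]{BB} for that last fact rather than sketching the inversion-set argument you outline, but the content is identical.
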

\begin{proof}
(1)  As in the previous subsection we treat separately the Dynkin and the non-Dynkin case.
The non-Dynkin case is a direct consequence of Lemma \ref{dimvectlemma} and Proposition \ref{layerimagesimple}.

For the Dynkin case, we can follow the strategy of the proof of Theorem \ref{2spherical} . We introduce an extended Dynkin quiver containing $Q$ as subquiver. Then applying reflection functors associated to the vertices of $Q$ to modules whose support do not contain the additional vertex is the same as applying the reflection functors of $Q$. Thus using Lemma \ref{extend support}, the equality coming from the non-Dynkin quiver gives us the equality for $Q$.

Hence the $[L^j_{\ww}]$ are real roots, which are clearly positive since $L^j_{\ww}$ is a module.

(2) By \cite[Prop. 4.4.4]{BB}, $[S_{u_1}]$, $R_{u_1}([S_{u_2}]),\ldots,R_{u_1}\ldots R_{u_{l-1}}([S_{u_l}])$ are pairwise different.
Thus the assertion follows from (1).

(3) From (1) we know that the dimension vector of the layer $L^j_\ww$ is a positive real root, and we get the result applying Kac's Theorem \cite{Kac80}. 
\end{proof}

The layer $L^j_\ww$ is always rigid as a $\Lambda$-module, but the associated indecomposable $kQ$-module $(L^j_\ww)_Q$ is not always rigid as shown in the following.
\begin{exa}\label{exampleLQnonrigid}
Let $Q$ be the quiver $\xymatrix@-.5cm@R=-1mm{ &2\ar[dr] & \\ 1\ar[ur]\ar[rr] && 3}$, and $\ww:=s_1s_2s_3s_2s_1s_3$. Then we have $$L^1_\ww={\bsm 1\esm},\quad L^2_\ww={\bsm 2\\1\esm},\quad L^3_\ww={\bsm &3&&\\1&&2&\\ &&&1\esm}, \quad L^4_\ww={\bsm 3\\1\esm},\quad L^5_\ww={\bsm &&2&&3&&\\&3&&1&&2&\\1&&&&&&1\esm},\quad \textrm{and} \quad L^6_\ww={\bsm &&2&&3\\&3&&1&\\1&&&&\esm}.$$
Thus the associated indecomposable $kQ$-modules are the following:
$$(L^j_\ww)_Q=L^j_\ww \textrm{ for } j=1,\ldots 4,\quad (L^5_\ww)_Q={\bsm &3&&&&&\\ 1&&2&&3&&\\&&&1&&2&\\&&&&&&1\esm},\quad \textrm{and} \quad (L^6_\ww)_Q={\bsm  &3&&&\\1&&2&&3\\&&&1&\esm}.$$
The module $(L^6_\ww)_Q$ lies in the tube of rank 2, with indecomposable objects ${\bsm 3\\1\esm}$ and ${\bsm 2\esm}$ on the border of the tube. Since $(L^6_\ww)_Q$ is not on the border of the tube, it is not rigid.
\end{exa}

\begin{dfa}\cite{BB}
Let $Q$ be an acyclic quiver with $n$ vertices, and $W_Q$ be the Coxeter group of $Q$.
Let $V$ be the vector space with basis $v_1,\ldots,v_n$. The \emph{geometric representation} $W_Q\rightarrow \textrm{GL}(V)$ of $W_Q$ is defined by reflections 
$$s_iv_j:=v_j+(m_{ij}-2\delta_{ij})v_i.$$
The \emph{contragradient of the geometric representation} $W_Q\rightarrow \textrm{GL}(V)$ is then
$$s_iv_j^*=\left\{\begin{array}{cl} v_j^* & i\neq j\\ -v_j^*+\sum_{t\neq j}m_{tj}v_t^* & i=j\end{array}\right.$$
\end{dfa} 

The Grothendieck group $K_0(\Dd^b(\fl \Lambda))$ has a basis consisting of the simple $\Lambda$-modules, and $K_0(\Kk^b(\proj\Lambda))$ has a basis consisting of the indecomposable projective $\Lambda$-modules. 
\begin{prop}
Let $\Lambda$ be the complete preprojective algebra of non-Dynkin type.
\begin{enumerate}
\item The Coxeter group $W_Q$ acts on $K_0(\Dd^b(\fl \Lambda))$ by $w\mapsto [-\lten_\Lambda I_w]$ as the geometric representation.
\item The Coxeter group $W_Q$ acts on $K_0(\mathcal{K}^b(\proj \Lambda))$ by $w\mapsto [-\lten_\Lambda I_w]$ as the contragradient of the geometric representation.
\end{enumerate}
\end{prop}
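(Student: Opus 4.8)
The plan is to deduce (1) directly from Lemma \ref{dimvectlemma} and to deduce (2) from (1) by means of the Euler pairing between $K_0(\Kk^b(\proj\Lambda))$ and $K_0(\Dd^b(\fl\Lambda))$.

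For (1), I would identify $K_0(\Dd^b(\fl\Lambda))$ with the space $V$ of the Definition of the geometric representation by sending $[S_j]$ to $v_j$. Under this identification Lemma \ref{dimvectlemma} says precisely that the automorphism $[-\lten_\Lambda I_i]$ coincides with the reflection $s_i=\rho(s_i)$ of the geometric representation $\rho$. It then remains to check that for a reduced expression $\ww=s_{u_1}\dots s_{u_l}$ of $w$ one has $[-\lten_\Lambda I_w]=\rho(s_{u_1})\cdots\rho(s_{u_l})=\rho(w)$ on $K_0$. This follows from the isomorphism $I_w\simeq I_{u_l}\lten_\Lambda\cdots\lten_\Lambda I_{u_1}$ in $\Dd(\Mod\Lambda)$ obtained in the proof of Proposition \ref{layerimagesimple} by iterating Proposition \ref{basicprop}(e): by associativity of $\lten_\Lambda$ this gives an isomorphism of functors $-\lten_\Lambda I_w\simeq(-\lten_\Lambda I_{u_1})\circ\cdots\circ(-\lten_\Lambda I_{u_l})$, hence the asserted identity. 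In particular the assignment $w\mapsto[-\lten_\Lambda I_w]$ is independent of the chosen reduced expression and defines a group homomorphism, namely the geometric representation.

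For (2), I would introduce the pairing $\langle[P],[X]\rangle:=\sum_{t\in\mathbb Z}(-1)^t\dim_k\Hom_{\Dd(\Mod\Lambda)}(P,X[t])$ for $P\in\Kk^b(\proj\Lambda)$ and $X\in\Dd^b(\fl\Lambda)$; it is well defined since $\Lambda$ has finite global dimension, and $\langle[e_i\Lambda],[S_j]\rangle=\dim_k\Hom_\Lambda(e_i\Lambda,S_j)=\delta_{ij}$ shows it is perfect, with $\{[e_i\Lambda]\}$ and $\{[S_j]\}$ dual bases. By Proposition \ref{basicprop}(c), $I_w$ is a tilting module of projective dimension at most one with $\End_\Lambda(I_w)\simeq\Lambda$, so $-\lten_\Lambda I_w$ is an autoequivalence of $\Dd(\Mod\Lambda)$; it restricts to autoequivalences of $\Dd^b(\fl\Lambda)$ (the cohomology of $S\lten_\Lambda I_w$ has finite length for every simple $S$, by Proposition \ref{basicprop}(d) together with d\'evissage, and the simples generate $\Dd^b(\fl\Lambda)$) and of $\Kk^b(\proj\Lambda)$ (since $e_i\Lambda\lten_\Lambda I_w=e_iI_w$ is a summand of $I_w$, hence of projective dimension at most one). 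Being an equivalence commuting with the shift, $-\lten_\Lambda I_w$ preserves the pairing, i.e. $\langle[P\lten_\Lambda I_w],[X\lten_\Lambda I_w]\rangle=\langle[P],[X]\rangle$. Therefore, with respect to the dual bases, the automorphism of $K_0(\Kk^b(\proj\Lambda))$ attached to $w$ is the inverse transpose of the one attached to $w$ on $K_0(\Dd^b(\fl\Lambda))$; by (1) the latter is $\rho(w)$, so the former is $w\mapsto(\rho(w)^{t})^{-1}=\rho(w^{-1})^{t}$, which is by definition the contragredient of the geometric representation. A direct check that the explicit reflection formulas of the Definition describe this map (equivalently $s_i\mapsto\rho(s_i)^{t}$, using $s_i^2=1$) finishes the argument.

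All steps are routine; the one point deserving genuine care is the verification that $-\lten_\Lambda I_w$ actually restricts to an autoequivalence of $\Dd^b(\fl\Lambda)$ (so that it induces an automorphism of that Grothendieck group at all), and after that the bookkeeping of transposes and inverses needed to recognize the induced action on $K_0(\Kk^b(\proj\Lambda))$ as the contragredient representation — that is where a sign or an inversion is easiest to get wrong.
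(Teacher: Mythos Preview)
Your proof of (1) is exactly the paper's: both say it follows directly from Lemma~\ref{dimvectlemma}, and your extra line about the factorisation $I_w\simeq I_{u_l}\lten_\Lambda\cdots\lten_\Lambda I_{u_1}$ just makes explicit what the paper takes for granted.

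For (2) the approaches genuinely differ. The paper does not prove anything: it simply cites \cite[Theorem 6.6]{IR} (noting that the extended-Dynkin hypothesis there is not actually used). You instead give a self-contained argument via the Euler pairing between $K_0(\Kk^b(\proj\Lambda))$ and $K_0(\Dd^b(\fl\Lambda))$: since $-\lten_\Lambda I_w$ is an autoequivalence it preserves this perfect pairing, so its matrix on the projective side is the inverse transpose of its matrix on the finite-length side, which by (1) is $\rho(w)$; hence the projective side carries $(\rho(w)^t)^{-1}=\rho^*(w)$. This is correct and has the virtue of explaining \emph{why} the two actions are dual, rather than importing the result. The paper's route is shorter but opaque.

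One small wrinkle: your justification that $-\lten_\Lambda I_w$ restricts to $\Dd^b(\fl\Lambda)$ invokes Proposition~\ref{basicprop}(d), but that statement only says one of the two Tor groups vanishes, not that the remaining one has finite length. The cleanest fix is to argue for a single $I_i$ via the triangle $S_j\lten_\Lambda I_i\to S_j\to S_j\lten_\Lambda S_i$ used in the proof of Lemma~\ref{dimvectlemma} (where $S_j\lten_\Lambda S_i\simeq D\mathbf{R}\Hom_\Lambda(S_j,S_i)$ is visibly finite-dimensional), then d\'evissage and iterate over the factors of $I_w$. This is a cosmetic correction; the paper itself takes the restriction for granted in the sentence preceding Lemma~\ref{dimvectlemma}.
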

\begin{proof}
(1) This follows directly from Lemma \ref{dimvectlemma}.

(2) This is shown in \cite[Theorem 6.6]{IR}. It is assumed in \cite{IR} that $Q$ is extended Dynkin, but this assumption is not used in the proof for this statement.
\end{proof}

\subsection{Reflection functors and ideals $I_i$}

In this subsection, we state some basic properties of the first layers. In particular we show that the equivalence $-\lten_\Lambda I_i$, when $Q$ is not Dynkin, can be interpreted as a reflection functor of the category $\Dd^b(\fl\Lambda)$.

\begin{lema}\label{lemmelayer1}
 Let $Q$ be an acyclic quiver, and $\Lambda=\Lambda_Q$. Let $c\in W_Q$ be a Coxeter element admissible with respect to the orientation of $Q$. Let $i\in Q_0$ be a source of $Q$ and $R_i^{-}:\mod kQ\to\mod kQ'$ be the reflection functor for $Q':=\mu_i(Q)$.
Then we have the following :
\begin{enumerate}
 \item $\Lambda/I_c\simeq kQ$  in $\mod \Lambda$. So we view $kQ$-modules as $\Lambda$-modules annihilated by $I_c$.

\item If $\ww'$ is a subsequence of $\ww$, and if $\ww$ is a subsequence of $c\ww'$ (where $\ww'$, $c\ww'$ and $\ww$ are reduced expressions in $W_Q$), then $I_{\ww'}/I_\ww$ is a $kQ$-module.

\item $I_i/I_{cs_i}$ is a $kQ'{}^{op}\ten kQ$-module and isomorphic to $\tau^{-1}P_i\oplus kQ/P_i=R_i^{-}(kQ)$ as a $kQ'{}^{op}\ten kQ$-module, where $P_i=e_ikQ$ is the indecomposable projective $kQ$-module associated to $i$ and $\tau$ is the AR-translation of $\mod kQ$.

\end{enumerate}

\end{lema}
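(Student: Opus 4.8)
The plan is to treat the three parts more or less independently, building up from the structure of the preprojective algebra as a quotient of $k\overline Q$.

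For (1): since $c$ is admissible with respect to the orientation of $Q$, one can order the vertices so that $c = s_{u_1}\cdots s_{u_n}$ with arrows of $Q$ only going from $u_a$ to $u_b$ when $a<b$. The ideal $I_c = I_{u_n}\cdots I_{u_1}$ is computed by noting that, in $\Lambda = k\overline Q/\langle\sum(aa^*-a^*a)\rangle$, the two-sided ideal $I_c$ is generated by all paths that "use" a vertex $u_j$ after having left a vertex $u_i$ with $i\le j$ in a way that forces a $*$-arrow to appear; more precisely, I would show $I_c$ is exactly the ideal generated by all arrows $a^*$ (the arrows of $\overline Q$ not in $Q$) together with a suitable set of paths, so that $\Lambda/I_c \simeq kQ$. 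Concretely: because $c$ is admissible, $\Lambda/I_c$ has a basis indexed by paths in $Q$ (no $*$-arrows survive), and this matches $kQ$. This is essentially the known fact that for an admissible Coxeter element the algebra $\Lambda_c$ is hereditary and isomorphic to $kQ$; I would cite or reprove it via Proposition \ref{basicprop}(b) and a dimension count, using that $\dim_k\Lambda/I_c = \dim_k kQ$ (both equal the number of paths in $Q$, which for $Q$ acyclic is the same as $\dim_k\Lambda e_i / I_c e_i$ summed over $i$).

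For (2): the hypothesis says $\ww'\subseteq\ww\subseteq c\ww'$ as reduced words. Then $I_\ww \subseteq I_{\ww'}$ by Proposition \ref{basicprop}(b) (repeatedly), so $I_{\ww'}/I_\ww$ is a well-defined $\Lambda$-bimodule, and I must check it is annihilated on both sides by $I_c$, hence by (1) is a $kQ$-bimodule (in particular a $kQ$-module on whichever side is relevant). For annihilation by $I_c$ on the appropriate side: from $\ww \subseteq c\ww'$ we get $I_{c\ww'} \subseteq I_\ww$, i.e. $I_{\ww'}I_c \subseteq I_\ww$ (using $I_{c\ww'} = I_{\ww'}I_c$, which holds because $c\ww'$ is reduced and ideal products along reduced words compose — Proposition \ref{basicprop}(e) in the non-Dynkin case, and the Dynkin case reduces to it via Lemma \ref{extend support} exactly as in the proof of Proposition \ref{layerimagesimple}). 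Hence $(I_{\ww'}/I_\ww)\cdot I_c = 0$ in the quotient, which is what we want on the right. So $I_{\ww'}/I_\ww$ is a $\Lambda/I_c = kQ$-module on the right; a symmetric argument using $\ww'\subseteq\ww$ handles the left action if needed.

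For (3): apply (2) with $\ww' = s_i$ and $\ww = c$ — note $s_i \subseteq c$ (since $c$ contains every generator once) and $c \subseteq cs_i$ is false, so instead I take $\ww'=$ the empty word? No: the right setup is $\ww' = s_i$, and we want $I_i/I_{cs_i}$, so $\ww = cs_i$; we need $s_i \subseteq cs_i$ (clear) and $cs_i \subseteq c\cdot s_i$ (clear, it's an equality), so (2) gives that $I_i/I_{cs_i}$ is a $kQ$-module on one side; the symmetric bimodule statement comes from noting $I_i/I_{cs_i}$ is also annihilated by $I_{c'}$ on the left where $c'$ is the relevant Coxeter element of $Q' = \mu_i(Q)$ (here one uses that $s_i c$ is a reduced word conjugate in the right sense, and $\Lambda_{Q'} = \Lambda_Q$ as algebras with $I_i$ being a $\Lambda_{Q'}$-$\Lambda_Q$-bimodule). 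Granting the bimodule structure, it remains to identify $I_i/I_{cs_i}$ with $R_i^-(kQ) = \tau^{-1}P_i \oplus kQ/P_i$. I would do this by a direct computation: $I_i = \Lambda(1-e_i)\Lambda$, and $I_i/I_i I_i \cdots$ — better, use that $e_j I_i = e_j \Lambda$ for $j \ne i$ (Proposition \ref{basicprop}(b)), so the summand $I_i(1-e_i)/I_{cs_i}(1-e_i)$ is just $kQ(1-e_i) = kQ/P_i$, giving the $kQ/P_i$ part; and $e_i I_i / (I_{cs_i}$-part$)$ computes to $\tau^{-1}P_i$ by recognizing that $\tau^{-1}P_i = DkQ$ restricted appropriately, or by using the standard short exact sequence $0 \to P_i \to \bigoplus_{a: i\to j}P_j \to \tau^{-1}P_i \to 0$ and matching it with the minimal projective presentation of $e_iI_i$ over $kQ$ coming from the arrows $a^*: j \to i$ of $\overline Q$. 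The $kQ'$-module structure on the $\tau^{-1}P_i$-summand is then forced, and one checks it agrees with $R_i^-$ applied to the identity, i.e. $R_i^-(kQ) = kQ \otimes_{kQ'}M$ for $M$ as in Section 1.3.

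The main obstacle I expect is part (3): specifically, pinning down that $I_i/I_{cs_i}$ is a $kQ'$-$kQ$-\emph{bimodule} (not merely a one-sided module) and that the explicit identification with $R_i^-(kQ)$ is compatible with both actions simultaneously. Parts (1) and (2) are essentially bookkeeping with Proposition \ref{basicprop}, but (3) requires genuinely identifying a bimodule — the cleanest route is probably to observe that $-\lten_\Lambda I_i$ is an equivalence $\Dd^b(\fl\Lambda_Q)\to\Dd^b(\fl\Lambda_{Q'})$ (using $\Lambda_Q = \Lambda_{Q'}$ and Proposition \ref{basicprop}(c)) whose action on $kQ$-modules lands in $kQ'$-modules by parts (1)–(2), and which must therefore be (a shift-free, exact version of) the reflection functor $R_i^-$ because it agrees with it on the Grothendieck group by Lemma \ref{dimvectlemma}/Theorem \ref{realroot}; then the underlying bimodule of $-\otimes_\Lambda I_i$ restricted to $kQ$ is $I_i/I_i I_c = I_i/I_{cs_i}$ (here $I_iI_c = I_{s_i c}$... one must be slightly careful whether it is $I_{cs_i}$ or $I_{s_ic}$, but these coincide as ideals by Proposition \ref{basicprop}(a) since $cs_i$ and $s_ic$ need not be equal in $W_Q$ — actually here one wants $I_i \cdot I_c$, and $I_i I_c = I_{?}$; resolving this indexing point is part of the work), forcing the isomorphism with $R_i^-(kQ)$.
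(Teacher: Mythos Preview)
Your plan is essentially the same as the paper's proof, so only minor course-corrections are needed.

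For (1), the paper simply cites \cite[Propositions II.3.2 and II.3.3]{Bua2}; your direct argument is fine but unnecessary.

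For (2), the paper does exactly what you do: from $\ww\subseteq c\ww'$ one has $I_{\ww'}I_c=I_{c\ww'}\subseteq I_\ww$, so $(I_{\ww'}/I_\ww)I_c=0$. Note that the equality $I_{c\ww'}=I_{\ww'}I_c$ is \emph{definitional} (recall $I_\ww:=I_{u_l}\cdots I_{u_1}$ for $\ww=s_{u_1}\cdots s_{u_l}$), so you do not need Proposition~\ref{basicprop}(e) or any reduction to the non-Dynkin case here. Also, the statement of (2) only asserts a right $kQ$-module structure, so your ``symmetric argument'' for the left action is not needed.

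For (3), your first route---split off the $j\neq i$ summands using $e_jI_i=e_j\Lambda$ and $e_jI_{cs_i}=e_jI_c$ to obtain $kQ/P_i$, then identify the $e_i$-part via the projective presentation
\[
e_i\Lambda\longrightarrow\bigoplus_{a\in\bar Q_1,\,s(a)=i}e_{t(a)}\Lambda\longrightarrow e_iI_i\longrightarrow 0
\]
tensored with $\Lambda/I_c$---is exactly the paper's argument. The key observation (which you should make explicit) is that since $i$ is a source of $Q$, the set $\{a\in\bar Q_1:s(a)=i\}$ equals $\{a\in Q_1:s(a)=i\}$, so after tensoring the sequence becomes
\[
P_i\longrightarrow\bigoplus_{a\in Q_1,\,s(a)=i}P_{t(a)}\longrightarrow e_i(I_i/I_{cs_i})\longrightarrow 0,
\]
which identifies the cokernel with $\tau^{-1}P_i$. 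Your indexing worry at the end is resolved by noting that applying $-\otimes_\Lambda\Lambda/I_c$ gives $e_iI_i/e_iI_iI_c$, and $I_iI_c=I_{cs_i}$ by definition; the equality $I_cI_i=I_iI_{c'}$ (with $c'=s_ics_i$) is what later gives the left $kQ'$-structure. The alternative route via Grothendieck groups and Lemma~\ref{dimvectlemma} is unnecessary and less direct.
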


\begin{proof}
(1) This is Propositions II.3.2 and II.3.3 of \cite{Bua2}.

(2) Since $I_{\ww'}I_c=I_{c\ww'}\supset I_\ww$, we have that $I_{\ww'}/I_\ww$ is annihilated by $I_c$.

(3)  Note that by Proposition \ref{basicprop} (b) we have $e_jI_i=e_j\Lambda$ and $e_jI_{cs_i}=e_jI_iI_c=e_jI_c$ if $j\neq i$. Therefore by (1) it is enough to prove that $e_i I_i/I_{cs_i}\simeq \tau^{-1}(e_i kQ)$.

The projective resolution of $e_iI_i$ in $\mod \Lambda$ has the form:
$$\xymatrix{(*) &e_i\Lambda\ar[r] & \bigoplus_{a\in \bar{Q}_1, s(a)=i}e_{t(a)}\Lambda\ar[r] & e_i I_i\ar[r] & 0.}$$

Applying the functor $-\ten_{\Lambda}\dfrac{\Lambda}{I_c}$ to the exact sequence $(*)$, we get an exact sequence
$$(**)\xymatrix{e_i\dfrac{\Lambda}{I_c}\ar[r] & \bigoplus_{a\in \bar{Q}_1, s(a)=i}e_{t(a)}\dfrac{\Lambda}{I_c}\ar[r] & e_i\dfrac{I_i}{I_iI_{c}}=e_i\dfrac{I_i}{I_{cs_i}}\ar[r] & 0}.$$
Since $i$ is a source in $Q$, we have the set equality $$\{a\in \bar{Q}_1, \textrm{ with } s(a)=i\}=\{a\in Q_1, \textrm{ with } s(a)=i\}.$$ Therefore by (1) the  exact sequence $(**)$ is 
$$\xymatrix{e_i kQ\ar[r] & \bigoplus_{a\in Q_1, s(a)=i}e_{t(a)}kQ\ar[r] & e_i\dfrac{I_i}{I_{cs_i}}\ar[r] & 0}.$$ Hence we have $e_i\dfrac{I_i}{I_{cs_i}}\simeq \tau^{-}(e_i kQ)$.

\end{proof}
 
From Lemma \ref{lemmelayer1} we deduce the following result which gives another interpretation of the reflection functor.

\begin{cora}\label{commutativediagram}
 Let $Q$ be an acyclic quiver  and $\Lambda=\Lambda_Q$.
\begin{enumerate}
\item Let $i\in Q_0$ be a sink of $Q$. Let $Q':=\mu_i(Q)$. Then the following diagram commutes
$$\xymatrix@C=1.5cm{\mod kQ/ [e_iDkQ] \ar[r]^{R_i^{-}} \ar@{^(->}[d] & \mod kQ'/ [e_i kQ']\ar@{^(->}[d]\\ \fl \Lambda\ar[r]_{-\ten_{\Lambda}I_i} & \fl \Lambda},$$ where the vertical functors are the natural inclusions.
If $Q$ is not Dynkin, then the following diagram commutes
$$\xymatrix@C=1.5cm{\mod kQ/ [e_iDkQ] \ar[r]^{R_i^{-}} \ar@{^(->}[d] & \mod kQ'/ [e_i kQ']\ar@{^(->}[d]\\ \Dd^b(\fl \Lambda)\ar[r]_{-\lten_{\Lambda}I_i} & \Dd^b(\fl \Lambda)},$$ where the vertical functors are the natural inclusions.

\item Let $c=s_{u_1}\cdots s_{u_n}$ be a Coxeter element and
$C^{-}:=R_{u_n}^{-}\circ\cdots\circ R_{u_1}^{-}:\mod kQ\to\mod kQ$ the Coxeter functor. Then the following diagram commutes
$$\xymatrix@C=1.5cm{\mod kQ/ [DkQ] \ar[r]^{C^{-}} \ar@{^(->}[d] & \mod kQ/ [kQ]\ar@{^(->}[d]\\ \fl \Lambda\ar[r]_{-\ten_{\Lambda}I_c} & \fl \Lambda},$$ where the vertical functors are the natural inclusions.
If $Q$ is not Dynkin, then the following diagram commutes
$$\xymatrix@C=1.5cm{\mod kQ/ [DkQ] \ar[r]^{C^{-}} \ar@{^(->}[d] & \mod kQ/ [kQ]\ar@{^(->}[d]\\ \Dd^b(\fl \Lambda)\ar[r]_{-\lten_{\Lambda}I_c} & \Dd^b(\fl \Lambda),}$$ where the vertical functors are the natural inclusions.
In particular we have $I_{c^l}/I_{c^{l+1}}\simeq \tau^{-l}(kQ).$ 
 \end{enumerate}

\end{cora}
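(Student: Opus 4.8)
\emph{Overview and Part~(1), abelian case.} The plan is to reduce everything to Lemma~\ref{lemmelayer1} by a change of rings, and then to iterate for part~(2). Throughout, $kQ$-modules are identified, via Lemma~\ref{lemmelayer1}(1), with the $\Lambda$-modules annihilated by $I_c$ for a suitable admissible Coxeter element $c$ of $Q$, and $\mod kQ/[e_iDkQ]$, $\mod kQ'/[e_ikQ']$ are regarded as full subcategories of $\mod kQ$, $\mod kQ'$ as in Section~1.3. For part~(1), choose $c$ so that $i$ occurs last (possible as $i$ is a sink); then $c':=s_ics_i$ is admissible for $Q'=\mu_i(Q)$, with $i$ first. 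For a $kQ$-module $X$, the change-of-rings isomorphism gives $X\ten_\Lambda I_i\simeq X\ten_{kQ}\bigl((\Lambda/I_c)\ten_\Lambda I_i\bigr)=X\ten_{kQ}(I_i/I_cI_i)$, and one checks $I_cI_i=I_{s_ic}=I_{c's_i}$. Applying Lemma~\ref{lemmelayer1}(3) to $Q'$ (for which $i$ is a source and $\mu_i(Q')=Q$) identifies the $kQ$-$kQ'$-bimodule $I_i/I_{c's_i}=I_i/I_cI_i$ with $M:=\tau^{-1}(e_ikQ')\oplus kQ'/e_ikQ'$, which by Section~1.3 (resting on \cite{BGP,APR}) is precisely the bimodule with $R_i^-=-\ten_{kQ}M$. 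Hence $X\ten_\Lambda I_i\simeq R_i^-(X)$; since $R_i^-$ is, by Section~1.3, an equivalence $\mod kQ/[e_iDkQ]\to\mod kQ'/[e_ikQ']$, and the vertical maps are the obvious full exact embeddings, the square commutes.

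\emph{Part~(1), derived case ($Q$ non-Dynkin).} By Proposition~\ref{basicprop}(c), $I_i$ is a tilting $\Lambda$-module of projective dimension $\le1$ with $\End_\Lambda(I_i)\simeq\Lambda$, so $-\lten_\Lambda I_i$ is an autoequivalence of $\Dd^b(\fl\Lambda)$; given the abelian case, commutativity follows once one knows $X\lten_\Lambda I_i\simeq X\ten_\Lambda I_i$ for $X\in\mod kQ/[e_iDkQ]$. Filtering $X$ by simples, one has $\Tor^\Lambda_1(S_j,I_i)=0$ for $j\ne i$ by combining Lemma~\ref{dimvectlemma} (which shows $S_j\ten_\Lambda I_i\ne0$) with Proposition~\ref{basicprop}(d), while the contribution of the factor $S_i$ is killed by the hypothesis that $X$ has no injective summand at the sink $i$. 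This vanishing is the one point that really needs care.

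\emph{Part~(2).} Write $I_c=I_{u_n}\cdots I_{u_1}$. By Proposition~\ref{basicprop}(e) iterated (reducing the Dynkin case to a non-Dynkin overquiver containing $Q$ as a full subquiver via Lemma~\ref{extend support}, exactly as in the proofs above), $-\ten_\Lambda I_c\simeq(-\ten_\Lambda I_{u_1})\circ\cdots\circ(-\ten_\Lambda I_{u_n})$ on the modules in question. Admissibility of $c$ makes $u_n$ a sink of $Q$, then $u_{n-1}$ a sink of $\mu_{u_n}(Q)$, and so on, with $\mu_{u_1}\cdots\mu_{u_n}(Q)=Q$; so applying part~(1) repeatedly along this sequence identifies the composite with the Coxeter functor $C^-$, as functors $\mod kQ/[DkQ]\to\mod kQ/[kQ]$ in the abelian case, respectively on $\Dd^b(\fl\Lambda)$ in the non-Dynkin case.

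\emph{The final assertion.} Induct on $l$: the case $l=0$ is Lemma~\ref{lemmelayer1}(1). For the step, apply $-\ten_\Lambda I_c$ to $0\to I_{c^l}\to I_{c^{l-1}}\to I_{c^{l-1}}/I_{c^l}\to0$; Proposition~\ref{basicprop}(e) gives $I_{c^m}\ten_\Lambda I_c\simeq I_{c^{m+1}}$ compatibly with the inclusions, so the induced cokernel sequence yields $(I_{c^{l-1}}/I_{c^l})\ten_\Lambda I_c\simeq I_{c^l}/I_{c^{l+1}}$. Since $\tau^{-(l-1)}(kQ)$ is preprojective, hence has no injective summand and lies in $\mod kQ/[DkQ]$, part~(2) identifies $(I_{c^{l-1}}/I_{c^l})\ten_\Lambda I_c\simeq\tau^{-(l-1)}(kQ)\ten_\Lambda I_c$ with $C^-(\tau^{-(l-1)}kQ)=\tau^{-l}(kQ)$. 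In the whole argument the genuine obstacle is the derived-versus-abelian comparison of the second paragraph; everything else is careful bookkeeping of which quiver and which factor category one is in at each stage, all of which is already encapsulated in Lemma~\ref{lemmelayer1}.
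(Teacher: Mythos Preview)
Your overall strategy coincides with the paper's: identify $kQ\otimes_\Lambda I_i$ with the bimodule defining $R_i^-$ via Lemma~\ref{lemmelayer1}(3) and change of rings, then iterate for part~(2). The paper's entire proof of~(2) is ``direct consequence of~(1)'', so your more explicit iteration and your inductive argument for $I_{c^l}/I_{c^{l+1}}\simeq\tau^{-l}(kQ)$ are welcome elaborations.

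The genuine gap is precisely where you flag it. Filtering $X$ by simples does not give $\Tor_1^\Lambda(X,I_i)=0$ as stated: an $X\in\mod kQ/[e_iDkQ]$ can have $S_i$ as a \emph{composition factor} without having it as a \emph{summand} (take $X=e_ikQ$, whose top is $S_i$), and $\Tor_1^\Lambda(S_i,I_i)\simeq S_i\neq0$. So ``the contribution of the factor $S_i$ is killed by the hypothesis that $X$ has no injective summand'' is not justified by the filtration alone; the long exact sequence in $\Tor$ does not collapse for that reason.

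The paper sidesteps this by staying at the bimodule level: it asserts $kQ\lten_\Lambda I_i\simeq kQ\otimes_\Lambda I_i$ via Proposition~\ref{layerimagesimple}, then repeats the change-of-rings argument verbatim. Concretely, $kQ=\bigoplus_jL^j_c$ with $L^j_c\simeq e_{u_j}kQ$, and $L^j_c\lten_\Lambda I_i=S_{u_j}\lten_\Lambda I_{u_{j-1}}\lten_\Lambda\cdots\lten_\Lambda I_{u_1}\lten_\Lambda I_i$ is the top layer of the word $s_is_{u_1}\cdots s_{u_j}$, which is reduced for every $j$ (for $j<n$ the letters are distinct; for $j=n$ one uses that the sink $i$ cannot begin any reduced expression of $c$), hence a module by Proposition~\ref{layerimagesimple}. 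If you want to salvage your module-by-module route instead, observe that $0\to I_i\to\Lambda\to S_i\to0$ together with the $2$-CY property gives
\[
\Tor_1^\Lambda(X,I_i)\ \simeq\ \Tor_2^\Lambda(X,S_i)\ \simeq\ D\Ext^2_\Lambda(X,S_i)\ \simeq\ \Hom_\Lambda(S_i,X),
\]
and the right-hand side vanishes exactly when the simple injective $S_i$ is not a summand of $X$ --- which is the hypothesis you have.
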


\begin{proof} 
(1) Denote by $c$ the Coxeter element admissible with respect to the orientation of $Q$, and by $c'=s_ics_i$ the Coxeter element admissible with respect to the orientation of $Q'$.
We have the following isomorphisms in $\fl \Lambda$. 
$$\begin{array}{rcll} kQ\ten_\Lambda I_i& \simeq& \Lambda/I_c\ten_\Lambda I_i &\textrm{by Lemma \ref{lemmelayer1} (1)}\\ & \simeq & I_i/I_cI_i \simeq I_i/I_i I_{c'} & \\ & \simeq & \tau^{-1}P_i\oplus kQ/P_i& \textrm{ by Lemma \ref{lemmelayer1} (3)}\end{array}$$
Thus on $\mod kQ/ [e_iDkQ]$, we have $-\ten_\Lambda I_i=-\ten_{kQ}(kQ\ten_\Lambda I_i)\simeq-\ten_{kQ}(\tau^{-1}P_i\oplus kQ/P_i)=R_i^{-}$.

The latter assertion can be shown quite similarly since we have $kQ\lten_\Lambda I_i\simeq kQ\ten_\Lambda I_i$ by Proposition \ref{layerimagesimple}.

(2) This is a direct consequence of (1).
\end{proof}

\section{Tilting modules and $c$-sortable words}

In this section $Q$ is a finite acyclic quiver, $\Lambda$ is the complete preprojective algebra associated with $Q$ and $c$ a Coxeter element admissible with respect to the orientation of $Q$. The purpose of this section is to investigate the layers for words $\ww$ satisfying a certain property called $c$-sortable.  

\begin{dfa}\cite{Rea}
 Let $c$ be a Coxeter element of the Coxeter group $W_Q$. 
Usually we fix a reduced expression of $c$ and regard $c$ as a reduced word.
An element $w$ of $W_Q$ is called \emph{c-sortable} if there exists a reduced expression $\ww$ of $w$ of the form $\ww=c^{(0)}c^{(1)}\ldots c^{(m)}$ where all $c^{(t)}$ are subwords of $c$ whose supports satisfy 
$$supp(c^{(m)})\subseteq supp(c^{(m-1)})\subseteq \ldots \subseteq supp(c^{(1)})\subseteq supp(c^{(0)})\subseteq Q_0.$$
For $i\in Q_0$, if $s_i$ is in the support of $c^{(t)}$, by abuse of notation, we will write $i\in c^{(t)}$.
\end{dfa}

Then $c$-sortability does not depend on the choice of reduced expression of $c$.
It is immediate that the expression $\ww=c^{(0)}c^{(1)}\ldots c^{(m)}$ is unique for any $c$-sortable element of $W_Q$ \cite{Rea}.

Let $w$ be an element of $W_Q$, and $\ww=s_{u_1}\ldots s_{u_l}$ a reduced expression. Recall from Section 1 that for $j=1, \ldots, l$ the layer $L^j_\ww$ is defined to be the $\Lambda$-module:
$$ L_\ww^j= e_{u_j}\frac{I_{u_k}\ldots I_{u_1}}{I_{u_j}\ldots
  I_{u_1}}=\frac{I_{u_{j-1}}\ldots I_{u_{1}}}{I_{u_j}\ldots I_{u_1}}$$
where $k<j$ satisfies $u_k=u_j$ and is maximal with this property.

\begin{exa}\label{calculation of L and M}
Let $Q$ be the quiver $\xymatrix@R=.1cm@C=.5cm{&2\ar[dr] &
  \\1\ar[rr]\ar[ur] &&3}$ and $\ww:=s_1s_2s_3s_1s_2s_1$.

The standard cluster-tilting object $M_\ww$ in $\Sub \Lambda_w$ has the
following indecomposable direct summands

\begin{eqnarray*}M^1_\ww ={\bsm 1\esm},\ M^2_\ww={\bsm 2\\1\esm},\ M^3_\ww={\bsm
 &3&&\\1&&2&\\&&&1\esm},\ M^4_\ww={\bsm
&1&&&\\2&&3&&\\&1&&2&\\&&&&1\esm},\ M^5_\ww={\bsm
&&2&&&&\\&3&&1&&&\\1&&2&&3&&\\&&&1&&2&\\&&&&&&1\esm},\ M^6_\ww={\bsm
  &&&1&&&\\&&2&&3&&\\&3&&1&&2&\\1&&&&&&1\esm}.\end{eqnarray*}

Then we can easily compute the layers $L^1_\ww,\ldots, L^6_\ww$. They are the indecomposable summands of the $M^i_\ww$ as $kQ$-modules: 

\begin{eqnarray*}L^1_\ww={\bsm 1\esm},\quad  L^2_\ww={\bsm 2\\1\esm}, \quad L_\ww^3={\bsm
 &3&&\\1&&2&\\&&&1\esm},\quad  L_\ww^4={\bsm
2&&3&&\\&1&&2&\\&&&&1\esm},\quad  L_\ww^5={\bsm
\\&3&&&&&\\1&&2&&3&&\\&&&1&&2&\\&&&&&&1\esm},
\quad \textrm{and}\quad L^6_\ww={\bsm
 3\\1\esm}.\end{eqnarray*}
\end{exa}

Here is a nice characterization of $c$-sortable words.

\begin{thma}
 Let $w$ be an element of $W_Q$ and $\ww=s_{u_1} s_{u_2}\ldots s_{u_l}$ be a reduced expression of $w$. Then we have the following:
\begin{enumerate}
 \item if there exists a Coxeter element $c$ such that $w$ is $c$-sortable and $\ww$ is the $c$-sortable expression of $w$, then $L^j_\ww$ is in $\mod kQ$ for all $j=1,\ldots,l$, where $Q$ is admissible for the Coxeter element $c$;
\item if for all $j=1,\ldots,l$ the layer $L^j_\ww$ is in $\mod kQ$ for a certain orientation of $Q$, then $w$ is $c$-sortable, where $c$ is the  Coxeter element admissible for the orientation of $Q$.
\end{enumerate}
\end{thma}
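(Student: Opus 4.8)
The plan is to prove both implications by induction on the length $l$ of $\ww$, using Corollary~\ref{commutativediagram} to reduce a leading letter of $\ww$ whenever it is a source of $Q$. The key point is the following reduction step. Suppose $u_1$ is a source of $Q$, set $Q':=\mu_{u_1}(Q)$ and $\ww':=s_{u_2}\ldots s_{u_l}$, which is a reduced word in $W_{Q'}$. Then for $j\geq 2$ the layer $L^j_\ww$ (computed over $\Lambda$) satisfies $L^j_\ww\simeq L^{j-1}_{\ww'}\ten_\Lambda I_{u_1}$ by Proposition~\ref{layerimagesimple}; since $L^{j-1}_{\ww'}$ is a $kQ'$-module (by induction, if $\ww'$ is $c'$-sortable for the restriction $c'$) and $-\ten_\Lambda I_{u_1}$ restricted to $\mod kQ'$ is the reflection functor $R^-_{u_1}$ landing in $\mod kQ$ by the first commutative diagram of Corollary~\ref{commutativediagram} (with $u_1$ a sink of $Q'$), we get that $L^j_\ww\in\mod kQ$. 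The first layer $L^1_\ww=\Lambda/I_{u_1}=S_{u_1}$ is the simple $kQ$-module at $u_1$, hence also in $\mod kQ$. Conversely, if all $L^j_\ww\in\mod kQ$, then reading off the first few layers forces the structure of a $c$-sortable word.

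For part (1) I would argue as follows. If $\ww=c^{(0)}c^{(1)}\ldots c^{(m)}$ is the $c$-sortable expression, write $c=s_{v_1}\ldots s_{v_n}$ admissibly for $Q$; the leading letter $u_1=v_1$ is a source of $Q$ (since $c$ is admissible and $v_1$ is first). If $v_1\in c^{(0)}$ (which it is unless $c^{(0)}$ is a proper subword not containing $v_1$, a case handled by passing to the full subquiver on $\mathrm{supp}(c^{(0)})$ via Lemma~\ref{extend support}), then $\ww'=s_{u_2}\ldots s_{u_l}$ is the $c'$-sortable expression of some $w'\in W_{Q'}$, where $c'=s_{v_1}cs_{v_1}$ is admissible for $Q'=\mu_{v_1}(Q)$: indeed deleting the first letter of $c^{(0)}$ and conjugating by $s_{v_1}$ preserves the nested-support form, as is standard for sortable words (cf.\ \cite{Rea}). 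By induction over $Q'$ each $L^{j}_{\ww'}\in\mod kQ'$, and the reduction step above promotes this to $L^{j+1}_\ww\in\mod kQ$, while $L^1_\ww=S_{u_1}\in\mod kQ$ directly. The base case $l=0$ is vacuous.

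For part (2), assume all $L^j_\ww\in\mod kQ$ with $c$ admissible for $Q$. Again $u_1$ is forced to be a source of $Q$: if not, one checks via Lemma~\ref{dimvectlemma}/Theorem~\ref{realroot} that $[L^1_\ww]=[S_{u_1}]$ but the reflection-functor description would force a module with support involving vertices "before" $u_1$, contradicting $L^1_\ww\in\mod kQ$ being the simple $S_{u_1}$ — more precisely, $L^1_\ww=S_{u_1}$ as a $\Lambda$-module and this being a $kQ$-module is automatic, so instead the constraint on $u_1$ must come from a later layer; I expect the cleanest route is: $L^2_\ww\simeq S_{u_2}\ten_\Lambda I_{u_1}$, and for this to be a $kQ$-module one needs $I_{u_1}$ to act as the reflection functor $R^-_{u_1}$ on $\mod kQ$, which by Corollary~\ref{commutativediagram} requires $u_1$ to be a sink of $Q'=\mu_{u_1}(Q)$, equivalently a source of $Q$. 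Granting $u_1$ is a source, apply the commutative diagram backwards: $L^j_\ww\in\mod kQ$ for $j\geq 2$ translates, via the equivalence $R^-_{u_1}$ (equivalently $-\ten_\Lambda I_{u_1}$), into $L^{j-1}_{\ww'}\in\mod kQ'$ for $\ww'=s_{u_2}\ldots s_{u_l}$ over $Q'=\mu_{u_1}(Q)$; by induction $w'$ is $c'$-sortable for $c'=s_{u_1}cs_{u_1}$, and prepending $s_{u_1}$ (which is the first letter of $c$) recovers a $c$-sortable expression for $w$ — one checks the nested-support condition is restored because $u_1$ being a source of $Q$ means $s_{u_1}$ can legitimately head the first Coxeter block.

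The main obstacle I anticipate is the bookkeeping around when the leading Coxeter block $c^{(0)}$ does \emph{not} use every generator, i.e.\ the interaction between the nested supports $\mathrm{supp}(c^{(m)})\subseteq\cdots\subseteq\mathrm{supp}(c^{(0)})$ and the reduction $\mu_{u_1}$; one must restrict to the full subquiver on $\mathrm{supp}(c^{(0)})$ using Lemma~\ref{extend support} so that $u_1$ really is a source there, and then verify that the combinatorial operation "delete the first letter, conjugate $c$" sends $c$-sortable expressions to $c'$-sortable expressions and is reversible — this is a purely Coxeter-combinatorial fact (implicit in Reading's work \cite{Rea}) that needs to be stated carefully to make the induction go through in both directions simultaneously.
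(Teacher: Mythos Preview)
Your overall strategy for part~(2) matches the paper's, but there are two genuine gaps there, and your approach to part~(1) differs from the paper's.

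\medskip
\textbf{Part (1).} The paper does \emph{not} induct. It observes directly that if $\ww$ is the $c$-sortable expression and $k<j$ is maximal with $u_k=u_j$, then $s_{u_1}\ldots s_{u_j}$ is a subsequence of $c\,s_{u_1}\ldots s_{u_k}$, so Lemma~\ref{lemmelayer1}(2) applies immediately to give $L^j_\ww\in\mod kQ$. Your inductive route via Corollary~\ref{commutativediagram} can be made to work, but it is longer and requires the same bookkeeping you flag at the end (restricting to $\mathrm{supp}(c^{(0)})$, checking that $L^{j-1}_{\ww'}$ is never the simple injective $S_{u_1}$ of $kQ'$, etc.). The paper's direct argument avoids all of this.

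\medskip
\textbf{Part (2).} Here you and the paper both induct on $l$ after reducing to $\mathrm{supp}(\ww)=Q_0$ via Lemma~\ref{extend support}, and both finish with Reading's criterion $w$ $c$-sortable $\Leftrightarrow$ $s_{u_1}w$ is $s_{u_1}cs_{u_1}$-sortable. But two steps in your sketch are not yet proofs:

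\emph{(i) Why $u_1$ is a source.} You correctly note that $L^1_\ww=S_{u_1}$ gives no constraint and that the obstruction must come from a later layer, but your proposed route through $L^2_\ww$ and Corollary~\ref{commutativediagram} is circular: that corollary only \emph{tells} you what $-\ten_\Lambda I_{u_1}$ does when $u_1$ is already a source, it does not force $u_1$ to be one. The paper's argument is concrete: if $u_1$ is not a source, pick the smallest $k\geq 2$ with an arrow $u_k\to u_1$ in $Q$; then $L^k_\ww$ has simple top $S_{u_k}$ and its radical has $S_{u_1}$ in its top, so $L^k_\ww$ cannot be a $kQ$-module (the arrow $u_k\to u_1$ in $Q$ forbids $S_{u_1}$ from sitting directly under $S_{u_k}$). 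You need an argument of this kind.

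\emph{(ii) Inverting the reflection.} You write ``via the equivalence $R^-_{u_1}$'', but $R^-_{u_1}$ is only an equivalence $\mod kQ'/[e_{u_1}DkQ']\to\mod kQ/[e_{u_1}kQ]$. To pull $L^j_\ww\in\mod kQ$ back to $L^{j-1}_{\ww'}\in\mod kQ'$ you must first know $L^j_\ww\not\simeq S_{u_1}=e_{u_1}kQ$ for $j\geq 2$. The paper handles this with Theorem~\ref{realroot}(2): the classes $[L^1_\ww],\ldots,[L^l_\ww]$ are pairwise distinct, and $[L^1_\ww]=[S_{u_1}]$, so $[L^j_\ww]\neq[S_{u_1}]$ for $j\geq 2$. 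You should insert this check before applying Corollary~\ref{commutativediagram} in reverse.
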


\begin{proof}
(1)  Assume that $\ww=s_{u_1}\ldots s_{u_l}$ is a $c$-sortable word. Let $j\geq 1$, and $k$ be the (possibly) last index such that $u_j=u_k$ and $k<j$. Since $\ww$ is $c$-sortable, the word $s_{u_1}\ldots s_{u_j}$ is a subsequence of $cs_{u_1}\ldots s_{u_k}$.
Therefore we have that ${\displaystyle L^j_\ww=e_{u_j}\frac{I_{u_k}\ldots I_{u_1}}{I_{u_j}\ldots I_{u_1}}}$
is a $kQ$-module by Lemma \ref{lemmelayer1}(2).
%Therefore we have an inclusion 
%$$ e_{u_j}I_{u_1\ldots u_k}I_c=e_{u_j}I_{cu_1\ldots u_k}\subseteq
%e_{u_j}I_{u_1\ldots u_j}$$
%Hence there is a surjection
%$$\xymatrix{ e_{u_j}\frac{I_{u_1\ldots u_k}}{I_{u_1\ldots u_k}I_c}\ar@{->>}[r] &  e_{u_j}\frac{I_{u_1\ldots u_k}}{I_{u_1\ldots u_j}}=L^j_\ww.}$$
%The left term is a $kQ$-module, indeed it is isomorphic to
%$$e_{u_j}I_{u_1\ldots u_k }\ten_\Lambda \frac{\Lambda}{I_c}
%=e_{u_j}I_{u_1\ldots u_k}\ten_\Lambda kQ$$ by Lemma \ref{lemmelayer1} (1).
%Thus the right term $L^j_\ww$ is also a $kQ$-module.  

%(2) 
%Without loss of generality, we can assume that $Q$ is non-Dynkin by adding a vertex if necessary.
%If $Q$ is Dynkin, we introduce an extended Dynkin quiver $\widetilde{Q}$ such that the additional vertex is a source. And then we conclude by the above argument for non-Dynkin quivers.
%For this statement we again have to treat separately the Dynkin and the non-Dynkin case. Assume first that $Q$ is not Dynkin.

(2) We prove this assertion by induction on the length of the word $w$. For $l(w)=1$ the result is immediate.
By Lemma \ref{extend support} we can assume that the support of $\ww$ is $Q_0$.

Assume that (2) is true for any word $\ww$ of length $\leq l-1$ and let $\ww:=s_{u_1}\ldots s_{u_l}$ be a reduced expression such that $L^j_\ww$ is a $kQ$-module for all $j=1,\ldots, l$. We first show that $u_1$ is a source of $Q$. Assume it is not, then there exists $k\geq 2$ such that there is an arrow $u_k\rightarrow u_1$ in $Q$. Take the smallest such number. It is then not hard to check that the top of $L^k_\ww$ is the simple $S_{u_k}$ and that the kernel of the map $L^k_\ww\rightarrow S_{u_k}$ contains $S_{u_1}$ in its top. Thus $L^k_\ww$ is not a $kQ$-module, which is a contradiction. Therefore $u_1$ is a source of $Q$. 

We have
$L^j_\ww=L^{j-1}_{\ww'}\ten_\Lambda I_{u_1}$ for $j=2,\ldots, l$ by Proposition~\ref{layerimagesimple}, where $\ww':=s_{u_2}\ldots s_{u_l}$. By Theorem \ref{realroot} (1) we have $[L^j_\ww]=R_{u_1}\circ \ldots \circ R_{u_{j-1}}([S_{u_j}])$ in the Grothendieck group $K_0(\Dd^b(\fl\Lambda))$. By Theorem \ref{realroot} (2) we then have $[L^j_\ww]\neq [S_{u_1}]$ for $j\geq 2$. Thus $L^j_\ww$ is not isomorphic to the simple projective $e_{u_1}kQ=S_{u_1}$ if $j\geq2$. Then by Corollary \ref{commutativediagram}, we get $$L^{j-1}_{\ww'}\simeq R_{u_1}(L^j_\ww)\in \mod kQ'/[e_{u_1}DkQ']$$ where $Q'=\mu_{u_1}(Q)$. By the induction hypothesis we get that $\ww'$ is $c'$-sortable where $c'$ is the Coxeter element admissible for the orientation of $Q'$, \emph{i.e.}  $c'=s_{u_1}cs_{u_1}$.   
We get the conclusion using the following criterion which detects $c$-sortability:

\begin{lema}\cite[Lemma 2.1]{Rea}
 Let $c:=s_{u_1}\ldots s_{u_n}$ be a Coxeter element. If $l(s_{u_1}w)<l(w)$, then $w$ is $c$-sortable if and only if $s_{u_1}w$ is $s_{u_1}cs_{u_1}$-sortable. 
\end{lema}

\end{proof}

\subsection{Comparison of three series of $kQ$-modules}

To the $c$-sortable word $\ww=s_{u_1}\ldots s_{u_l}=c^{(0)}c^{(1)}\ldots c^{(m)}$, we associate two series of $kQ$-modules $T^j_\ww$ and $U^j_\ww$, and show that they coincide with $L^j_\ww$.

For $j=1,\ldots, l$, we define $kQ^{(0)}$-modules $T_\ww^j$. For $1\leq
j\leq l(c^{(0)})$, $T^j_\ww$ is the projective $kQ^{(0)}$-module $e_{u_j}kQ$. For
$j>l(c^{(0)})$, let $k$ be the maximal integer such that $k<j$ and
$u_k=u_j$. We define $T^j_\ww$ as the cokernel of the map
$$ f^j_\ww:T^k_\ww\rightarrow E$$
where $f^j_\ww$ is a minimal left $\add\{T^{k+1}_\ww,\ldots,T_\ww^{j-1}\}$-approximation.

\begin{exa}\label{calculation of T}
Let $Q$ be the quiver $\xymatrix@R=.1cm@C=.5cm{&2\ar[dr] &
  \\1\ar[rr]\ar[ur] &&3}$ and $\ww:=s_1s_2s_3s_1s_2s_1$
in Example \ref{calculation of L and M}.

Let us compute the $T^j_\ww$. For $j\leq 3$ the $T^j_\ww$ are the
projective $kQ$-modules, thus we have 
\begin{eqnarray*}T^1_\ww={\bsm 1\esm},\quad  T^2_\ww={\bsm 2\\1\esm},
  \quad \textrm{and}\quad T_\ww^3={\bsm
 &3&&\\1&&2&\\&&&1\esm}.\end{eqnarray*}
Then we have to compute approximations. We have a short exact sequence
$$\xymatrix{0\ar[r] & {\bsm 1\esm}\ar[r] & {\bsm 2\\1\esm}\oplus{\bsm
    &3&&\\1&&2&\\&&&1\esm}\ar[r] & {\bsm
2&&3&&\\&1&&2&\\&&&&1\esm} \ar[r] & 0},$$
where the map $\xymatrix{{\bsm 1\esm}\ar[r] & {\bsm 2\\1\esm}\oplus{\bsm
    &3&&\\1&&2&\\&&&1\esm}}$ is the minimal left $\add(T^2_\ww\oplus T^3_\ww)$-approximation of $T^1_\ww$.
Hence we have $T^4_\ww={\bsm
2&&3&&\\&1&&2&\\&&&&1\esm}$. We have an exact sequence
$$\xymatrix{0\ar[r] & {\bsm 2\\1\esm}\ar[r] & {\bsm
    &3&&\\1&&2&\\&&&1\esm}\oplus{\bsm
2&&3&&\\&1&&2&\\&&&&1\esm}\ar[r] & {\bsm
\\&3&&&&&\\1&&2&&3&&\\&&&1&&2&\\&&&&&&1\esm}\ar[r] & 0},$$
where $\xymatrix{{\bsm 2\\1\esm}\ar[r] & {\bsm
    &3&&\\1&&2&\\&&&1\esm}\oplus{\bsm
2&&3&&\\&1&&2&\\&&&&1\esm}}$ is the minimal left $\add(T^3_\ww\oplus T^4_\ww)$-approximation of $T^2_\ww$. Hence we have $T^5_\ww={\bsm
\\&3&&&&&\\1&&2&&3&&\\&&&1&&2&\\&&&&&&1\esm}$. There is an exact
sequence
$$\xymatrix{0\ar[r] & {\bsm
2&&3&&\\&1&&2&\\&&&&1\esm}\ar[r] & {\bsm
\\&3&&&&&\\1&&2&&3&&\\&&&1&&2&\\&&&&&&1\esm}\ar[r] & {\bsm
3\\1\esm}\ar[r] & 0},$$ hence $T^6_\ww={\bsm
3\\1\esm}$. So we have $T^j_\ww=L^j_\ww$ for any $j$.
\end{exa}

To define the $kQ$-modules $U^j_\ww$, the following notion is convenient.

\begin{dfa}
An \emph{admissible triple} is a triple $(Q,c,\ww)$ consisting of an
acyclic quiver $Q$, a Coxeter element $c$ admissible with respect to the
orientation of $Q$, and a $c$-sortable word $\ww=c^{(0)}c^{(1)}\ldots c^{(m)}$. 

We denote by $Q^{(j)}$ the quiver $Q$ restricted to the support of $c^{(j)}$.
\end{dfa}

\medskip 
Let $(Q,c,\ww)$ be an admissible triple with $\ww=s_{u_1}s_{u_2}\ldots
s_{u_l}$. For $j=1,\ldots, l$, we define $kQ$-modules $U_\ww^j$ by induction on $l$. 

If $l=1$ then we define $U^1_\ww=e_{u_1}kQ^{(0)}$, the simple projective $kQ^{(0)}$-module associated to the vertex $u_1$. 

Assume $l\geq 2$. Then we write $\ww=s_{u_1}\ww'$.
It is not hard to check that the triple
\[(Q'=\mu_{u_1}(Q^{(0)}), s_{u_1}c^{(0)}s_{u_1}, \ww')\]
is an admissible triple with $l(w')=l-1$. Therefore by the induction hypothesis we have $kQ'$-modules $U^1_{\ww'},\ldots U^{l-1}_{\ww'}$. For $j=2,\ldots, l$ we define 
$$U^j_\ww =R_{u_1}^{-}(U^{j-1}_{\ww'})$$
where $R_{u_1}^{-}$ is the reflection functor
$$ \xymatrix{\mod kQ'=\mod k(\mu_{u_1}Q^{(0)})\ar[rr]^(.6){R_{u_1}^{-}} && \mod kQ^{(0)}}$$ 
at the source $u_1$ of $Q^{(0)}$.

\begin{exa}\label{calculation of U}
Let $Q$ be the quiver $\xymatrix@R=.1cm@C=.5cm{&2\ar[dr] &
  \\1\ar[rr]\ar[ur] &&3}$ and $\ww:=s_1s_2s_3s_1s_2s_1$
in Examples \ref{calculation of L and M} and~\ref{calculation of T}.

Let us now compute the $U^j_\ww$'s. By definition $U^1_\ww={\bsm 1\esm}$. Then we have 
\begin{eqnarray*} U^2_\ww=R^-_1({\bsm 2\esm})= {\bsm 2\\1\esm},\quad U^3_\ww=R^-_1R^-_2({\bsm 3\esm})=R^-_1({\bsm 3\\2\esm})={\bsm &3&&\\1&&2&\\&&&1\esm}\\\textrm{and}\quad U^4_\ww=R^-_1R^-_2R^-_3({\bsm 1\esm})=R_1^-R_2^-({\bsm 1\\3\esm})=R^-_1({\bsm &1&&\\2&&3&\\&&&2\esm})={\bsm 2&&3&&\\&1&&2&\\&&&&1\esm}.\end{eqnarray*}  
And finally we have $U^5_\ww=R^-_1R^-_2R^-_3\tilde{R}^-_1({\bsm 2\esm})$ and
$U^6_\ww=R^-_1R^-_2R^-_3\tilde{R}^-_1\tilde{R}^-_2({\bsm 1\esm})$ where $\tilde{R}_i^-$ is the reflection functor associated to the quiver $\xymatrix{1\ar[r]& 2}$. Therefore we have
\begin{eqnarray*}
U^5_\ww &=& R_1^-R_2^-R_3^-({\bsm 2\\1\esm})= R^-_1R^-_2({\bsm
 &2&&\\3&&1&\\&&&3\esm})=R^-_1({\bsm
3&&1&&\\&2&&3&\\&&&&2\esm})={\bsm
\\&3&&&&&\\1&&2&&3&&\\&&&1&&2&\\&&&&&&1\esm}\\
U^6_\ww &=& R_1^-R_2^-R_3^-\tilde{R}^-_1({\bsm 1\\2\esm})= R^-_1R^-_2R^-_3({\bsm 2\esm})= R^-_1R^-_2({\bsm 2\\3\esm})=R^-_1({\bsm 3\esm})={\bsm 3\\1\esm}.\end{eqnarray*}
So we have $U^j_\ww=L^j_\ww=T^j_\ww$ for any $j$.
\end{exa}

\begin{thma}\label{LequalT}
Let $\ww=s_{u_1}\ldots s_{u_l}$ be a $c$-sortable word where $c$ is admissible for the orientation of $Q$.
\begin{enumerate}
\item We have $ L^j_\ww\simeq U^j_\ww$ for $j=1,\ldots,l$.
\item We have $ U^j_\ww\simeq T^j_\ww$ for $j=1,\ldots,l$ and $f^j_\ww$ is a monomorphism for $j=l(c^{(0)})+1,\ldots,l$.
\end{enumerate}
\end{thma}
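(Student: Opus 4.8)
The plan is to prove part (1) by induction on $l$, then derive part (2) from part (1) together with the mutation description of tilting modules from Proposition~\ref{tilting mutation}. For part (1), the base case $l=1$ is immediate since both $L^1_\ww$ and $U^1_\ww$ are the simple projective $e_{u_1}kQ^{(0)}$-module. For the inductive step, write $\ww=s_{u_1}\ww'$ with $\ww'=s_{u_2}\ldots s_{u_l}$. Since $\ww$ is $c$-sortable with $u_1$ the first letter of $c^{(0)}$, the vertex $u_1$ is a source of $Q^{(0)}$, and $(Q'=\mu_{u_1}(Q^{(0)}),\ s_{u_1}c^{(0)}s_{u_1},\ \ww')$ is again an admissible triple with $l(w')=l-1$. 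By Proposition~\ref{layerimagesimple}(1) we have $L^j_\ww\simeq L^{j-1}_{\ww'}\ten_\Lambda I_{u_1}$ for $j=2,\ldots,l$, where the $L^{j-1}_{\ww'}$ are layers for the quiver $Q'$. By the induction hypothesis $L^{j-1}_{\ww'}\simeq U^{j-1}_{\ww'}\in\mod kQ'$, and by Theorem~\ref{realroot}(1)--(2), $[L^j_\ww]=R_{u_1}R_{u_2}\cdots R_{u_{j-1}}([S_{u_j}])$ is distinct from $[S_{u_1}]$ for $j\geq 2$, so $L^j_\ww$ is not the simple projective $e_{u_1}kQ^{(0)}$. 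Hence $L^{j-1}_{\ww'}$ lies in the subcategory $\mod kQ'/[e_{u_1}DkQ']$, and Corollary~\ref{commutativediagram}(1) gives
$$L^j_\ww\simeq L^{j-1}_{\ww'}\ten_\Lambda I_{u_1}\simeq R^-_{u_1}(L^{j-1}_{\ww'})\simeq R^-_{u_1}(U^{j-1}_{\ww'})=U^j_\ww,$$
which closes the induction.

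For part (2), I first establish $U^j_\ww\simeq T^j_\ww$. Both agree for $j\leq l(c^{(0)})$, where they are the indecomposable projective $kQ^{(0)}$-modules $e_{u_j}kQ$. For $j>l(c^{(0)})$, the strategy is to show that the $U^j_\ww$ satisfy the same recursive mutation recipe that defines the $T^j_\ww$. This requires identifying, for the maximal $k<j$ with $u_k=u_j$, that $U^k_\ww\to E$ — where $E$ is built from $U^{k+1}_\ww,\ldots,U^{j-1}_\ww$ — is a minimal left $\add\{U^{k+1}_\ww,\ldots,U^{j-1}_\ww\}$-approximation whose cokernel is $U^j_\ww$. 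The key device is that the $U$'s are obtained from simple projectives by iterated reflection functors $R^-_{u_1}\cdots R^-_{u_t}$ applied to layers over successively mutated quivers. Since reflection functors at sources/sinks are exact and send approximations to approximations (they are equivalences on the relevant stable categories), one reduces, by peeling off the reflection functors one at a time down to an admissible triple whose first letter equals $u_j$, to the case where the relevant exchange is between a simple projective $e_{u_j}kQ^{(t)}$ and the next module — exactly the elementary mutation exchanging the two complements of an almost complete tilting module, compatible with Proposition~\ref{tilting mutation}(a).

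Once $U^j_\ww\simeq T^j_\ww$ is known, the monomorphism claim for $f^j_\ww$ follows: by part (1) and the above, $T^j_\ww\simeq L^j_\ww$ is a genuine module (nonzero, finite-dimensional), so in the defining exact sequence $T^k_\ww\xrightarrow{f^j_\ww} E\to T^j_\ww\to 0$ the map $f^j_\ww$ has cokernel $T^j_\ww\ne 0$; using that $\Ext^1_{kQ}$-rigidity of the partial tilting module $\{T^{k+1}_\ww,\ldots,T^{j-1}_\ww\}$ forces the minimal left approximation to be injective whenever the sincerity hypothesis of Proposition~\ref{tilting mutation}(a) holds, which is guaranteed here because the $c$-sortable structure ensures that at each exchange step the relevant almost complete tilting module is sincere in the support quiver. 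Alternatively, one can observe directly from the ideal filtration that $L^j_\ww$ together with $\{L^{k+1}_\ww,\ldots,L^{j-1}_\ww\}$ fits into the short exact sequence coming from $I_{u_{j-1}}\cdots I_{u_1}\subset I_{u_k}\cdots I_{u_1}$, forcing $f^j_\ww$ to be a monomorphism.

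The main obstacle I anticipate is the bookkeeping in part (2): carefully tracking how the minimal left approximation $f^j_\ww$ interacts with the tower of reflection functors, and verifying that the sets $\add\{T^{k+1}_\ww,\ldots,T^{j-1}_\ww\}$ are exactly the images under those functors of the corresponding sets of simple/projective modules over the mutated quivers, so that approximations really do correspond. In particular one must check that no module among $U^{k+1}_\ww,\ldots,U^{j-1}_\ww$ becomes killed or identified under the reflection functors (which is where Theorem~\ref{realroot}(2), distinctness of the $[L^j_\ww]$, is used again), and that the sincerity/monomorphism condition is preserved at each step so that Proposition~\ref{tilting mutation}(a) applies throughout.
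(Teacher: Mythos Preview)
Your part~(1) is correct in spirit and close to the paper, though you route through Corollary~\ref{commutativediagram} and Theorem~\ref{realroot} where the paper does a direct ideal computation via Lemma~\ref{lemmelayer1}(3). One logical slip: to apply Corollary~\ref{commutativediagram} you need $L^{j-1}_{\ww'}\ncong S_{u_1}$ (the simple injective over $kQ'$), not $L^j_\ww\ncong S_{u_1}$; the implication you wrote goes the wrong way. The fix is one line: if $L^{j-1}_{\ww'}=S_{u_1}$ then $[L^j_\ww]=R_{u_1}[S_{u_1}]=-[S_{u_1}]$ by Theorem~\ref{realroot}(1), a negative root, contradicting that $L^j_\ww$ is a nonzero module.

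Part~(2) has a real gap. You correctly identify the reduction via reflection functors, but you do not supply the object that makes the base case work, and neither of your monomorphism arguments holds up. The paper's device is this: set $\ww''=s_{u_1}\ldots s_{u_{k-1}}$, $\ww'=s_{u_k}\ldots s_{u_j}$, and $c'=s_{u_k}\ldots s_{u_{j-1}}$; then $(Q',c',\ww')$ is admissible with $U^1_{\ww'}=S_{u_k}$ simple projective and $U^{j-k+1}_{\ww'}=R^-_{c'}(S_{u_k})=\tau^{-1}_{kQ'}(S_{u_k})$, so the sequence in question is the \emph{almost split sequence}
\[
0\longrightarrow S_{u_k}\longrightarrow E\longrightarrow \tau^{-1}_{kQ'}S_{u_k}\longrightarrow 0.
\]
Its left map is left almost split, hence automatically a minimal left $\add\{U^2_{\ww'},\ldots,U^{j-k}_{\ww'}\}$-approximation (these are precisely the remaining indecomposable projectives over $kQ'$, and $E$ lies in their additive closure), and it is a monomorphism for free. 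Applying the exact equivalence $R^-_{\ww''}=R^-_{u_{k-1}}\circ\cdots\circ R^-_{u_1}$ transports both the short exactness and the approximation property to $0\to U^k_\ww\to R^-_{\ww''}(E)\to U^j_\ww\to 0$, which gives $T^j_\ww\simeq U^j_\ww$ and the injectivity of $f^j_\ww$ simultaneously. Your sincerity appeal to Proposition~\ref{tilting mutation} is not available here: the fact that the $T^j_\ww$ assemble into tilting modules is Theorem~\ref{Twtilting}, which \emph{uses} the present theorem as input. And your ideal-filtration alternative does not produce a map into an object of $\add\{L^{k+1}_\ww,\ldots,L^{j-1}_\ww\}$, so it cannot be compared with $f^j_\ww$.
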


\begin{proof}
(1) By definition $L^1_\ww= e_{u_1}\Lambda/I_{u_1}=S_{u_1}$ and $U^1_\ww=e_{u_1}kQ^{(0)}=S_{u_1}$. Hence we get $U^1_\ww=L^1_\ww$.

Let $\ww'$ be the word $s_{u_2}\ldots s_{u_l}$. We will prove that 
 $L^j_\ww=R_{u_1}^{-}(L^{j-1}_{\ww'})$ for $j\geq 2$.

By Lemma \ref{lemmelayer1} (3) we have $R^{-}_{u_1}(-)=
 -\otimes_{kQ'}\frac{I_{u_1}}{I_{c'}I_{u_1}}$. We can write 
$$L^{j-1}_{\ww'}=\frac{e_{u_j}I_{u_k}\ldots I_{u_2}}{e_{u_j}I_{u_j}\ldots
  I_{u_2}}=: \frac{Y}{X}.$$

We have the following exact sequence:

$$\xymatrix{\frac{Y}{X}\ten_{\Lambda}I_{c'}I_{u_1}\ar^{a}[r]&
  \frac{Y}{X}\ten_{\Lambda}I_{u_1} \ar^{b}[r]&
  \frac{Y}{X}\ten_{\Lambda}\frac{I_{u_1}}{I_{c'}I_{u_1}} \ar[r]& 0}$$

By Proposition \ref{basicprop} (b) we have the inclusion $YI_{c'}\subset X$ since $u_2\cdots u_j$ is a
subword of $c'u_2\ldots u_k$. Thus we have $a(y\otimes pq)=yp\otimes q=0$
for any $y\in\frac{Y}{X}$, $p\in I_{c'}$ and $q\in I_{u_1}$.
Thus $a=0$ and $b$ is an isomorphism.
We have isomorphisms
$$R_{u_1}^{-}(L_{\ww'}^{j-1})=\frac{Y}{X}\ten_{\Lambda}\frac{I_{u_1}}{I_{c'}I_{u_1}}
\stackrel{b}{\simeq} \frac{Y}{X}\ten_{\Lambda}I_{u_1} \simeq
\frac{Y\ten_\Lambda I_{u_1}}{\Imm(X\ten_\Lambda I_{u_1}\to Y\ten_\Lambda I_{u_1})}\simeq
\frac{Y\ten_\Lambda I_{u_1}}{X\ten_\Lambda I_{u_1}}\simeq L^j_\ww.$$

\medskip
(2) We will now prove that $U^j_\ww\simeq T^j_\ww$. For $j\leq l(c^{(0)})$ this is
clear because of a basic property of reflection functors.

Assume $j> l(c^{(0)})$. Let $k$ be the maximal integer such that
$u_k=u_j$ and $k<j$. It exists because $j>l(c^{(0)})$ and $\ww$ is
$c$-sortable. We define the subwords 
$\ww''=s_{u_1}\ldots s_{u_{k-1}}$ and $\ww'=s_{u_{k}}\ldots s_{u_j}$ of $\ww$. Let
$c'$ be $s_{u_k}\ldots s_{u_{j-1}}$, and $Q'$ be the quiver
$\mu_{u_{k-1}}\circ \cdots \circ \mu_{u_{1}}(Q)$. Then $(Q',c',\ww')$ is an
admissible triple. We have $U_{\ww'}^1=S_{u_k}$ and
$U_{\ww'}^{j-k+1}=R_{c'}^{-}(S_{u_k})=\tau_{kQ'}^{-1}(S_{u_k})$, thus we have an
almost split sequence:
$$0\rightarrow U^1_{\ww'}\rightarrow E\rightarrow U_{\ww'}^{j-k+1}\rightarrow
0$$
Applying the reflection functor $R_{\ww''}^{-}:=R_{u_{k-1}}^{-}\circ\cdots\circ R_{u_1}^{-}
:\mod kQ'\to\mod kQ$ to this short exact sequence
we still get a short exact sequence:
$$0\rightarrow R_{\ww''}^{-}(U^1_{\ww'})\rightarrow R_{\ww''}^{-}(E)\rightarrow R_{\ww''}^{-}(U_{\ww'}^{j-k+1})\rightarrow
0$$  which is $$0\rightarrow U^k_\ww\rightarrow R_{\ww''}^{-}(E)\rightarrow U^j_\ww\rightarrow
0$$
and the left map is a left $\add\{R_{\ww''}^{-}(U^2_{\ww'}),\ldots,
R_{\ww''}^{-}(U^{j-k}_{\ww'})\}$-approximation, thus a left
$\add\{U^{k+1}_\ww,\ldots,U^{j-1}_\ww\}$-approximation.

 \end{proof}

\begin{rema}\label{remark to LequalT}
The statements $ L^j_\ww\simeq U^j_\ww\simeq T^j_\ww$ for $j=1,\ldots,l$ in
Theorem \ref{LequalT} is also true for non-reduced words $\ww=\ww's_{u_l}=c^{(0)}c^{(1)}\ldots c^{(m)}$ 
such that $\ww'$ is reduced and that all $c^{(t)}$ are subwords of $c$ whose supports satisfy 
$$supp(c^{(m)})\subseteq supp(c^{(m-1)})\subseteq \ldots \subseteq supp(c^{(1)})\subseteq supp(c^{(0)})\subseteq Q_0.$$
The proof above works without any change. Note that in this situation, the morphism $f^j_\ww$ is a monomorphism for $j=1,\ldots, l-1$, but the morphism $f^l_\ww$ may not be a monomorphism and $T^l_\ww$ and $U^l_\ww$ may be zero.
\end{rema}

\begin{cora}\label{propertiesofL}
Let $\ww$ be a $c$-sortable word, where $c$ is admissible with respect to the orientation of $Q$. Then the $kQ$-modules $L^j_\ww$ satisfy the following properties:
\begin{enumerate}
\item They are non-zero, indecomposable and pairwise non-isomorphic.
\item The space $\Hom_{kQ}(L^j_\ww,L^k_\ww)$ vanishes if $j>k$.
\end{enumerate}
\end{cora}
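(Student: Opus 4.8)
\textbf{Proof plan for Corollary \ref{propertiesofL}.}

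The plan is to deduce both statements from the identification $L^j_\ww\simeq U^j_\ww\simeq T^j_\ww$ established in Theorem \ref{LequalT}, together with the basic properties of reflection functors and tilting mutation recalled in Section 1. For part (1), the non-vanishing and indecomposability are most easily read off from the $U^j_\ww$ description: by induction on $l$ using the admissible triple $(Q'=\mu_{u_1}(Q^{(0)}),s_{u_1}c^{(0)}s_{u_1},\ww')$, each $U^j_\ww=R^-_{u_1}(U^{j-1}_{\ww'})$ with $U^1_\ww$ the simple projective $e_{u_1}kQ^{(0)}$, which is indecomposable and non-zero; the key point is that, by Theorem \ref{realroot}(2) (already used in the proof of the previous theorem), $[L^j_\ww]=R_{u_1}\cdots R_{u_{j-1}}([S_{u_j}])\ne[S_{u_1}]$ for $j\ge 2$, so $U^{j-1}_{\ww'}$ is never the projective $e_{u_1}kQ'$ killed by $R^-_{u_1}$, hence the reflection functor preserves indecomposability and non-vanishing. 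Pairwise non-isomorphism is immediate from Theorem \ref{realroot}(2), since $[L^1_\ww],\ldots,[L^l_\ww]$ are already pairwise distinct in $K_0(\Dd^b(\fl\Lambda))$.

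For part (2), I would argue by induction on $l$ as well, transporting the Hom-vanishing across the reflection functor $R^-_{u_1}:\mod kQ'\to\mod kQ^{(0)}$. Suppose $j>k\ge 1$. If both $j,k\ge 2$, then $L^j_\ww=R^-_{u_1}(L^{j-1}_{\ww'})$ and $L^k_\ww=R^-_{u_1}(L^{k-1}_{\ww'})$ with $j-1>k-1$, and the induction hypothesis for the admissible triple $(Q',s_{u_1}c^{(0)}s_{u_1},\ww')$ gives $\Hom_{kQ'}(L^{j-1}_{\ww'},L^{k-1}_{\ww'})=0$; since $R^-_{u_1}$ is fully faithful on the relevant quotient categories (it induces an equivalence $\mod kQ'/[e_{u_1}DkQ']\xrightarrow{\sim}\mod kQ^{(0)}/[e_{u_1}kQ^{(0)}]$) and $L^j_\ww$ is not the injective $e_{u_1}DkQ^{(0)}$ (its dimension vector is a positive real root that is not $[S_{u_1}]$'s image), any nonzero map $L^j_\ww\to L^k_\ww$ would descend to a nonzero map in the quotient, hence pull back to a nonzero map $L^{j-1}_{\ww'}\to L^{k-1}_{\ww'}$, a contradiction. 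The remaining case is $k=1$: here $L^k_\ww=S_{u_1}=e_{u_1}kQ^{(0)}$ is the simple projective at the source $u_1$, so $\Hom_{kQ}(L^j_\ww,S_{u_1})\ne 0$ would force $u_1$ to be in the top of $L^j_\ww$; but $L^j_\ww=R^-_{u_1}(L^{j-1}_{\ww'})$ lies in the image of $R^-_{u_1}$, which consists of modules with no $S_{u_1}$ in the socle when $u_1$ is viewed as a sink of $Q'$ — equivalently, $L^j_\ww$ has no composition factor arrangement putting $S_{u_1}$ on top, because after applying $R_{u_1}$ back we would recover the projective. More cleanly, since $e_{u_1}kQ^{(0)}$ is projective, $\Hom_{kQ}(L^j_\ww,S_{u_1})\ne 0$ combined with projectivity and $L^j_\ww$ indecomposable non-projective-at-$u_1$ is incompatible; I will phrase this via the $T^j_\ww$-description instead, where $T^1_\ww=e_{u_1}kQ$ is projective and $\Sub(T_\ww)$ being a torsionfree class (Proposition \ref{tilting mutation} and the $c$-sortable mutation procedure) forces $\Hom(T^j_\ww,T^1_\ww)=0$ for $j>1$ by the ordering of the mutation sequence.

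Actually the cleanest route for (2) is to invoke the $T^j_\ww$ picture directly: the modules $T^1_\ww,\ldots,T^l_\ww$ are produced by a sequence of tilting mutations starting from $T^{\le l(c^{(0)})}_\ww=kQ^{(0)}$, where at step $j$ one replaces the complement $T^k_\ww$ of an almost complete tilting module by $T^j_\ww=\Coker f^j_\ww$ with $f^j_\ww$ a minimal left $\add\{T^{k+1}_\ww,\ldots,T^{j-1}_\ww\}$-approximation (a monomorphism by Theorem \ref{LequalT}(2)); from the short exact sequence $0\to T^k_\ww\to E\to T^j_\ww\to 0$ with $E\in\add\{T^{k+1}_\ww,\ldots,T^{j-1}_\ww\}$ and Proposition \ref{tilting mutation}, one gets that $\Hom_{kQ}(T^j_\ww,T^i_\ww)=0$ for all $i<j$ by induction, using that each newly created module is "to the right" of all previous ones in the torsionfree-class ordering and that $\Ext^1$ and $\Hom$ between old summands vanish appropriately. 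The main obstacle I anticipate is precisely keeping the bookkeeping of this mutation order correct — making sure the approximation sequences interact so that no backward Hom is created — and handling the base case $j\le l(c^{(0)})$ where the $T^j_\ww$ are projective $kQ^{(0)}$-modules $e_{u_j}kQ$ with $c^{(0)}$ admissible, so $\Hom_{kQ}(e_{u_j}kQ,e_{u_k}kQ)=e_{u_k}(kQ)e_{u_j}=0$ for $j>k$ by admissibility of $c$ (no path from $u_j$ to $u_k$ when $u_j$ comes later in an admissible Coxeter element).
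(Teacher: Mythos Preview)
Your approach for both parts is essentially the paper's: use the identification $L^j_\ww\simeq U^j_\ww$ and reflection functors. For (1), the paper gets non-vanishing directly from Proposition~\ref{basicprop}(b) (the word is reduced) and then observes in one line that reflection functors preserve indecomposability and isoclasses; your invocation of Theorem~\ref{realroot}(2) for pairwise non-isomorphism is an equally valid shortcut.

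For (2), the paper's entire proof is: ``Using reflection functors, we can assume that $U^k_\ww$ is simple projective, and then this is clear.'' Your inductive version unwinds the same idea, but there is a small organizational gap. In the step $j,k\ge 2$ you assert that any nonzero map $L^j_\ww\to L^k_\ww$ descends to a nonzero map in $\mod kQ^{(0)}/[e_{u_1}kQ^{(0)}]$; this needs the fact that no such map factors through $S_{u_1}=e_{u_1}kQ^{(0)}$, i.e.\ that $\Hom_{kQ}(L^j_\ww,S_{u_1})=0$ for $j\ge 2$. That is exactly your $k=1$ case, so it must be established \emph{first}. The clean argument (which you circle around to) is immediate: $S_{u_1}$ is simple projective, so a nonzero map $L^j_\ww\to S_{u_1}$ would split off $S_{u_1}$ as a summand, contradicting part~(1). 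Once this is in place, the induction goes through, and your alternative route via the $T^j_\ww$'s and tilting mutation is unnecessary --- it would require considerably more bookkeeping than the one-line reflection-functor reduction the paper uses.
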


\begin{proof}
(1) Since $\ww$ is reduced, $L^j_\ww$ is non-zero by Proposition \ref{basicprop} (b).
Since reflection functors preserve isoclasses, the $U^j_\ww$ are indecomposable and pairwise non-isomorphic.

(2) Using reflection functors, we can assume that $U^k_\ww$ is simple
  projective, and then this is clear.

\end{proof}

\begin{thma}\label{Twtilting}
Let $(Q,c,\ww=s_{u_1}\ldots s_{u_l})$ be an admissible triple.
For $i\in Q_0^{(0)}$, denote by $t_\ww(i)$ the maximal integer such that $u_{t_\ww(i)}=i$. 
Let $$T_\ww:=\bigoplus_{i\in Q^{(0)}_0}L^{t_\ww(i)}_\ww\simeq \bigoplus_{i\in Q^{(0)}_0}T^{t_\ww(i)}_\ww.$$
\begin{enumerate}
\item $T_{\ww}$ is a tilting $kQ^{(0)}$-module.
\item We have $\Sub(T_\ww)=\add\{L^1_\ww,\ldots, L^{l}_\ww\}.$
\end{enumerate}
\end{thma}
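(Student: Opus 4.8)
\textbf{Proof strategy for Theorem \ref{Twtilting}.}

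The plan is to deduce both statements from the structural results already assembled, especially Theorem \ref{LequalT} (the identification $L^j_\ww\simeq T^j_\ww$ and the fact that each $f^j_\ww$ is a monomorphism for $j>l(c^{(0)})$), Corollary \ref{propertiesofL}, and Proposition \ref{tilting mutation} on mutation of tilting modules. For part (1), I would argue by induction on $l-l(c^{(0)})$, that is, on the number of mutation steps. The base case $l=l(c^{(0)})$ is clear: then $T_\ww=\bigoplus_{i\in Q^{(0)}_0}e_ikQ^{(0)}=kQ^{(0)}$, the canonical tilting module. For the inductive step, consider the last index $l$; let $k<l$ be maximal with $u_k=u_l$ (which exists since $j>l(c^{(0)})$ and $\ww$ is $c$-sortable). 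By Theorem \ref{LequalT}(2), $T^l_\ww=\Coker f^l_\ww$ where $f^l_\ww\colon T^k_\ww\to E$ is a monomorphism whose target $E$ lies in $\add\{T^{k+1}_\ww,\ldots,T^{l-1}_\ww\}$. The point is that $E$ is in fact the minimal left $\add(T_{\ww'}/T^k_{\ww'})$-approximation of $T^k_{\ww'}$, where $\ww'=s_{u_1}\cdots s_{u_{l-1}}$, because the summands $T^{k+1}_{\ww'},\ldots,T^{l-1}_{\ww'}$ that could appear are precisely the indecomposable summands of $T_{\ww'}$ other than $T^k_{\ww'}$ — here one uses that, by $c$-sortability and maximality of $k$, no $u_j$ with $k<j<l$ equals $u_l$, so each such index contributes the ``final'' occurrence of its vertex and hence a genuine summand of $T_{\ww'}$. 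Since by induction $T_{\ww'}$ is a tilting $kQ^{(0)}$-module, Proposition \ref{tilting mutation}(a) applies and $T^l_\ww=\Coker f^l_\ww$ is the mutation complement, so $T_\ww=(T_{\ww'}/T^k_{\ww'})\oplus T^l_\ww$ is again a tilting module.

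For part (2), the inclusion $\add\{L^1_\ww,\ldots,L^l_\ww\}\subseteq\Sub(T_\ww)$ follows from Corollary \ref{propertiesofL}: each $L^j_\ww$ is a $kQ^{(0)}$-module, and I would show it embeds in a sum of copies of $T_\ww$. Concretely, following the chain of approximations defining the $T^j_\ww$, one sees by downward induction that each $T^j_\ww$ is a submodule of $\bigoplus_{i}T^{t_\ww(i)}_\ww$: the exchange sequences $0\to T^k_\ww\to E\to T^j_\ww\to 0$ express $T^k_\ww$ inside $E\in\add\{T^{k+1}_\ww,\ldots,T^{j-1}_\ww\}$, and iterating pushes every layer into $\add T_\ww$ as a submodule — alternatively, use that $\Sub T_\ww$ is a torsionfree class (Proposition \ref{tilting mutation} last sentence) closed under submodules, together with the fact that each $L^j_\ww$ is a submodule of $L^{t_\ww(u_j)}_\ww$ via the natural surjections $M^j_\ww\twoheadrightarrow M^i_\ww$ of Section 1. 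For the reverse inclusion $\Sub(T_\ww)\subseteq\add\{L^1_\ww,\ldots,L^l_\ww\}$, I would use a counting/exhaustion argument: the torsionfree class $\Sub(T_\ww)$ has only finitely many indecomposables since $T_\ww$ is a tilting module with $\Sub T_\ww$ contravariantly finite, and the $l=l(w)$ modules $L^1_\ww,\ldots,L^l_\ww$ are pairwise non-isomorphic indecomposables (Corollary \ref{propertiesofL}(1)) all lying in it; one then checks that $l$ equals the number of indecomposables in $\Sub T_\ww$. This last equality can be obtained inductively: mutating $\ww'\to\ww$ corresponds to enlarging the torsionfree class by exactly one indecomposable (the new complement $T^l_\ww$, together with the fact that the new layer $L^l_\ww$ is exactly this module and is not among $L^1_{\ww'},\ldots,L^{l-1}_{\ww'}$ by Theorem \ref{realroot}(2)).

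\textbf{Main obstacle.} The delicate point is the bookkeeping in part (1) that identifies the approximation target $E$ appearing in the definition of $T^l_\ww$ with the full $\add(T_{\ww'}/T^k_{\ww'})$-approximation needed to invoke Proposition \ref{tilting mutation}(a) — i.e.\ verifying that the summands $T^j_\ww$ for $k<j<l$ are exactly the non-$T^k_\ww$ summands of the tilting module $T_{\ww'}$, with correct multiplicities, and that the approximation is minimal and left-exact. This requires carefully exploiting $c$-sortability (each vertex of $c^{(0)}$ appears, the later $c^{(t)}$ have shrinking supports) to control which indices $j$ give ``terminal'' occurrences. Granting that identification, everything else is a formal consequence of the mutation theory and the already-established properties of the layers; part (2) is then mostly an exhaustion argument once the count $\#\mathrm{ind}(\Sub T_\ww)=l$ is in hand, which itself follows from the one-step-at-a-time nature of tilting mutation.
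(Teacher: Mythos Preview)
Your approach to part (1) is the same induction as the paper's, but the key step contains an error. You assert that the modules $T^j_\ww$ for $k<j<l$ are \emph{exactly} the indecomposable summands of $T_{\ww'}$ other than $T^k_\ww$. This is false: take $Q=1\to2\to3$, $c=s_1s_2s_3$, $\ww=s_1s_2s_3s_1s_2s_1$; then $l=6$, $k=4$, and $T_{\ww'}/T^k_\ww=T^3_\ww\oplus T^5_\ww$, whereas $\{T^{k+1}_\ww,\ldots,T^{l-1}_\ww\}=\{T^5_\ww\}$. What $c$-sortability gives you is only the inclusion $\{T^{k+1}_\ww,\ldots,T^{l-1}_\ww\}\subseteq\add(T_{\ww'}/T^k_\ww)$. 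The missing summands are those $T^{t_{\ww'}(i)}_\ww$ with $t_{\ww'}(i)<k$, and the paper handles them via Corollary \ref{propertiesofL}(2): since $\Hom_{kQ}(T^k_\ww,T^j_\ww)=0$ for $j<k$, these summands contribute nothing to a left approximation of $T^k_\ww$, so the $\add\{T^{k+1}_\ww,\ldots,T^{l-1}_\ww\}$-approximation is automatically an $\add(T_{\ww'}/T^k_\ww)$-approximation. Once you use this Hom-vanishing, the rest of your argument for (1) is correct and matches the paper.

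For part (2) your route diverges from the paper's, and it has a genuine gap. The paper peels off the \emph{first} letter, writing $\ww=s_{u_1}\ww'$, and uses the reflection functor $R^-_{u_1}$ to transport the inductive statement $\Sub T_{\ww'}=\add\{U^1_{\ww'},\ldots,U^{l-1}_{\ww'}\}$ from $\mod kQ'$ to $\mod kQ^{(0)}$; this gives both inclusions simultaneously, with a short extra check that $U^1_\ww\in\Sub T_\ww$. Your alternative is to peel off the \emph{last} letter and run a counting argument, asserting that a single left tilting mutation enlarges the torsionfree class by exactly one indecomposable. But you neither prove this nor cite a result for it, and it is not a formal consequence of Proposition \ref{tilting mutation}: knowing $\Sub T_{\ww'}\subseteq\Sub T_\ww$ and $T^l_\ww\in\Sub T_\ww\setminus\Sub T_{\ww'}$ does not by itself bound $\#\operatorname{ind}(\Sub T_\ww)$ from above (indeed, you have not yet shown $\Sub T_\ww$ is finite). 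Your forward inclusion via the exchange sequences is fine, but for the reverse you should either supply an independent argument that left mutation adds exactly one indecomposable to $\Sub T$, or switch to the paper's reflection-functor induction, which avoids the issue entirely.
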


\begin{proof}

(1) We prove that $T_\ww$ is a tilting $kQ^{(0)}$-module by induction on $l=l(\ww)$.

If $\ww=c^{(0)}$, then the assertion is clear since $T_\ww=kQ^{(0)}$ by definition.

We consider the case $\ww\neq c^{(0)}$.
Let $k$ be the maximal integer such that $k<l$ and $u_k=u_l$.
By the induction hypothesis we know that $T_{\ww'}=(T_\ww/T^l_\ww)\oplus T^k_\ww$ is a tilting $kQ^{(0)}$-module
where $\ww'$ is the word defined by $\ww=\ww's_{u_l}$.
By definition we have an exact sequence
$$\xymatrix{T^k_\ww\ar^f[r] & E\ar[r] & T^l_\ww\ar[r] & 0}$$
with a minimal left $\add\{T^{k+1}_\ww,\ldots,T^{l-1}_\ww\}$-approximation $f$.
Then $f$ is a minimal left $\add(T_\ww/T^l_\ww)$-approximation,
using the fact that $\ww$ is $c$-sortable and Corollary \ref{propertiesofL} (2).
Moreover $f$ is a monomorphism by Theorem \ref{LequalT}.
By Proposition \ref{tilting mutation}, we have that $T_\ww$ is a tilting $kQ^{(0)}$-module.

(2) We prove that $\Sub(T_\ww)=\add\{L^1_\ww,\ldots, L^{l}_\ww\}$ by induction on $l=l(w)$.

If $l(w)=1$, then the assertion is clear.

Assume that $l\geq 2$ and write $\ww=s_{u_1}\ww'$. 

\textit{Case 1: $u_1$ is in the support of $\ww'$}: this means that
$t_\ww(u_1)\geq 2$. Thus we have $$\begin{array}{rcl}T_\ww &=
  &\bigoplus_{i\in Q^{(0)}_0}U^{t_\ww(i)}_\ww\\
& = &\bigoplus_{i\in Q^{(0)}_0}R^{-}_{u_1}(U^{t_\ww(i)-1}_{\ww'})\\
& = &\bigoplus_{i\in Q^{(0)}_0}R^{-}_{u_1}(U^{t_{\ww'}(i)}_{\ww'})\\
& = & R^{-}_{u_1}(T_{\ww'})
\end{array}$$  
Then using the induction hypothesis we get 
$$\Sub T_{\ww'}=\add\{U^1_{\ww'},\ldots,U_{\ww'}^{l(\ww')}\}.$$
Moreover we have
$$\add\{U^2_{\ww},\ldots,U_{\ww}^{l(\ww)}\}=
R^{-}_{u_1}(\Sub T_{\ww'})\subset \Sub T_\ww\subset\add\{U^1_\ww,R^{-}_{u_1}(\Sub T_{\ww'})\}
=\add\{U^1_{\ww},\ldots,U_{\ww}^{l(\ww)}\}.$$

By definition of the $T^j_\ww$ there exists a short exact sequence:
$$\xymatrix{U^1_\ww=T^1_\ww\ar[r] & E\ar[r] & T^j_\ww\ar[r] & 0}$$ where $E$
is in $\add\{T^2_\ww,\ldots,T^{j-1}_\ww\}$ and where $j$ is the
minimal integer such that $u_j=u_1$ and $j>1$. It exists since $u_1$ is in
the support of $\ww'$.

The approximation map is a monomorphism by Theorem \ref{LequalT} (2), thus $U^1_\ww$ is in
$\Sub(E)\subset \Sub(T^2_\ww\oplus\ldots \oplus T^{j-1}_\ww)\subset \Sub T_\ww$.
Thus we have $\Sub T_\ww=\add\{U^1_{\ww},\ldots,U_{\ww}^{l(\ww)}\}$.

\textit{Case 2: $u_1$ is not in the support of $\ww'$.}

Then it is easy to see that 
$$T_\ww=U^1_\ww\oplus R^{-}_{u_1}(T_{\ww'}).$$
And we get  $$ \Sub T_\ww=\add\{U^1_\ww,R^{-}_{u_1}(U^1_{\ww'}),\ldots,R^{-}_{u_1}(U_{\ww'}^{l(\ww')})\}=\add\{U^1_\ww,U^2_\ww,\ldots ,U^{l(\ww)}_\ww\}.$$

\end{proof}

\begin{rema}
\begin{itemize}
\item[(a)]
The short exact sequence $\xymatrix{ L^k_\ww\ar@{>->}[r]^f & E\ar@{->>}[r] & L^j_\ww}$ in $\mod kQ$ is an almost split sequence in the category $\Sub(T_\ww)$.
\item[(b)] This almost split sequence is an element of $\Ext^1_\Lambda(L^j_\ww,L^k_\ww)$, which is the `2-Calabi-Yau complement' of the short exact sequence $\xymatrix{ L^j_\ww\ar@{>->}[r] & K\ar@{->>}[r] & L^k_\ww}$ of Proposition \ref{dimext1}.
\end{itemize}
\end{rema}

\begin{exa}\label{calculation of Sub T}
Let $Q$ be the quiver $\xymatrix@R=.1cm@C=.5cm{&2\ar[dr] &
  \\1\ar[rr]\ar[ur] &&3}$ and $\ww:=s_1s_2s_3s_1s_2s_1$
in Example \ref{calculation of T}.

The module $T_\ww$ is by definition $T^3_\ww\oplus T^5_\ww\oplus T^6_\ww$. It
is easy to check Theorem \ref{Twtilting}. The module $T_\ww$  is a tilting module over
$kQ$, and we have $$\Sub T_\ww=\{{\bsm 1\esm},\quad {\bsm 2\\1\esm},
\quad {\bsm
 &3&&\\1&&2&\\&&&1\esm},\quad  {\bsm
2&&3&&\\&1&&2&\\&&&&1\esm},\quad  {\bsm
\\&3&&&&&\\1&&2&&3&&\\&&&1&&2&\\&&&&&&1\esm},
\quad {\bsm
 3\\1\esm}\}.$$
\end{exa}

\subsection{Tilting modules with finite torsionfree class}

In this section we establish the converse of Theorem \ref{Twtilting}. Hence we get a natural bijection between tilting $kQ$-modules with finite torsionfree class and $c$-sortable elements in $W_Q$.

Let us start with some preparation.
To any (not necessarily reduced) word $\ww=s_{u_1}\ldots s_{u_l}=c^{(0)}c^{(1)}\ldots c^{(m)}$ such that all $c^{(t)}$ are subwords of $c$ whose supports satisfy 
$$supp(c^{(m)})\subseteq supp(c^{(m-1)})\subseteq \ldots \subseteq supp(c^{(1)})\subseteq supp(c^{(0)})\subseteq Q_0,$$
we can associate $kQ^{(0)}$-modules $T^j_\ww$ for $j=1,\ldots,l$ and $T_\ww$ in the same way as in the $c$-sortable case.

\begin{lema}\label{csortableifftilting}
Let $\ww=s_{u_1}\ldots s_{u_l}=\ww's_{u_l}$ be as above. Assume that $\ww$ is non-reduced and that $\ww'$ is reduced.
Then the number of indecomposable summands of $T_\ww$ is strictly less than $l(c^{(0)})$. 
\end{lema}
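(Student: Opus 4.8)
The idea is to reduce to the reduced word $\ww'=s_{u_1}\ldots s_{u_{l-1}}$, for which Theorem \ref{Twtilting} (applied via Remark \ref{remark to LequalT} to the possibly non-reduced but "$c$-sorted" shape) tells us that $T_{\ww'}$ is a tilting $kQ^{(0)}$-module, hence has exactly $l(c^{(0)})$ nonisomorphic indecomposable summands. The passage from $\ww'$ to $\ww=\ww's_{u_l}$ replaces one complement: if $k<l$ is the maximal index with $u_k=u_l$ (which exists, since $\ww$ is non-reduced so $u_l$ already occurs, in fact occurs in $c^{(0)}$ and the sorting shape forces it to recur), then $T^l_\ww=\Coker f^l_\ww$ where $f^l_\ww\colon T^k_\ww\to E$ is the minimal left $\add\{T^{k+1}_\ww,\ldots,T^{l-1}_\ww\}$-approximation, and by Corollary \ref{propertiesofL}(2) together with the sorting shape this is in fact a minimal left $\add(T_{\ww'}/T^k_\ww)$-approximation. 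So $T_\ww=(T_{\ww'}/T^k_\ww)\oplus T^l_\ww$ is obtained from the tilting module $T_{\ww'}$ by the mutation of Proposition \ref{tilting mutation}(a).

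The key point that distinguishes the non-reduced case from Theorem \ref{LequalT}(2) is that now $f^l_\ww$ need \emph{not} be a monomorphism; equivalently $T_{\ww'}/T^k_\ww$ is an almost complete tilting module which does \emph{not} admit a second complement, i.e. $T_{\ww'}/T^k_\ww$ is \emph{not sincere} as a $kQ^{(0)}$-module (by the result of Happel--Unger \cite{HU} cited before Proposition \ref{tilting mutation}). The reason is exactly that $\ww$ fails to be reduced: by Proposition \ref{basicprop}(b), $I_\ww=I_{\ww'}$, so the layer $L^l_\ww=I_{\ww'}/I_\ww$ would be zero, and correspondingly $T^l_\ww=\Coker f^l_\ww=0$. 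Thus $f^l_\ww$ is an epimorphism (in fact an isomorphism $T^k_\ww\xrightarrow{\sim}E$), which forces $\Coker f^l_\ww=0$ and hence $T_\ww=T_{\ww'}/T^k_\ww$, a module with $l(c^{(0)})-1<l(c^{(0)})$ indecomposable summands.

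First I would record that $u_l$ occurs among $u_1,\ldots,u_{l-1}$ (non-reducedness of $\ww$, using Proposition \ref{basicprop}(b): $\ww$ reduced iff $I_\ww\subsetneq I_{\ww'}$, so here $I_\ww=I_{\ww'}$) and that therefore $k:=\max\{j<l: u_j=u_l\}$ is well-defined. Next I would invoke Remark \ref{remark to LequalT} to get $L^j_\ww\simeq T^j_\ww$ for $j=1,\ldots,l-1$ and, crucially, $L^l_\ww\simeq T^l_\ww$, while $L^l_\ww=I_{u_{l-1}}\ldots I_{u_1}/I_{u_l}\ldots I_{u_1}=I_{\ww'}/I_\ww=0$ by the ideal equality above; hence $T^l_\ww=\Coker f^l_\ww=0$. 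Then I would use the defining exact sequence $T^k_\ww\xrightarrow{f^l_\ww}E\to T^l_\ww\to 0$ with $T^l_\ww=0$ to conclude $f^l_\ww$ is onto, and finally identify $T_\ww$ with $T_{\ww'}$ minus the summand $T^k_\ww$: since by Theorem \ref{Twtilting}(1)/Remark \ref{remark to LequalT} $T_{\ww'}$ is a tilting $kQ^{(0)}$-module with exactly $l(c^{(0)})$ indecomposable summands, $T_\ww$ has at most $l(c^{(0)})-1$, and the summands stay pairwise nonisomorphic, giving the strict inequality. The one technical step requiring care is checking that $f^l_\ww$, defined as a minimal left $\add\{T^{k+1}_\ww,\ldots,T^{l-1}_\ww\}$-approximation, really is a left $\add(T_{\ww'}/T^k_\ww)$-approximation — this is the same Hom-vanishing argument ($\Hom_{kQ}(L^j_\ww,L^t_\ww)=0$ for $j>t$ within a common $c^{(s)}$-block, Corollary \ref{propertiesofL}(2)) that already appears in the proof of Theorem \ref{Twtilting}(1), so I expect no new obstacle there; the main conceptual content is simply the observation $L^l_\ww=0\Rightarrow T^l_\ww=0$.
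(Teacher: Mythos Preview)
Your argument is correct and follows the same route as the paper: Remark \ref{remark to LequalT} gives $T^l_\ww\simeq L^l_\ww$, non-reducedness (Proposition \ref{basicprop}(b)) gives $L^l_\ww=0$, and then one counts the summands of $T_\ww$ (the paper does this directly by noting the remaining $T^{t_\ww(i)}_\ww$ are indecomposable via Theorem \ref{2spherical}, whereas you pass through the tilting module $T_{\ww'}$, but this is cosmetic). One minor slip: your parenthetical claim that $f^l_\ww$ is an \emph{isomorphism} is unjustified and in general false --- Remark \ref{remark to LequalT} explicitly warns that $f^l_\ww$ may fail to be a monomorphism --- but you only need, and only use, that it is an epimorphism, so the proof stands.
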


\begin{proof}
By Remark \ref{remark to LequalT}, we have $T^l_{\ww}\simeq L^l_{\ww}$.
Since $\ww$ is not reduced, this is zero by Proposition \ref{basicprop} (b). Since $\ww'$ is reduced, all $T^j_\ww\simeq L^j_\ww$ ($j\neq l$) are indecomposable by Theorem \ref{2spherical}. Therefore we have the assertion.
\end{proof}

\begin{lema}\label{AR things}
Let $Q$ be an acyclic quiver and $T$ be a tilting $kQ$-module. 
\begin{enumerate}
\item The category $\Sub{T}$ has almost split sequences.
\item If $\Sub{T}$ has finitely many indecomposable modules, then the AR-quiver of $\Sub{T}$ is
a full subquiver of the translation quiver $\mathbb{Z} Q$.
\end{enumerate}
\end{lema}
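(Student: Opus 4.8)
The plan is to realize $\Sub T$ as a functorially finite subcategory of $\mod kQ$ to which the standard Auslander--Reiten machinery applies, and then to exploit the known global structure of the AR-quiver of $\mod kQ$. For part (1), recall from the final remark of Section 1.2 that a tilting $kQ$-module $T$ with $\Ff=\Sub T$ gives a contravariantly finite torsionfree class containing the projectives. A torsionfree class is always closed under submodules and extensions; being contravariantly finite in $\mod kQ$, it is a functorially finite resolving-type subcategory, and by the general theory of almost split sequences in subcategories (Auslander--Smal\o) such a subcategory has almost split sequences: for every non-projective-in-$\Ff$ indecomposable $X\in\Ff$ there is an almost split sequence ending at $X$ with middle and left-hand terms in $\Ff$, and dually for objects that are not Ext-injective in $\Ff$. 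Since $\Sub T$ also has enough projectives and injectives (indeed $T$ itself is the additive generator of the $\Ext$-injectives, and the projectives of $\mod kQ$ lie in $\Sub T$), the hypotheses for the existence of almost split sequences are met. So part (1) is essentially a citation of the Auslander--Smal\o\ criterion applied to the contravariantly finite torsionfree class $\Sub T$.

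For part (2), assume $\Sub T$ has only finitely many indecomposables. I would first observe that $\Sub T$, being an extension-closed exact subcategory of $\mod kQ$, inherits the property that its AR-quiver has no loops and no multiple arrows: the Ext-bilinear form / dimension-vector argument that forbids loops in $\mod kQ$ (using that $kQ$ is hereditary, or that the layers / indecomposables here are Schurian enough) restricts to $\Sub T$, because irreducible maps in $\Sub T$ are in particular maps between indecomposable $kQ$-modules, and the mesh relations computed from almost split sequences of $\Sub T$ are compatible with those in $\mod kQ$. Next, since $\Sub T$ is a finite subcategory with almost split sequences and with the translation $\tau_{\Sub T}$ defined (being the restriction/approximation of $\tau_{kQ}$), its AR-quiver is a finite stable-like translation quiver, possibly with projective and injective vertices on the boundary. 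A connected translation quiver with no loops and no multiple arrows whose stable part embeds via the dimension-vector map into the root lattice of $Q$ is, by the Riedtmann-type structure theory, of the form $\mathbb{Z}\Delta/G$; finiteness and the hereditary ambient category force the underlying valued graph $\Delta$ to be $Q$ (up to the standard identification) and the group $G$ to be trivial, so the AR-quiver of $\Sub T$ is a full subquiver of $\mathbb{Z} Q$. Concretely I would argue that the inclusion $\Sub T\hookrightarrow\mod kQ$ sends almost split sequences of $\Sub T$ to exact sequences whose dimension vectors obey the mesh relations of $\mathbb{Z} Q$ (these are exactly the relations $[\tau X]+[X]=\sum[\text{middle}]$, which hold because $\tau_{\Sub T}$ agrees with $\tau_{kQ}$ on modules that are not $\Ext$-injective in $\Sub T$, and on the boundary the approximation correction is accounted for by the projective vertices), and then the combinatorial uniqueness of the mesh category on $\mathbb{Z} Q$ identifies the AR-quiver of $\Sub T$ with a full subquiver.

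The main obstacle I anticipate is the boundary behaviour in part (2): the almost split sequences of $\Sub T$ do not coincide with those of $\mod kQ$ at the modules that are projective-in-$\Sub T$ but not projective in $\mod kQ$ (equivalently, where the left $\add(T/T_i)$-approximation in Proposition \ref{tilting mutation} is not the $kQ$-source map), and one must check that replacing the genuine $\tau_{kQ}$-sequence by its $\Sub T$-version still yields a translation quiver embedding into $\mathbb{Z} Q$ rather than into some quotient or some quiver with extra arrows. I would handle this by using the reflection-functor description available in the $c$-sortable setting (Corollary \ref{commutativediagram} and Theorem \ref{LequalT}): reflection functors send almost split sequences to almost split sequences and preserve the shape of the AR-quiver, so one can reduce inductively to the case where the relevant module is simple projective, where the claim is transparent; alternatively, one invokes directly the known description of AR-quivers of $\Sub T$ for tilting modules over hereditary algebras. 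The remaining steps — no loops, no multiple arrows, connectedness of each component embedding into $\mathbb{Z} Q$ — are routine given the hereditary ambient category and the finiteness hypothesis.
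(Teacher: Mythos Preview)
Your treatment of part (1) is fine and matches the paper: both simply cite Auslander--Smal\o\ \cite{AS81} for almost split sequences in the contravariantly finite torsionfree class $\Sub T$.

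For part (2), however, your route diverges sharply from the paper's and carries real problems. The paper's argument is direct and elementary: since $T$ is tilting, all indecomposable projectives lie in $\Sub T$, and irreducible maps between them in $\Sub T$ coincide with those in $\mod kQ$, so $Q$ sits as a full subquiver of the AR-quiver of $\Sub T$. Connectedness follows because every indecomposable in $\Sub T$ receives a nonzero map from a projective and $\Sub T$ is of finite type. The key step is then to show there are \emph{no $\tau$-periodic indecomposables} in $\Sub T$: if $X$ were periodic, connectedness would force an irreducible map between some periodic $X$ and some $\tau^{-t}P$; translating back to $P$ yields an irreducible map to or from a projective, forcing $X$ or $\tau X$ to be projective, a contradiction. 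Hence every indecomposable is $\tau^{-t}P$ for the relative $\tau$, and the AR-quiver embeds as a full subquiver of $\mathbb{Z}Q$.

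Your proposal instead invokes Riedtmann-type structure theory, dimension-vector arguments for absence of loops, and compatibility of $\tau_{\Sub T}$ with $\tau_{kQ}$. Two concrete gaps: first, your assertion that ``$\tau_{\Sub T}$ agrees with $\tau_{kQ}$ on modules that are not $\Ext$-injective in $\Sub T$'' is not true in general---almost split sequences in $\Sub T$ are obtained by approximation and their left-hand terms need not coincide with the ambient $\tau$. The paper never needs this; it works entirely with the relative $\tau$. Second, and more seriously, your fallback for the boundary behaviour is to ``use the reflection-functor description available in the $c$-sortable setting (Corollary \ref{commutativediagram} and Theorem \ref{LequalT})''. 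But this lemma is stated for an \emph{arbitrary} tilting module $T$ with $\Sub T$ finite, with no $c$-sortable word in sight; indeed the lemma is precisely what is used in Theorem \ref{SubTfinite} to \emph{produce} the $c$-sortable word $\ww$ with $T_\ww\simeq T$. Invoking the $c$-sortable machinery here is circular. The paper's no-periodic-object argument avoids all of this and is both simpler and logically prior.
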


\begin{proof}
(1) This is well-known \cite{AS81}.

(2) We can clearly assume that $Q$ is connected. Since $T$ is a tilting
module, then all indecomposable projectives are in $\Sub T$. The
irreducible maps between projectives in $\Sub T$ coincide with the
irreducible maps between projectives in $\mod kQ$, so that $Q$ is a full
subquiver of the AR-quiver of $\Sub T$.  Moreover, for any indecomposable module in $\Sub T$, there is a nonzero map from an indecomposable projective module. Since $\Sub T$ is of finite type and $Q$ is connected, it follows that the AR-quiver of $\Sub T$ is connected.

We now claim that each indecomposable module in $\Sub T$ is of the form
$\tau^{-t}P$, where $\tau$ is the AR-translate in $\Sub T$ and $P$ is
indecomposable projective. If not, then since $\Sub T$ is of finite type,
there is some $\tau$-periodic indecomposable $X$.
Then, since the quiver of $\Sub T$ is connected, there must be an irreducible map between some periodic
indecomposable $X$ and some $\tau^{-t}P$ with $P$ indecomposable
projective. Applying $\tau^t$ we can assume that the second
module is $P$. If $f:X\to P$ is irreducible, then $X$ is projective, a
contradiction. If $g:P\to X$ is irreducible, then $h:\tau X\to P$ is
irreducible, so $\tau X$ is projective, a contradiction. Thus each indecomposable of $\Sub T$ is of the form $\tau^{-t}P$, where $P$ is indecomposable projective. 

Then using the fact that $Q$ is a full subquiver of the AR-quiver of $\Sub T$, we deduce that the AR-quiver of $\Sub T$ is a full subquiver of $\mathbb{Z}Q$.  

\end{proof}

From Lemmas \ref{csortableifftilting} and \ref{AR things} we deduce a nice consequence.

\begin{thma}\label{SubTfinite}
Let $Q$ be an acyclic quiver. Let $c$ be a Coxeter element admissible with respect to the orientation of $Q$. Let
$T$ be a tilting $kQ$-module. Assume that $\Sub{T}$ has finitely
many indecomposable modules. Then there exists a unique $c$-sortable
word $\ww$ such that $T_\ww\simeq T$.
\end{thma}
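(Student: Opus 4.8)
The plan is to construct the $c$-sortable word $\ww$ directly, by reading off the AR-quiver of $\Sub T$, and then to verify that the associated tilting module $T_\ww$ equals $T$ via the characterization of $c$-sortable words proved earlier and a length/induction argument using Lemma \ref{csortableifftilting}.

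First I would set up the combinatorial data. By Lemma \ref{AR things}(2), since $\Sub T$ is of finite representation type, its AR-quiver is a finite full subquiver of $\mathbb{Z}Q$, and every indecomposable of $\Sub T$ has the form $\tau^{-t}P_i$ for a unique indecomposable projective $P_i=e_ikQ$ and $t\geq 0$. Choose a linear ordering of the indecomposables of $\Sub T$, say $X_1,\dots,X_l$, that refines the partial order given by "$X$ appears before $Y$ if $\Hom_{kQ}(Y,X)\neq 0$ for $X\ncong Y$" (this is acyclic because $kQ$ is hereditary of finite type on $\Sub T$, or more simply because $\mathbb{Z}Q$ carries such an order). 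Concretely one can order by the $\tau^{-t}$-level first and within a level compatibly with the arrows of $Q$; the first $n$ modules are then the projectives $P_{u_1},\dots,P_{u_n}$ in an order admissible for $c$. Let $u_j\in Q_0$ be the vertex with $X_j\in\{\tau^{-t}P_{u_j}\}$, and set $\ww:=s_{u_1}\cdots s_{u_l}$.

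Next I would show $\ww$ is a $c$-sortable word with $T_\ww\simeq T$. The ordering was chosen precisely so that the layer construction reproduces the modules $X_j$: comparing the recursive definition of the $T^j_\ww$ via minimal left $\add\{T^{k+1}_\ww,\dots,T^{j-1}_\ww\}$-approximations with the structure of the AR-sequences in $\Sub T$ (which by Lemma \ref{AR things}(1) exist and sit inside $\mathbb{Z}Q$), one gets $T^j_\ww\simeq X_j$ by induction on $j$; in particular $T^{t_\ww(i)}_\ww$ for $i\in Q_0$ are the maximal $\tau^{-t}P_i$ in $\Sub T$, whose direct sum is exactly $T$ since the indecomposable summands of a tilting module with $\Sub T=\Ff$ are the maximal elements of $\Ff$ under "$\Sub$". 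To see that $\ww$ is reduced and $c$-sortable: reducedness follows because all $L^j_\ww\simeq T^j_\ww=X_j$ are nonzero and pairwise non-isomorphic, using Lemma \ref{csortableifftilting} (if $\ww$ were non-reduced after dropping its last letter, $T_\ww$ would have fewer than $l(c^{(0)})=n$ summands, contradicting that $T_\ww\simeq T$ is a tilting module with $n$ summands); and $c$-sortability then follows from Theorem \ref{Twtilting}'s converse direction, i.e. the theorem that $L^j_\ww\in\mod kQ$ for all $j$ implies $w$ is $c$-sortable — which applies since each $L^j_\ww=X_j$ is a $kQ$-module.

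Finally, uniqueness: if $\ww$ and $\ww''$ are both $c$-sortable with $T_\ww\simeq T\simeq T_{\ww''}$, then by Theorem \ref{Twtilting}(2) both give $\add\{L^1,\dots,L^l\}=\Sub T$, so $l=l(\ww)=l(\ww'')$ equals the number of indecomposables of $\Sub T$, and the $c$-sortable expression of a given element is unique \cite{Rea}; it then suffices to see that $w$ itself is determined, which follows because $\Sub T$ determines the torsionfree class, and $c$-sortable words biject with contravariantly finite torsionfree classes containing the projectives (the one-one correspondence recorded after Proposition \ref{tilting mutation} together with Theorem \ref{Twtilting}).

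\textbf{Main obstacle.} The delicate point is establishing $T^j_\ww\simeq X_j$, i.e. that the abstract approximation-theoretic recursion defining $T^j_\ww$ matches the concrete AR-combinatorics in $\Sub T\subseteq\mathbb{Z}Q$; one must check that the minimal left $\add\{X_{k+1},\dots,X_{j-1}\}$-approximation of $X_k$ (where $k$ is the last earlier index with $u_k=u_j$) is a monomorphism with cokernel $X_j$, which amounts to identifying it with the relevant AR-sequence and using that $\Sub T$ is closed under the operations involved. Getting the ordering of the $X_j$ exactly right so that this works at every step — in particular handling the passage from one $\tau^{-t}$-level to the next — is where the real care is needed.
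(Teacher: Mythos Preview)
Your approach is essentially the paper's: read $\ww$ off the AR-quiver of $\Sub T$ (ordering by $\tau^{-t}$-level and within a level by $c$), identify $T^j_\ww$ with the $j$-th indecomposable via AR-sequences, and use Lemma~\ref{csortableifftilting} for reducedness. Two points deserve tightening.

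First, your detour through Theorem~3.3(2) to obtain $c$-sortability is unnecessary. Your construction already writes $\ww=c^{(0)}c^{(1)}\cdots c^{(m)}$ with $c^{(t)}$ the subword of $c$ supported on $\{i:\tau^{-t}P_i\neq 0\ \text{in}\ \Sub T\}$; since $\tau^{-t}P_i=0$ forces $\tau^{-(t+1)}P_i=0$, the supports are nested, so $\ww$ is $c$-sortable by definition as soon as it is reduced. This is exactly how the paper proceeds, and it avoids the awkwardness that Theorem~3.3(2) only tells you $w$ is $c$-sortable, not that your particular expression $\ww$ is the $c$-sortable one.

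Second, your reducedness argument is garbled. Lemma~\ref{csortableifftilting} requires the \emph{penultimate} prefix to be reduced, so you cannot apply it to $\ww$ directly. The paper (and implicitly you) must take the \emph{minimal} non-reduced prefix $\ww'$ of $\ww$: then $\ww'$ minus its last letter is reduced, Lemma~\ref{csortableifftilting} gives $T_{\ww'}$ fewer than $n$ summands, but your inductive identification $T_{\ww'}\simeq\bigoplus_i\tau^{-m_{\ww'}(i)}P_i$ shows it has exactly $n$. Your phrase ``non-reduced after dropping its last letter'' suggests you see this, but it should be stated as a minimality argument on prefixes.

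Your uniqueness argument is fine in spirit but the final sentence flirts with circularity (the bijection you cite is a \emph{corollary} of this theorem). Cleaner: if $T_\ww\simeq T_{\ww''}$ then $\Sub T_\ww=\Sub T_{\ww''}$, so by Theorem~\ref{Twtilting}(2) the layer sets agree; Corollary~\ref{propertiesofL}(2) forces the same ordering, and then Theorem~\ref{realroot}(1) recovers the word from the sequence of dimension vectors.
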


\begin{proof}

Without loss of generality, we assume $c=s_1s_2\ldots s_n$.
We denote by $\tau$ the AR-translation of $\Sub T$.
For any $i\in Q_0$, we denote by $m(i)$ the minimal number satisfying $\tau^{-m(i)-1}(e_i kQ)=0$, which exists by Lemma \ref{AR things}.
Then for $t\geq 0$ we look at the set 
$$\{i\in Q_0\ |\ \tau^{-t}(e_i kQ)\neq 0\}=\{i_1^{(t)} < i_2^{(t)}<\cdots <i_{p_t}^{(t)}\}$$ 
and define $c^{(t)}:=s_{i_1^{(t)}}s_{i_2^{(t)}}\ldots s_{i_{p_t}^{(t)}}.$ Then the word $\ww:= c^{(0)}c^{(1)}\ldots c^{(m)}$ where $m:=\max \{m(i)\ |\ i\in Q_0\}$ satisfies
$$supp(c^{(m)})\subseteq \ldots \subseteq supp(c^{(1)})\subseteq supp(c^{(0)}).$$

For each expression $\ww=\ww'\ww''$, we define $m_{\ww'}(i)+1$ as the number of $s_i$ ($i\in Q_0$) appearing in $\ww'$.
By using induction on $l(\ww')$, we have
$$T_{\ww'}\simeq \bigoplus_{i\in Q_0}\tau^{-m_{\ww'}(i)}(e_i kQ)$$
by using the almost split sequences in $\Sub T$ and the shape of the AR-quiver of $\Sub T$ given in Lemma \ref{AR things} (2).
In particular the number of indecomposable direct summands of $T_{\ww'}$ is exactly $n$
since $m_{\ww'}(i)\le m(i)$ for any $i\in Q_0$.
Moreover we have $T_\ww\simeq\bigoplus_{i\in Q_0}\tau^{-m(i)}(e_i kQ)\simeq T$
since $m_\ww(i)=m(i)$ for any $i\in Q_0$.

We only have to check that $\ww$ is reduced. 
Otherwise we take an expression $\ww=\ww'\ww''$ such that $\ww'$ is non-reduced and $l(\ww')$ is minimal with this property.
By Lemma \ref{csortableifftilting}, the number of indecomposable direct summands of
$T_{\ww'}$ is less than $n$, a contradiction. Thus $\ww$ is reduced.
\end{proof}

 As a consequence we get the following: 
\begin{cora} If $T$ is a tilting $kQ$-module such that $\Sub T$ is of finite type, then  all indecomposables in $\Sub T$ are rigid as $kQ$-modules.
\end{cora}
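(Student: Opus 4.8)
The plan is to identify the indecomposables of $\Sub T$ with the modules $T^j_\ww$ attached to a $c$-sortable word, and then to observe that each such module sits as an indecomposable summand of a genuine tilting $kQ$-module.

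First I would apply Theorem \ref{SubTfinite}: since $\Sub T$ is of finite type, there is a $c$-sortable word $\ww=s_{u_1}\ldots s_{u_l}$ with $T_\ww\simeq T$. Because $T$ is a tilting module over the \emph{full} algebra $kQ$, it has $n=|Q_0|$ indecomposable summands and contains every indecomposable projective $e_ikQ$; tracing through the construction of $\ww$ in the proof of Theorem \ref{SubTfinite} this forces $supp(c^{(0)})=Q_0$, i.e. $Q^{(0)}=Q$, so all the modules $T^j_\ww$ occurring below are honest $kQ$-modules. By Theorem \ref{Twtilting}(2) the indecomposable objects of $\Sub T=\Sub T_\ww$ are precisely $L^1_\ww,\ldots,L^l_\ww$, pairwise non-isomorphic indecomposables by Corollary \ref{propertiesofL}(1), and by Theorem \ref{LequalT}(2) we have $L^j_\ww\simeq T^j_\ww$. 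Hence it suffices to show that each $T^j_\ww$ is rigid as a $kQ$-module.

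Next I would pass to prefixes. For $j\leq l(c^{(0)})$ there is nothing to prove, since $T^j_\ww=e_{u_j}kQ$ is projective. For $j>l(c^{(0)})$, the prefix $\ww_{\leq j}:=s_{u_1}\ldots s_{u_j}$ is again a reduced $c$-sortable word whose initial Coxeter factor already has support $Q_0$ (as $j>l(c^{(0)})$ forces that factor to be $c^{(0)}$, which has support $Q_0$), so Theorem \ref{Twtilting}(1) applies and $T_{\ww_{\leq j}}$ is a tilting $kQ$-module. By construction $T^j_\ww$ depends only on $u_1,\ldots,u_j$, so $T^j_\ww=T^j_{\ww_{\leq j}}$, and since $u_j$ occurs at the last position $j$ of $\ww_{\leq j}$ this is exactly the summand indexed by $i=u_j$ in the decomposition $T_{\ww_{\leq j}}=\bigoplus_{i\in Q_0}T^{t_{\ww_{\leq j}}(i)}_{\ww_{\leq j}}$ of Theorem \ref{Twtilting}. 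Thus $T^j_\ww$ is a direct summand of a tilting $kQ$-module, whence $\Ext^1_{kQ}(T^j_\ww,T^j_\ww)=0$; equivalently $L^j_\ww$ is rigid as a $kQ$-module.

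The only real obstacle is the bookkeeping around supports: one must check that the prefix $\ww_{\leq j}$ is a $c$-sortable word whose initial factor already has full support $Q_0$, so that Theorem \ref{Twtilting}(1) produces a tilting module over $kQ$ itself rather than over some proper restriction $kQ^{(0)}$. Once this is in place the statement is immediate, since a direct summand of a tilting module over a hereditary algebra is automatically rigid. (An alternative would be to deduce $kQ$-rigidity directly from the $\Lambda$-rigidity of $L^j_\ww$ in Theorem \ref{2spherical} via a change-of-rings argument for the surjection $\Lambda\twoheadrightarrow\Lambda/I_c\simeq kQ$, but routing through the tilting summands reuses machinery already in place and avoids that computation.)
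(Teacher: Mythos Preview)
Your argument is correct. The paper states the corollary with no proof, leaving it as an immediate consequence of Theorem~\ref{SubTfinite} together with Theorem~\ref{Twtilting}; your route via prefixes---showing each $T^j_\ww$ occurs as a summand of the tilting module $T_{\ww_{\le j}}$ produced by the inductive proof of Theorem~\ref{Twtilting}(1)---is exactly the natural way to unpack that implication, and the bookkeeping you flag (that $supp(c^{(0)})=Q_0$ and that prefixes of $c$-sortable words remain $c$-sortable with the same initial factor) is handled correctly.

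One remark: the alternative you mention at the end is not merely an avoidable computation but is in fact shorter. Since $kQ=\Lambda/I_c$, any short exact sequence of $kQ$-modules is in particular a short exact sequence of $\Lambda$-modules, and a $\Lambda$-splitting is automatically $kQ$-linear because $\Lambda$ acts through $kQ$; hence the natural map $\Ext^1_{kQ}(L^j_\ww,L^j_\ww)\to\Ext^1_\Lambda(L^j_\ww,L^j_\ww)$ is injective, and Theorem~\ref{2spherical} finishes at once. This bypasses the prefix argument entirely, though your approach has the virtue of staying purely inside $\mod kQ$ and making the tilting-theoretic content explicit.
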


Combining Theorem \ref{SubTfinite} with Theorem \ref{Twtilting} we get the following result which was first proved using other methods in \cite{Tho}.

\begin{cora} There is   1-1
correspondences 
\begin{itemize}
\item[(a)]$\xymatrix{ \{\textrm{finite torsionfree classes of
  }\mod kQ\textrm{ containing $kQ$}\}\ar^-{1:1}@{<->}[r] & \{ c \textrm{-sortable words with }c^{(0)}=c\}}.$
  \item[(b)]$\xymatrix{ \{\textrm{finite torsionfree classes of }\mod kQ\}\ar^-{1:1}@{<->}[r] & \{ c \textrm{-sortable words}\}}.$
  \end{itemize}
\end{cora}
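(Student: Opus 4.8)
The plan is to deduce both correspondences from Theorems~\ref{SubTfinite} and~\ref{Twtilting}, together with the bijection recalled in Section~1 between tilting $kQ$-modules and contravariantly finite torsionfree classes of $\mod kQ$ containing $kQ$.

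\emph{Part (a).} First I would note that a torsionfree class with only finitely many indecomposable objects is automatically contravariantly finite: the sum of all maps out of the finitely many indecomposables of the class is a right approximation of any module. Hence the finite torsionfree classes of $\mod kQ$ containing $kQ$ are exactly the $\Sub(T)$ for $T$ a tilting $kQ$-module with $\Sub(T)$ of finite type, and $T\mapsto\Sub(T)$ is a bijection onto them. By Theorem~\ref{Twtilting}, for every $c$-sortable word $\ww$ with $c^{(0)}=c$ the module $T_\ww$ is a tilting $kQ$-module (here $Q^{(0)}=Q$ precisely because $supp(c^{(0)})=Q_0$) and $\Sub(T_\ww)=\add\{L^1_\ww,\dots,L^l_\ww\}$ is of finite type; conversely Theorem~\ref{SubTfinite} shows that every such $T$ equals $T_\ww$ for a unique $c$-sortable word $\ww$, and the block decomposition produced there has $c^{(0)}=c$ since $e_ikQ\ne0$ for all $i$. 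Composing the two bijections, $\ww\mapsto\Sub(T_\ww)$ is the bijection of (a).

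\emph{Part (b).} I would reduce (b) to (a) by restricting to the support. Given a finite torsionfree class $\Ff$ of $\mod kQ$, let $J\subseteq Q_0$ be the set of vertices $i$ with $Me_i\ne0$ for some $M\in\Ff$, let $Q^{(0)}:=Q|_J$ be the corresponding full subquiver, and let $c^{(0)}$ be the restriction of $c$ to $J$ (still admissible for $Q^{(0)}$). Since $Q^{(0)}$ is full, $\mod kQ^{(0)}$ is closed under submodules and extensions inside $\mod kQ$, so $\Ff$ is a finite torsionfree class of $\mod kQ^{(0)}$; writing $\Ff=\Sub(T)$ for $T$ the direct sum of the indecomposable $\Ext^1$-projective objects of $\Ff$, one has $supp(T)=J$, so $T$ is a tilting $kQ^{(0)}$-module and in particular $kQ^{(0)}\in\add T\subseteq\Ff$. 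Applying (a) over $(Q^{(0)},c^{(0)})$ yields a unique $c^{(0)}$-sortable word $\ww$ of $W_{Q^{(0)}}$ with $\Sub(T_\ww)=\Ff$; regarded inside $W_Q$ (a standard parabolic subgroup, so lengths and reducedness are unchanged) it is $c$-sortable with $supp(c^{(0)})=J$. Conversely, any $c$-sortable word $\ww=c^{(0)}c^{(1)}\cdots c^{(m)}$ of $W_Q$ lies in $W_{Q^{(0)}}$ for $Q^{(0)}=Q|_{supp(c^{(0)})}$, is $c^{(0)}$-sortable there, and by Lemma~\ref{extend support} has the same layers, hence the same $\Ff:=\Sub(T_\ww)$, whether computed over $Q$ or over $Q^{(0)}$; since $L^1_\ww,\dots,L^{l(c^{(0)})}_\ww$ are the indecomposable projective $kQ^{(0)}$-modules and lie in $\Ff$, the support of $\Ff$ is exactly $supp(c^{(0)})$. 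Partitioning both sides of the desired correspondence according to the subset $J\subseteq Q_0$ and applying (a) on each $Q|_J$ then gives (b).

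The step I expect to be the main obstacle is the claim used in (b) that, after restricting $\Ff$ to its support quiver $Q^{(0)}$, every indecomposable projective $kQ^{(0)}$-module lies in $\Ff$. This rests on the (standard, hereditary-case) description of a functorially finite torsionfree class as $\Sub$ of its $\Ext^1$-projective generator $T$, together with the identity $supp(T)=J$: the inclusion $supp(T)\subseteq J$ is clear since $T\in\Ff$, and the reverse holds because any $M\in\Ff$ embeds into a power of $T$, whence $supp(M)\subseteq supp(T)$. Everything else is routine bookkeeping, using Lemma~\ref{extend support} to identify layers computed over $Q$ and over $Q^{(0)}$, and the fact that a word in the generators $s_i$ with $i\in J$ is $c$-sortable in $W_Q$ if and only if it is $c^{(0)}$-sortable in $W_{Q^{(0)}}$.
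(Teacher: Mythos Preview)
Your proposal is correct and follows the same route the paper indicates: the corollary is stated there as an immediate consequence of Theorems~\ref{Twtilting} and~\ref{SubTfinite}, with no further argument, and you have supplied exactly the details that this combination requires. For part~(a) your argument is complete; the observation that $c^{(0)}=c$ in the word produced by Theorem~\ref{SubTfinite} (because $\tau^{0}(e_ikQ)=e_ikQ\neq0$ for every $i$) is the only point one might want to make explicit, and you do.

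For part~(b) your reduction to the support subquiver $Q^{(0)}$ is the natural one, and the use of Lemma~\ref{extend support} to identify layers over $Q$ and over $Q^{(0)}$ is exactly right. The one step that goes slightly beyond what the paper records in its background is the assertion that the $\Ext^1$-projective generator $T$ of a finite torsionfree class $\Ff$ is a tilting $kQ^{(0)}$-module (equivalently, that $kQ^{(0)}\in\Ff$): Section~1 only states the bijection for torsionfree classes containing all of $kQ$. What you are using is the extension to \emph{support} tilting modules, which is standard (and is precisely the content of \cite{IT}, already cited in the paper). Your justification that $\operatorname{supp}(T)=J$ is fine, but the further claim that $T$ then has $|J|$ non-isomorphic indecomposable summands is where this external input enters; it would be worth naming it explicitly rather than folding it into ``so $T$ is a tilting $kQ^{(0)}$-module''.
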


\subsection{Co-$c$-sortable situation}
Dually, we can state the defintion.
\begin{dfa}
 Let $c$ be a Coxeter element of the Coxeter group $W_Q$ admissible with respect to the orientation of $Q$. 
An element $w$ of $W_Q$ is called \emph{co-c-sortable} if there exists a reduced expression $\ww=s_{u_l}\ldots s_{u_1}$ of $w$ of the form $\ww=c^{(m)}c^{(m-1)}\ldots c^{(0)}$ where all $c^{(t)}$ are subwords of $c$ whose supports satisfy 
$$supp(c^{(m)})\subseteq supp(c^{(m-1)})\subseteq \ldots \subseteq supp(c^{(1)})\subseteq supp(c^{(0)})\subseteq Q_0.$$
Note that this definition is equivalent to the fact that $\ww^{-1}=s_{u_1}\ldots s_{u_l}$ is $c^{-1}$-sortable.

We denote by $Q^{(j)}$ the quiver $Q$ restricted to the support of $c^{(j)}$.
\end{dfa}

From a $c$-sortable word $\ww$ we define in this subsection, $kQ^{(0)}$-modules $T^\ww_j$, $T^\ww$ and $U^\ww_j$  in a dual manner to the modules $T^j_\ww$,  $T_\ww$ and $U^j_\ww$ defined in the previous subsections.
 
 \medskip
 
For $j=1,\ldots, l$, we define $kQ^{(0)}$-modules $T^\ww_j$. For $1\leq
j\leq l(c^{(0)})$, $T_j^\ww$ is the injective $kQ^{(0)}$-module $e_{u_j}D(kQ^{(0)})$. For
$j>l(c^{(0)})$, let $k$ be the maximal integer such that $k<j$ and
$u_k=u_j$. We define $T_j^\ww$ as the kernel of the map
$$ f_j^\ww:E\rightarrow T_k^\ww$$
where $f_j^\ww$ is a minimal right $\add\{T_{k+1}^\ww,\ldots,T^\ww_{j-1}\}$-approximation.

\medskip

Then we define a $kQ$-module $T^\ww$ as the direct sum $T^\ww=\bigoplus_{i\in Q^{(0)}_0}T^\ww_{t_\ww(i)}$, where $t_\ww(i)$ is the maximal integer such that $u_{t_\ww(i)}=i$.

\medskip
A \emph{co-admissible triple} is a triple $(Q,c,\ww)$ consisting of an
acyclic quiver $Q$, a Coxeter element $c$ admissible with the
orientation of $Q$, and a co-$c$-sortable word $\ww=c^{(m)}c^{(m-1)}\ldots c^{(0)}$.

Let $(Q,c,\ww)$ be a co-admissible triple with $\ww=s_{u_l}s_{u_{l-1}}\ldots
s_{u_1}$. For $j=1,\ldots, l$, we define $kQ$-modules $U^\ww_j$ by induction on $l$. 

If $l=1$ then we define $U_1^\ww=e_{u_1}D(kQ^{(0)})$, the simple injective $kQ^{(0)}$-module associated to the vertex $u_1$. 

Assume $l\geq 2$. Then we write $\ww=\ww' s_{u_1}$.
It is not hard to check that the triple
\[(Q'=\mu_{u_1}(Q^{(0)}), s_{u_1}c^{(0)}s_{u_1}, \ww')\]
is a co-admissible triple with $l(w')=l-1$. Therefore by the induction hypothesis we have $kQ'$-modules $U_1^{\ww'},\ldots U_{l-1}^{\ww'}$. For $j=2,\ldots, l$ we define 
$$U_j^\ww =R_{u_1}(U_{j-1}^{\ww'})$$
where $R_{u_1}$ is the reflection functor
$$ \xymatrix{\mod kQ'=\mod k(\mu_{u_1}Q^{(0)})\ar[rr]^(.6){R_{u_1}} && \mod kQ^{(0)}}$$ 
at the sink $u_1$ of $Q^{(0)}$.

Then a dual version of Theorems \ref{LequalT} (2) and \ref{Twtilting} hold. More precisely we have the following.

\begin{thma}
Let $Q$ be an acyclic quiver and $c$ be the associated Coxeter element. Then for a co-$c$-sortable element $\ww$ the following hold.
\begin{itemize}
\item[(a)] For all $j=1, \ldots, l$, we have $T^\ww_j\simeq U^\ww_j$, and $f^\ww_j$ is an epimorphism.
\item[(b)] The subcategory $\Fac(T^\ww)$ is finite, and $\Fac(T^\ww)=\add\{T^\ww_1,\ldots,T^\ww_l\}$.
\end{itemize}
\end{thma}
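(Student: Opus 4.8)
The plan is to deduce everything by \emph{duality} from the already established $c$-sortable statements, namely Theorem~\ref{LequalT}(2) and Theorem~\ref{Twtilting}. First I would fix the standard duality $D=\Hom_k(-,k)\colon \mod kQ^{(0)}\to\mod kQ^{(0)\,{\rm op}}$ and note that if $\ww=c^{(m)}\cdots c^{(0)}$ is co-$c$-sortable, then $\ww^{-1}=c^{(0)\,{\rm op}}\cdots c^{(m)\,{\rm op}}$ read in $W_{Q^{\rm op}}$ is $c^{-1}$-sortable, with $c^{-1}$ admissible for the orientation of $Q^{\rm op}$. The key observation is that the dual constructions are compatible with $D$: the injective $kQ^{(0)}$-module $e_{u_j}D(kQ^{(0)})$ is $D$ of the projective $e_{u_j}kQ^{(0)\,{\rm op}}$; a minimal right $\add\{T^\ww_{k+1},\dots,T^\ww_{j-1}\}$-approximation $f^\ww_j\colon E\to T^\ww_k$ applied by $D$ becomes a minimal left $\add\{DT^\ww_{k+1},\dots,DT^\ww_{j-1}\}$-approximation, and $\Ker f^\ww_j$ becomes $\Coker(Df^\ww_j)$; and on the reflection-functor side, $R_{u_1}=\Hom_{kQ'}(\tau^-P_{u_1}\oplus kQ'/P_{u_1},-)$ at a sink corresponds under $D$ to the functor $R_{u_1}^-$ at the source of $(Q^{(0)})^{\rm op}$. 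Hence $D$ carries the triple $(Q,c,\ww)$ to the admissible triple $(Q^{\rm op},c^{-1},\ww^{-1})$ and intertwines $T^\ww_j$ with $T^{\ww^{-1}}_j$, $U^\ww_j$ with $U^{\ww^{-1}}_j$, and $T^\ww$ with $T_{\ww^{-1}}$.

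Granting this dictionary, part (a) is immediate: by Theorem~\ref{LequalT}(2) applied to the $c^{-1}$-sortable word $\ww^{-1}$ we have $T^{j}_{\ww^{-1}}\simeq U^{j}_{\ww^{-1}}$ and the approximation map $f^{j}_{\ww^{-1}}$ is a monomorphism for $j>l(c^{(0)})$; applying $D$ gives $T^\ww_j\simeq U^\ww_j$ and that $f^\ww_j$ is an epimorphism, and for $j\le l(c^{(0)})$ the isomorphism $T^\ww_j\simeq U^\ww_j$ is again the elementary property of reflection functors (now at sinks). Likewise part (b) follows from Theorem~\ref{Twtilting}: $D$ is an exact anti-equivalence $\mod kQ^{(0)}\to\mod kQ^{(0)\,{\rm op}}$ sending $\Fac(T^\ww)$ to $\Sub(D(T^\ww))=\Sub(T_{\ww^{-1}})$, and $\Sub(T_{\ww^{-1}})=\add\{L^1_{\ww^{-1}},\dots,L^l_{\ww^{-1}}\}=\add\{T^1_{\ww^{-1}},\dots,T^l_{\ww^{-1}}\}$ is finite and has the displayed form; transporting back along $D$ yields $\Fac(T^\ww)=\add\{T^\ww_1,\dots,T^\ww_l\}$, in particular finite.

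The one point that needs genuine care — and which I would spell out carefully rather than wave at — is that the inductive \emph{definitions} of $T^\ww_j$ and $U^\ww_j$ really do match the definitions of $T^j_{\ww^{-1}}$ and $U^j_{\ww^{-1}}$ under $D$, \emph{including the bookkeeping of indices}. Concretely one must check that reversing the word $\ww=s_{u_l}\cdots s_{u_1}$ to $\ww^{-1}=s_{u_1}\cdots s_{u_l}$ sends "the maximal $k<j$ with $u_k=u_j$" in one convention to the corresponding index in the other, that the co-admissible triple $(Q'=\mu_{u_1}(Q^{(0)}), s_{u_1}c^{(0)}s_{u_1},\ww')$ with $\ww=\ww's_{u_1}$ goes under $D$ to the admissible triple used in the definition of $U^j$, and that $u_1$ being a sink of $Q^{(0)}$ corresponds to $u_1$ being a source of $(Q^{(0)})^{\rm op}$. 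I expect this indexing/compatibility verification to be the main obstacle: it is entirely routine but must be done honestly, since the whole proof is "apply $D$ and cite the $c$-sortable theorem" and nothing is proved unless the two setups are shown to be literally dual. Once the dictionary is in place, no further argument is needed.
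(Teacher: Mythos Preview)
Your proposal is correct and is exactly the approach the paper itself takes: the paper gives no proof at all for this theorem, merely introducing it with ``Then a dual version of Theorems~\ref{LequalT}~(2) and~\ref{Twtilting} hold'', so your explicit use of the $k$-duality $D$ to transport the $c$-sortable results for $(Q^{\rm op},c^{-1},\ww^{-1})$ back to the co-$c$-sortable setting is precisely what is intended. Your careful bookkeeping of the indexing, the sink/source correspondence, and the compatibility $D\circ R_{u_1}^-\simeq R_{u_1}\circ D$ fills in the details the paper omits.
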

Dually to Theorem \ref{SubTfinite}, we also have the following.
\begin{thma}
For any tilting module $T\in\mod kQ$ such that $\Fac T$ is finite, then the following hold.
\begin{itemize}
\item[(a)] The AR-quiver of $\Fac (T)$ is a full subquiver of $\mathbb{Z}Q$.
 \item[(b)] There exists a (unique) co-$c$-sortable word $\ww$ (which can be constructed from the AR-quiver of $\Fac (T)$) such that $T\simeq T^\ww$.
 \end{itemize}
\end{thma}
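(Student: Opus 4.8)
The plan is to obtain this statement by dualizing Theorems \ref{SubTfinite}, \ref{AR things} and \ref{Twtilting}, exactly as the co-$c$-sortable notions were obtained by dualizing the $c$-sortable ones. Concretely, I would apply the $k$-duality functor $D=\Hom_k(-,k)$, which sends $\mod kQ$ to $\mod kQ^{op}$ and interchanges the subcategory $\Fac(T)$ of $\mod kQ$ with the subcategory $\Sub(DT)$ of $\mod kQ^{op}$, sending injective $kQ$-modules to projective $kQ^{op}$-modules and the AR-translate $\tau$ to $\tau^{-1}$. Since $DT$ is then a tilting $kQ^{op}$-module with $\Sub(DT)$ of finite type, Theorem \ref{SubTfinite} applied to $kQ^{op}$ (with the Coxeter element $c^{-1}$, which is admissible for the orientation of $Q^{op}$) furnishes a unique $c^{-1}$-sortable word $\mathbf{v}$ such that $T_{\mathbf{v}}\simeq DT$ as $kQ^{op}$-modules.

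For part (a), I would first note that Lemma \ref{AR things}(2) applied to the tilting $kQ^{op}$-module $DT$ says that the AR-quiver of $\Sub(DT)$ is a full subquiver of $\mathbb{Z}Q^{op}$; applying $D$ (which reverses arrows and is an anti-equivalence) this transports to the statement that the AR-quiver of $\Fac(T)$ is a full subquiver of $\mathbb{Z}Q$. For part (b), set $\ww:=\mathbf{v}^{-1}$, which is co-$c$-sortable by the definition given just above the statement (where we use that $c^{-1}$-sortability of $\mathbf{v}$ is equivalent to co-$c$-sortability of $\mathbf{v}^{-1}$). One must check that under $D$ the dual modules $T^\ww_j$ constructed from $\ww$ correspond to the modules $T^j_{\mathbf{v}}$ constructed from $\mathbf{v}$: this is immediate from the definitions, since $D$ sends the injective $kQ^{(0)}$-module $e_{u_j}D(kQ^{(0)})$ to the projective $kQ^{(0)op}$-module $e_{u_j}kQ^{(0)op}$, and sends a minimal right $\add\{T^\ww_{k+1},\ldots,T^\ww_{j-1}\}$-approximation $f^\ww_j:E\to T^\ww_k$ with $T^\ww_j=\Ker f^\ww_j$ to a minimal left $\add\{T^{k+1}_{\mathbf{v}},\ldots,T^{j-1}_{\mathbf{v}}\}$-approximation with cokernel $DT^\ww_j = T^j_{\mathbf{v}}$. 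Hence $D(T^\ww)\simeq T_{\mathbf{v}}\simeq DT$, giving $T^\ww\simeq T$. Uniqueness of $\ww$ follows from uniqueness of $\mathbf{v}$ in Theorem \ref{SubTfinite} together with the bijectivity of $\mathbf{v}\mapsto\mathbf{v}^{-1}$.

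The main thing to be careful about — and the step I expect to be the only real obstacle — is keeping the bookkeeping between ``co-$c$-sortable for $(Q,c)$'' and ``$c^{-1}$-sortable for $(Q^{op},c^{-1})$'' consistent, in particular making sure that $c^{-1}$ really is admissible with respect to the orientation of $Q^{op}$ (it is: reversing all arrows and reversing the order of the simple reflections in $c$ preserves the admissibility condition $i<j$ when there is an arrow $u_i\to u_j$) and that the decompositions $\ww=c^{(m)}\cdots c^{(0)}$ and $\mathbf{v}=c^{(0)}{}^{op}\cdots c^{(m)}{}^{op}$ match up with the correct support-inclusion direction. Once this is set up, every assertion is a mechanical transport of an already-proven statement through the duality $D$, so no genuinely new argument is needed; I would simply remark in the proof that it is dual to the proofs of Theorems \ref{SubTfinite} and \ref{AR things} and omit the routine verifications.
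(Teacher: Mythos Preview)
Your proposal is correct and is exactly the argument the paper intends: the paper gives no proof of this theorem beyond the phrase ``Dually to Theorem \ref{SubTfinite}, we also have the following,'' so transporting Lemma \ref{AR things} and Theorem \ref{SubTfinite} through the $k$-duality $D:\mod kQ\to\mod kQ^{op}$ (interchanging $\Fac$ with $\Sub$, injectives with projectives, right with left approximations, and $c$-sortable for $(Q^{op},c^{-1})$ with co-$c$-sortable for $(Q,c)$) is precisely what is meant. Your careful bookkeeping on the indexing $u_j=v_j$ and on admissibility of $c^{-1}$ for $Q^{op}$ is all that is needed; no new idea is required.
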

\begin{cora} There is  1-1
correspondences 
\begin{itemize}
\item[(a)]$\xymatrix{ \{\textrm{finite torsion classes of
  }\mod kQ\textrm{ containing $D(kQ)$}\}\ar^-{1:1}@{<->}[r] & \{ \textrm{co-}c \textrm{-sortable words with }c^{(0)}=c\}}.$
  \item[(b)]$\xymatrix{ \{\textrm{finite torsion classes of
  }\mod kQ\}\ar^-{1:1}@{<->}[r] & \{ \textrm{co-}c \textrm{-sortable words}\}}.$
  \end{itemize}
\end{cora}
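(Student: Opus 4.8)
The plan is to mirror the proof of the preceding corollary, combining the two theorems of this subsection with the correspondence between tilting modules and functorially finite torsion classes; conceptually this is just the image of the $c$-sortable corollary under the $k$-duality $D=\Hom_k(-,k)\colon\mod kQ\to\mod kQ^{op}$, which interchanges torsion and torsionfree classes, sends $\Fac$ to $\Sub$ and $D(kQ)$ to $kQ^{op}$, preserves finiteness, carries the Coxeter element $c$ admissible for $Q$ to the one admissible for $Q^{op}$, namely $c^{-1}$, and combines with $\ww\mapsto\ww^{-1}$ (which matches co-$c$-sortable words in $W_Q$ with $c^{-1}$-sortable words in $W_{Q^{op}}$).

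Concretely, I would first send a co-$c$-sortable word $\ww$ to the torsion class $\Fac(T^\ww)$: by the dual of Theorem~\ref{Twtilting}, $T^\ww$ is a tilting $kQ^{(0)}$-module and $\Fac(T^\ww)=\add\{T^\ww_1,\ldots,T^\ww_l\}$, so this is a finite torsion class of $\mod kQ$, and it contains $D(kQ)$ if and only if $c^{(0)}=c$ (only then is $Q^{(0)}=Q$, and then the $T^\ww_j$ with $j\le l(c^{(0)})$ are exactly the indecomposable injective $kQ$-modules; whereas if $i\notin Q^{(0)}_0$ the $kQ$-injective at $i$ is not a $kQ^{(0)}$-module, hence not in $\Fac(T^\ww)\subseteq\mod kQ^{(0)}$). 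Conversely, a finite, hence functorially finite, torsion class $\Tt$ of $\mod kQ$ has, over its support $Q^{(0)}$, an Ext-projective generator $T$ which is a tilting $kQ^{(0)}$-module with $\Fac T=\Tt$; by the dual of Theorem~\ref{SubTfinite} applied over $kQ^{(0)}$ there is a unique co-$c$-sortable word $\ww$ with $T^\ww\simeq T$, hence $\Fac(T^\ww)=\Tt$. This proves (b), and restricting to $c^{(0)}=c$ on one side and to torsion classes containing $D(kQ)$ on the other proves (a).

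The routine part is checking that the inductive construction of $T^\ww$ dualizes to that of $T_{\ww^{-1}}$ under $D$: $D$ exchanges sinks with sources, right with left approximations, kernels with cokernels, and $R_{u_1}^{-}$ at a source with $R_{u_1}$ at a sink, so the induction on $l(\ww)$ transports without change, and consequently the bijection above agrees with the dual of the $c$-sortable one. The main obstacle --- already present in, and to be imported from, the $c$-sortable corollary --- is the bookkeeping around proper supports: verifying that every finite torsion class of $\mod kQ$ is indeed $\Fac T$ for a tilting module $T$ over its support $Q^{(0)}$ (i.e. corresponds to a support-tilting module) and that this $Q^{(0)}$ is the support of the $c^{(0)}$ attached to the associated co-$c$-sortable word. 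Granting that, everything else is formal.
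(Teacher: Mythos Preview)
Your proposal is correct and follows essentially the same approach as the paper. The paper gives no proof at all for this corollary (nor for the preceding dual theorems); it is simply stated as the evident dual of the $c$-sortable corollary, which in turn is obtained by ``combining Theorem~\ref{SubTfinite} with Theorem~\ref{Twtilting}''. Your argument does exactly this---use the dual of Theorem~\ref{Twtilting} for one direction and the dual of Theorem~\ref{SubTfinite} for the other---and adds the conceptual remark that everything is the image of the $c$-sortable statement under the $k$-duality $D$, which is a helpful but not required observation. The support bookkeeping you flag is indeed not spelled out in the paper either; it is tacitly absorbed into the passage from part~(a) to part~(b) in both corollaries.
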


Let $\ww$ be a co-$c$-sortable word with $c^{(0)}=c$. We denote by $A_\ww$ (resp. $\overline{A}_\ww$) the Auslander algebra of the category $\Fac (T^\ww)$ (resp. $\overline{\Fac}(T^\ww)=\Fac (T^\ww)/ \add(DkQ)$), that is $$A_\ww=\End_{kQ}(\bigoplus_{j=1}^l T^\ww_j) \quad \textrm{and} \quad \overline{A}= \overline{\End}_{kQ}(\bigoplus_{j=1}^l T^\ww_j)=\End_{kQ} (\bigoplus_{j=l(c)+1}^l T^\ww_j).$$  
Following \cite[Theorem 5.21]{Ami3} we get the following result:
\begin{thma}\label{equivalence}
There exists a commutative diagram of triangle functors:
$$\xymatrix@C=2cm{\Dd^b(\overline{A}_\ww)\ar[r]^-{\textrm{Res.}} \ar[d]& \Dd^b(A_\ww)\ar[r]^{-\lten_{A_\ww}M_\ww} & \Dd^b(\Lambda_w)\ar[d] \\
\Cc_2(\overline{A}_\ww)\ar[rr]^F && \underline{\Sub}\Lambda_w
 ,}$$ where $\Cc_2(\overline{A}_\ww)$ is the generalized $2$-cluster category defined in \cite{Ami3} associated with the algebra $\overline{A}_\ww$ of global dimension at most 2, and where $F$ is an equivalence of categories.  
\end{thma}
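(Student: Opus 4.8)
The plan is to show that the construction and the argument of \cite[Theorem~5.21]{Ami3} go through for \emph{every} co-$c$-sortable word $\ww$, whereas in \emph{loc.\ cit.} only a special subclass is treated. First I would collect the facts about the torsion class $\Fac(T^\ww)$ that make the statement meaningful. By the dual of Theorem~\ref{SubTfinite}, $\Fac(T^\ww)$ has only finitely many indecomposable objects; by the dual of Lemma~\ref{AR things}(2) its AR-quiver is a full subquiver of $\mathbb{Z}Q$; and $\Fac(T^\ww)=\add\{T^\ww_1,\dots,T^\ww_l\}$, with Ext-injective objects exactly the summands of $DkQ$ (namely the $T^\ww_j$ with $j\le l(c)$). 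Consequently the Auslander algebra $A_\ww=\End_{kQ}(\bigoplus_{j=1}^l T^\ww_j)$ is finite-dimensional of global dimension at most $2$, and $\overline{A}_\ww=A_\ww/A_\ww e' A_\ww$, where $e'$ is the idempotent on the injective summands, is the stable Auslander algebra of $\Fac(T^\ww)$; the shape of the AR-quiver forces $\overline{A}_\ww$ to have global dimension at most $2$ by exactly the computation of \cite{Ami3}, so that $\Cc_2(\overline{A}_\ww)$ is defined.

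Next I would assemble the diagram. The bimodule $M_\ww$ is the standard cluster-tilting object of the Frobenius category $\Sub\Lambda_w$ from Theorem~\ref{birsc}, viewed as an $A_\ww$-$\Lambda_w$-bimodule through the identification $\End_\Lambda(M_\ww)\cong A_\ww$ (which follows from the co-$c$-sortable analogues of Theorems~\ref{LequalT} and~\ref{Twtilting} stated above, together with \cite{IR2}). Since $M_\ww$ is a tilting $\Lambda_w$-module, $-\lten_{A_\ww}M_\ww$ is a triangle equivalence $\Dd^b(A_\ww)\xrightarrow{\ \sim\ }\Dd^b(\Lambda_w)$; the functor ``Res.'' is restriction of scalars along the surjection $A_\ww\twoheadrightarrow\overline{A}_\ww$, fully faithful onto the subcategory of $\Dd^b(A_\ww)$ of complexes annihilated by $e'$; the left vertical is the canonical functor to the generalized cluster category; and the right vertical $\Dd^b(\Lambda_w)\to\underline{\Sub}\Lambda_w$ is the natural functor attached to the cluster-tilting object $M_\ww$. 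The content of \cite[Theorem~5.21]{Ami3} is that this right vertical is a triangle localization which, transported to $\Dd^b(A_\ww)$ by the equivalence above, has kernel exactly the thick subcategory generated by the image of ``Res.'' (the injective $kQ$-modules), matching the kernel of the left vertical; hence the top composite $\Dd^b(\overline{A}_\ww)\to\underline{\Sub}\Lambda_w$ factors through the canonical functor, the induced $F$ is dense and is fully faithful on the generating cluster-tilting object, and is therefore a triangle equivalence making the square commute.

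The main obstacle is precisely the step that \cite{Ami3} carries out only for its special subclass: verifying that $\overline{A}_\ww$ has global dimension at most $2$, and that the kernel of $\Dd^b(\Lambda_w)\to\underline{\Sub}\Lambda_w$ is generated by the Ext-injectives of $\Fac(T^\ww)$, for an \emph{arbitrary} co-$c$-sortable $\ww$. Both statements are controlled entirely by the structure of $\Fac(T^\ww)$ -- its finiteness, the description $\Fac(T^\ww)=\add\{T^\ww_j\}$, the embedding of its AR-quiver in $\mathbb{Z}Q$, and the identification of its Ext-injectives with $DkQ$ -- which are supplied by the dual forms of Lemma~\ref{AR things} and Theorems~\ref{Twtilting} and~\ref{SubTfinite} proved in this section. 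Once these are in hand, the remainder of \cite[Theorem~5.21]{Ami3} transcribes without change.
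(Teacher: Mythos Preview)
Your high-level strategy matches the paper's exactly: both observe that the argument of \cite[Theorem~5.21]{Ami3}, written there under the extra hypothesis that $T^\ww$ lies in the preinjective component (equivalently that $\End_{kQ}(T^\ww)$ is concealed), in fact uses only that $\Fac(T^\ww)$ is finite and that its AR-quiver is a full subquiver of $\mathbb{Z}Q$. These two inputs are supplied for an arbitrary co-$c$-sortable $\ww$ by the duals of Lemma~\ref{AR things} and Theorem~\ref{Twtilting}. The paper's own justification is a single sentence to this effect and does not attempt to rehearse the internal mechanics of \cite{Ami3}.

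Where your sketch runs into trouble is in the attempted reconstruction of those mechanics. The assertion that $M_\ww$ is a tilting $\Lambda_w$-module, so that $-\lten_{A_\ww}M_\ww$ is a triangle \emph{equivalence} $\Dd^b(A_\ww)\to\Dd^b(\Lambda_w)$, is not correct in general: $\Lambda_w$ is $1$-Iwanaga--Gorenstein but typically of infinite global dimension (for instance when $w$ is the longest element in Dynkin type, where $\Lambda_w=\Lambda$ is selfinjective), whereas $A_\ww$, being the Auslander algebra of a category of finite representation type, has finite global dimension; the two bounded derived categories therefore cannot be equivalent. Your subsequent ``transport'' of the right-hand vertical through this purported equivalence consequently does not go through as written. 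The actual route in \cite{Ami3} is different: one analyses the composite $\Dd^b(\overline{A}_\ww)\to\underline{\Sub}\Lambda_w$ directly and appeals to the recognition theorem for generalized cluster categories, rather than passing through a tilting equivalence over $\Lambda_w$. Likewise, the identification $\End_\Lambda(M_\ww)\cong A_\ww$ is part of what must be verified --- this is where the embedding of the AR-quiver of $\Fac(T^\ww)$ into the Gabriel quiver of $\End_\Lambda(M_\ww)$, noted in the Remark following the theorem, is actually used --- and does not follow from Theorems~\ref{LequalT}, \ref{Twtilting} and \cite{IR2} alone.
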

The proof in \cite{Ami3} deals with $T^\ww$ in the preinjective component of the AR-quiver of $\mod kQ$ (that is $\End_{kQ}(T^\ww)$ concealed), but the proof only uses the fact that $\Fac(T^\ww)$ is finite and that the AR-quiver of $\Fac(T^\ww)$ is a full subquiver of $\mathbb{Z}Q$.   

\medskip
Note that for any element $w$, the $2$-CY category $\underline{\Sub}\Lambda_w$ is equivalent to a generalized $2$-cluster category $\Cc_2(A)$ \cite{ART}, but the algebra $A$ of global dimension 2 is constructed in a very different way. A link between the construction given here and the construction of \cite{ART} is given in \cite{Ami4}. 

\begin{rema}
The proof in \cite{Ami3} does not carry over for $c$-sortable words. Indeed, for a general co-$c$-sortable word, the AR-quiver of $\Fac (T^\ww)$ is a subquiver of the quiver of $\End_\Lambda (M_\ww)$, fact which is used in the proof. For a $c$-sortable word $\ww$, the AR-quiver of $\Sub (T_\ww)$ is not a subquiver of the quiver of $\End_\Lambda (M_\ww)$.
\end{rema}

\section{Problems and examples}

In this section we discuss some possible generalizations of the description of the layers in terms of tilting modules, beyond the $c$-sortable case. We pose some problems and give some examples to illustrate limitations for what might be true.

Recall from Section 2 that to a reduced expression $\ww$ of an element $w$ in $W_Q$ we have associated a set $\{L^j_\ww\}$ of $l(w)$ indecomposable rigid $\Lambda$-modules which we call layers, and which are indecomposable rigid $kQ$-modules when $\ww$ is $c$-sortable, where $c$ is admissible with respect to the orientation of $Q$. Under the same assumption (\emph{i.e.} $\ww$ is $c$-sortable), we constructed a set $\{T^j_\ww\}$ of $l(w)$ indecomposable $kQ$-modules via minimal left approximations, starting with the tilting module $kQ$, and ending up with a tilting module $T_\ww$. All minimal left approximations were monomorphisms. We showed that the two sets of indecomposable modules coincide. In particular, the module $L_\ww:=L^{t_\ww(1)}_\ww\oplus \cdots \oplus L^{t_\ww(n)}_\ww$, where for $i\in Q_0=\{1,\ldots,n\}$ the integer $t_\ww(i)$ is the position of the last reflection $s_i$ in the word $\ww$, is a tilting module over $kQ$.
 
We now consider the case of words $\ww$ with the assumption that $\ww=c\ww'$, where $c$ is a Coxeter element admissible with respect to the orientation of $Q$. When $\ww=c s_{u_{n+1}}\ldots s_{u_l}$ is a word, we define $T_\ww$ to be a tilting module associated with $\ww$ if it is possible to carry out the following. Start with $kQ=P_1\oplus \cdots \oplus P_n$, where $P_i$ is the indecomposable projective $kQ$-module associated with the vertex $i$. If possible, exchange $P_{u_{n+1}}$ with a non-isomorphic indecomposable $kQ$-module to get a tilting module $T'=kQ/P_{u_{n+1}}\oplus P_{u_{n+1}}^*$, then replace summand number $i_2$ in $T'$ by a non-isomorphic indecomposable $kQ$-module to get a new tilting module $T''$, etc. If an exchange is possible at each step, we obtain a tilting module $T_\ww$. We say that a word $\ww=c\ww'$ starting with a Coxeter element is \emph{tilting} if $T_\ww$ exists, and $\ww$ is \emph{monotilting} if morever $T_\ww$ is obtained by only using left
  approximations. Hence $c$-sortable words are examples of monotilting words.

It is natural to ask the following question about tilting and monotilting words.

\begin{pb}:\begin{itemize}
\item[(a)] Characterize the tilting words $\ww$. In particular is every reduced word $\ww=c\ww'$ starting with a Coxeter element tilting?
\item[(b)] Characterize the monotilting words.
\item[(c)] When do two tilting words $\ww_1$ and $\ww_2$ give rise to the same tilting module? Or formulated differently, for which tilting words $\ww$ do we have $T_\ww\simeq kQ$? 
                    \end{itemize}
\end{pb}

Note that all these questions can also be translated into combinatorial problems for acyclic cluster algebras.

Note that non-reduced words may be monotilting as the following example shows.

\begin{exa}
 Let $Q$ be the quiver $\xymatrix@-.5cm@R=-1mm{ &2\ar[dr] & \\ 1\ar[dr]\ar[ur] && 4\\ & 3\ar[ur]&} $ and $\ww:=s_1s_2s_3s_4s_3s_1s_4$. Then $\ww$ is not reduced. One can easily check that we have 
 $$T^1_{\ww}={\bsm 1\esm}, \ T^2_{\ww}={\bsm 2\\1\esm}, \ T^3_{\ww}={\bsm 3\\1\esm}, \ T^4_{\ww}={\bsm &&4&&\\ &2&&3&\\1&&&&1\esm}, \ T^5_{\ww}={\bsm 4\\2\\1\esm}.$$
 Then the minimal left $\add(T^2_\ww\oplus T^4_\ww \oplus T^5_\ww )$-approximation of $T^1_{\ww}$ is ${\bsm 1\esm}\to {\bsm 2\\1\esm}\oplus{\bsm &&4&&\\ &2&&3&\\1&&&&1\esm}$. It is a monomorphism whose cokernel is $T^6_{\ww}={\bsm &&4&&&\\ &2&&3&&2\\1&&&&1&\esm}.$ The minimal left $\add(T^2_\ww\oplus T^5_\ww \oplus T^6_\ww )$-approximation of $T^4_{\ww}$ is ${\bsm &&4&&\\ &2&&3&\\1&&&&1\esm}\to{\bsm &&4&&&\\ &2&&3&&2\\1&&&&1&\esm}$. It is a monomorphism whose cokernel is $T^7_\ww={\bsm 2\esm}$.
 Hence $\ww$ is monotilting. 
\end{exa}

Recall that in the $c$-sortable case, then $\ww$ is monotilting and $\Sub T_\ww$ is of finite type. This is not the case in general.

\begin{exa}
Let $Q$ be the quiver $\xymatrix@-.5cm@R=-1mm{ &2\ar[dr] & \\ 1\ar[dr]\ar[ur] && 4\\ & 3\ar[ur]&} $ and $\ww:=s_1s_2s_3s_4s_2s_3s_4s_1$. Then one can show that $\ww$ is monotilting and that 
$T_\ww= {\bsm 4&&&&4\\ &3&&2&\\ &&1&&\esm}\oplus {\bsm 4\\3\\1\esm}\oplus {\bsm 4\\2\\1\esm}\oplus {\bsm 4\esm}.$
Then one can check easily that all the modules of the form ${\bsm &&4&&\\&2&&3&\\ 1&&&&1\esm},\ {\bsm &&4&&&&4&&\\&2&&3&&2&&3&\\1&&&&1&&&&1\esm},\ {\bsm &&4&&&&4&&&&4&&\\&2&&3&&2&&3&&2&&3&\\1&&&&1&&&&1&&&&1\esm},\ldots$ are in $\Sub({\bsm 4&&&&4\\ &3&&2&\\ &&1&&\esm})$.
\end{exa}

However, it may happen that $\Sub T_\ww$ is of finite type for a tilting word $\ww$ which is not $c$-sortable. It follows from Theorem \ref{SubTfinite} that there exists a unique $c$-sortable word $\tilde{\ww}$ such that $T_\ww=T_{\tilde{\ww}}$. We then pose the following.

\begin{pb}:
\begin{itemize}
 \item[(a)] Characterize the tilting words $\ww$ with $\Sub T_\ww$ finite.
\item[(b)] For such words $\ww$, how can we construct the unique $\tilde{\ww}$ such that $T_\ww=T_{\tilde{\ww}}$?
\end{itemize}
\end{pb}

When $\ww$ is monotilting, we have $$\{T^1_\ww,\ldots, T^{l(\ww)}_\ww\}\subseteq \Sub T_\ww=\Sub T_{\tilde{\ww}}=\add \{T^1_{\tilde{\ww}}, \ldots, T^{l(\tilde{\ww})}_{\tilde{\ww}}\}.$$ Hence $l(\ww)\leq l(\tilde{\ww})$ and we expect that $\tilde{\ww}$ is obtained by enlarging some rearrangement of $\ww$.

\begin{exa}\label{examplecompletion}
Let $Q$ be the quiver $\xymatrix@-.5cm@R=-1mm{ &2\ar[dr] & \\ 1\ar[dr]\ar[ur] && 4\\ & 3\ar[ur]&} $ and $\ww:=s_1s_2s_3s_4s_2s_3s_1s_4$. Then $\ww$ is monotilting and we have $$T_\ww= {\bsm &&4&&&&4&&\\&2&&3&&2&&3&\\1&&&&1&&&&1\esm} \oplus {\bsm 4\\3\\1\esm}\oplus {\bsm 4\\2\\1\esm}\oplus {\bsm &&4&&&&4&&&&4&&\\&2&&3&&2&&3&&2&&3&\\1&&&&1&&&&1&&&&1\esm}.$$
Then $\ww$ is not $c$-sortable, $\Sub T_\ww$ is finite and one can check that $\tilde{\ww}=s_1s_2s_3s_4s_1s_2s_3s_4s_1s_2s_3s_4s_2s_3$.
\end{exa}

When $\ww$ is $c$-sortable, $\ww$ is a monotilting word and $T_\ww$ coincides with $L_\ww$ given by the layers. In general $L_\ww$ is not a $kQ$-module, but as we have seen there is an indecomposable $kQ$-module associated with each indecomposable summand of $L_\ww$, and hence a $kQ$-module $(L_\ww)_Q$ associated with $L_\ww$. In this connection we have the following questions:

\begin{pb}:
\begin{itemize}\item[(1)] For which $\ww$ does the following hold ?
\begin{itemize}\item[(a)] each indecomposable summand of $(L_\ww)_Q$ is rigid,
\item[(b)] $(L_\ww)_Q$ is a tilting module,
\item[(c)] $\ww$ is tilting and $T_\ww=(L_\ww)_Q$,
\item[(d)] $\ww$ is monotilting and $T_\ww=(L_\ww)_Q$.
\end{itemize}
\item[(2)] If $\ww$ is monotilting and $(L_\ww)_Q$ is rigid, do we have $T_\ww=(L_\ww)_Q$?
\end{itemize}
\end{pb}

As we already saw in Example \ref{exampleLQnonrigid}, it can happen that (a) fails. In this example, one can check that $\ww$ is monotilting.

\begin{exa}
 Let $Q$ be the quiver $\xymatrix@-.5cm@R=-1mm{ &2\ar[dr] & \\ 1\ar[ur]\ar[rr] && 3}$, and $\ww:=s_1s_2s_3s_2s_1s_2$. The word $\ww'=s_1s_2s_3s_2s_1$ is monotilting and we have
$T_{\ww'}={\bsm &3&&&&&\\1&&2&&3&&\\&&&1&&2&\\&&&&&&1\esm}\oplus {\bsm 3\\1\esm}\oplus {\bsm &3&&\\1&&2&\\&&&1\esm}$
To exchange ${\bsm 3\\1\esm}$ we have to use the minimal right approximation $\xymatrix{g:{\bsm &3&&&&&\\1&&2&&3&&\\&&&1&&2&\\&&&&&&1\esm}\ar[r] & {\bsm 3\\1\esm}}.$ Hence $\ww$ is a tilting word which is not monotilting and we get
$T_\ww={\bsm &3&&&&&\\1&&2&&3&&\\&&&1&&2&\\&&&&&&1\esm}\oplus {\bsm 2&&3&&\\&1&&2&\\&&&&1\esm}\oplus {\bsm &3&&\\1&&2&\\&&&1\esm}.$
The cluster-tilting object $M_\ww$ of $\Sub \Lambda_w$ associated with $\ww$ has the indecomposable summands:
$$M_\ww:={\bsm 1\esm}\oplus {\bsm 2\\1\esm}\oplus {\bsm &3&&\\1&&2&\\&&&1\esm}\oplus {\bsm &&2&\\&3&&1\\1&&&\esm}\oplus {\bsm &&&1&&&\\&&2&&3&&\\&3&&1&&2&\\1&&&&&&1\esm}\oplus {\bsm &&&&2&&\\&&&1&&3&\\&&3&&2&&1\\&2&&1&&&\\1&&&&&&\esm}$$
We then see that $T_\ww=(L_\ww)_Q$, even though $\ww$ is not a monotilting word.
\end{exa}

\begin{exa} Let $Q$ and $\ww$ be as in Example \ref{examplecompletion}. Then we have $$M_\ww={\bsm 1\esm}\oplus{\bsm 2\\1\esm}\oplus {\bsm 3\\1\esm}\oplus{\bsm &&4&&\\&2&&3&\\1&&&&1\esm}\oplus {\bsm &2&&&\\1&&4&&\\&&&3&\\&&&&1\esm}\oplus {\bsm &3&&&\\1&&4&&\\&&&2&\\&&&&1\esm}\oplus {\bsm &&&&1&&&&\\&&&2&&3&&&\\&&4&&1&&4&&\\&3&&&&&&2&\\1&&&&&&&&1\esm}\oplus {\bsm &&4&&\\&2&&3&\\1&&4&&1\esm}.$$
Therefore we obtain
$(L_\ww)_Q={\bsm &&4&&&&4&&\\&3&&2&&3&&2& \\ 1&&&&1&&&&1\esm}\oplus{\bsm 4\\3\\1\esm}\oplus {\bsm 4\\2\\1\esm}\oplus {\bsm 4 \esm}.$
Each indecomposable summand is rigid, but $(L_\ww)_Q$ is not a tilting module. Therefore we can have (a) without (b).
 \end{exa}

%Finally note that the answer to question (2) is negative if we replace monotilting by tilting. 

%\begin{exa}
 %Let $Q$ be the quiver $\xymatrix@-.5cm@R=-1mm{ &2\ar[dr] & \\ 1\ar[ur]\ar[rr] && 3}$, and $\ww:=s_1s_2s_3s_2s_1s_2s_3$. Then one can show that $(L_\ww)_Q$ is a tilting module {\color{red} Now I am not so sure, is $(L_\ww)_Q$ really tilting?}, but $T_\ww\ncong (L_\ww)_Q.$
%\end{exa}

\newcommand{\etalchar}[1]{$^{#1}$}
\def\cprime{$'$}
\providecommand{\bysame}{\leavevmode\hbox to3em{\hrulefill}\thinspace}
\providecommand{\MR}{\relax\ifhmode\unskip\space\fi MR }
% \MRhref is called by the amsart/book/proc definition of \MR.
\providecommand{\MRhref}[2]{%
  \href{http://www.ams.org/mathscinet-getitem?mr=#1}{#2}
}
\providecommand{\href}[2]{#2}


\begin{thebibliography}{BMR{\etalchar{+}}06}

\bibitem[AIRT]{AIRT2}
C.~Amiot, O.~Iyama, I.~Reiten, and G.~Todorov, \emph{Functorial approach to the layers of modules associated to words in Coxeter groups}, in preparation.

\bibitem[Ami09a]{Ami3}
C.~Amiot, \emph{Cluster categories for algebras of global dimension 2 and
  quivers with potential}, Ann. Inst. Fourier (2009), Vol. 59 no 6, pp 2525--2590.

\bibitem[Ami08]{Ami2}
\bysame, \emph{Sur les petites cat\'egories triangul\'ees}, Ph.D. thesis
  (2008), http://www.math.uni-bonn.de/people/amiot/these.pdf.

\bibitem[Ami09b]{Ami4}
\bysame, \emph{A derived equivalence between cluster equivalent categories},
  preprint (2009), arXiv: math. RT/0911.5410.

\bibitem[ART11]{ART}
C.~Amiot, I. Reiten, and G.~Todorov, \emph{The ubiquity of the generalized
 cluster categories}, Adv. Math., 226 (2011), pp. 3813-3849.


\bibitem[APR79]{APR}
M.~Auslander, M.~I. Platzeck, and I.~Reiten, \emph{Coxeter functors without
  diagrams}, Trans. Amer. Math. Soc. \textbf{250} (1979), 1--46.

\bibitem[AR96]{AR96}
M. Auslander, I. Reiten, \emph{$D$Tr-periodic modules and functors},
Representation theory of algebras (Cocoyoc, 1994),  39--50,
CMS Conf. Proc., 18, Amer. Math. Soc., Providence, RI, 1996.

\bibitem[AS81]{AS81}
M. Auslander, S. O. Smalo, \emph{Almost split sequences in subcategories},
J. Algebra  69  (1981), no. 2, 426--454. 

\bibitem[BB05]{BB}
A.~Bj{\"o}rner and F.~Brenti, \emph{Combinatorics of {C}oxeter groups},
  Graduate Texts in Mathematics, vol. 231, Springer, New York, 2005.

\bibitem[BGP73]{BGP}
I.~N. Bern{\v{s}}te{\u\i}n, I.~M. Gel{\cprime}fand, and V.~A. Ponomarev,
  \emph{Coxeter functors, and {G}abriel's theorem}, Uspehi Mat. Nauk
  \textbf{28} (1973), no.~2(170), 19--33.

\bibitem[Boc08]{Boc08}
R.~Bocklandt, \emph{Graded Calabi Yau algebras of dimension 3},
  J. Pure Appl. Algebra  212  (2008),  no. 1, 14--32.

\bibitem[BIRS09a]{Bua2}
A.~B. Buan, O.~Iyama, I.~Reiten, and J.~Scott, \emph{Cluster structures for
  2-{C}alabi-{Y}au categories and unipotent groups}, Compos. Math. \textbf{145}
  (2009), 1035--1079.

\bibitem[BIRS09b]{BIRSm}
A.~B. Buan, O.~Iyama, I.~Reiten, and D.~Smith, \emph{Mutation of
  cluster-tilting objects and potentials}, to appear in Amer. Journ. of Math.
  (2009), arXiv:math. RT/08043813.

\bibitem[BMR{\etalchar{+}}06]{Bua}
A.~B. Buan, R.~Marsh, M.~Reineke, I.~Reiten, and G.~Todorov, \emph{Tilting
  theory and cluster combinatorics}, Adv. Math. \textbf{204} (2006), no.~2,
  572--618.

\bibitem[CB00]{CB00}
W.~Crawley-Boevey, \emph{On the exceptional fibres of Kleinian singularities},
  Amer. J. Math.  122  (2000),  no. 5, 1027--1037.

\bibitem[DWZ08]{DWZ}
H.~Derksen, J.~Weyman, and A.~Zelevinsky, \emph{Quivers with potentials and
  their representations. {I}. {M}utations}, Selecta Math. (N.S.) \textbf{14}
  (2008), no.~1, 59--119.

\bibitem[ES98]{ES98} 
K. Erdmann, N. Snashall, \emph{Preprojective algebras of Dynkin type, periodicity
and the second Hochschild cohomology},  Algebras and modules, II (Geiranger, 1996),
183--193, CMS Conf. Proc., 24, Amer. Math. Soc., Providence, RI, 1998.

\bibitem[FZ02]{FZ1}
S.~Fomin and A.~Zelevinsky, \emph{Cluster algebras. {I}. {F}oundations}, J.
  Amer. Math. Soc. \textbf{15} (2002), no.~2, 497--529 (electronic).

\bibitem[GLS06]{Gei3}
C.~Geiss, B.~Leclerc, and J.~Schr{\"o}er, \emph{Rigid modules over
  preprojective algebras}, Invent. Math. \textbf{165} (2006), no.~3, 589--632.

\bibitem[GLS07]{GLS07}
C.~Geiss, B.~Leclerc, and J.~Schr{\"o}er, \emph{Semicanonical bases and
preprojective algebras. II. A multiplication formula},  Compos. Math.  143  (2007),  no. 5, 1313--1334.

\bibitem[GLS08]{Gei2}
\bysame, \emph{Partial flag varieties and preprojective algebras}, Ann. Inst.
  Fourier (Grenoble) \textbf{58} (2008), no.~3, 825--876.

\bibitem[GLS10]{GLS}
\bysame, \emph{Kac-{M}oody groups and cluster algebras}, preprint (2010),
  arXiv:math. RT/1001.3545.

\bibitem[HU89]{HU}
D.~Happel and L.~Unger, \emph{Almost complete tilting modules}, Proc. Amer.
  Math. Soc. \textbf{107} (1989), no.~3, 603--610.

\bibitem[IT09]{IT}
C. Ingalls, H. Thomas, \emph{Noncrossing partitions and representations of quivers}. Compos.Math. 145(2009), no 6, 1533-1562.


\bibitem[IO09]{IO}
O.~Iyama and S.~Oppermann, \emph{$n$-representation-finite algebras and
  $n$-{APR}-tilting}, to appear in Trans. Amer. Math. Soc. (2009), arXiv:math. RT/09090593.

\bibitem[IR08]{IR}
O.~Iyama and I.~Reiten, \emph{Fomin-{Z}elevinsky mutation and tilting modules
  over {C}alabi-{Y}au algebras}, Amer. J. Math. \textbf{130} (2008), no.~4,
  1087--1149.

\bibitem[IR10]{IR2}
\bysame, \emph{2-{A}uslander algebras associated with reduced words in
  {C}oxeter groups}, to appear in Int. Math. Res. Not. (2010), arXiv:math.RT/1002.3247.

\bibitem[Kac80]{Kac80}
V.~G.~Kac, \emph{Infinite root systems, representations of graphs and invariant theory},
Invent. Math. 56 (1980), no. 1, 57--92. 

\bibitem[Kel94]{Kel9}
B.~Keller, \emph{Deriving {DG} categories}, Ann. Sci. \'Ecole Norm. Sup. (4)
  \textbf{27} (1994), no.~1, 63--102.

\bibitem[Kel05]{Kel}
\bysame, \emph{On triangulated orbit categories}, Doc. Math. \textbf{10}
  (2005), 551--581 (electronic).

\bibitem[Kel09]{Kel10}
\bysame, \emph{Deformed {C}alabi-{Y}au completions}, arXiv:math. RT/09083499.

\bibitem[KR08]{Kel4}
B.~Keller and I.~Reiten, \emph{Acyclic {C}alabi-{Y}au categories}, Compos.
  Math. \textbf{144} (2008), no.~5, 1332--1348, With an appendix by Michel Van
  den Bergh.

\bibitem[KV87]{Kel6}
B.~Keller and D.~Vossieck, \emph{Sous les cat\'egories d\'eriv\'ees}, C. R.
  Acad. Sci. Paris S\'er. I Math. \textbf{305} (1987), no.~6, 225--228.

\bibitem[Rea07]{Rea}
N.~Reading, \emph{Clusters, {C}oxeter-sortable elements and noncrossing
  partitions}, Trans. Amer. Math. Soc. \textbf{359} (2007), no.~12, 5931--5958.

\bibitem[Rei06]{Rei06}
I.~Reiten, \emph{Tilting theory and cluster algebras}, 
Proceedings on Workshop in Representation Theory, Trieste 2006, arXiv:math.RT/1012.6014.

\bibitem[RS91]{RS}
C.~Riedtmann and A.~Schofield, \emph{On a simplicial complex associated with tilting modules},
  Comment. Math. Helv.  66  (1991),  no. 1, 70--78.

\bibitem[ST01]{ST}
P.~Seidel and R.~Thomas, \emph{Braid group actions on derived categories
  of coherent sheaves}, Duke Math. J. \textbf{108} (2001), no.~1, 37--108.


\bibitem[Tho]{Tho}
H.~Thomas, \emph{Finite torsion classes and c-sortable elements of w}, in
  preparation.

\bibitem[Ung90]{Ung}
L.~Unger, \emph{Schur modules over wild, finite-dimensional path algebras
  with three simple modules}, J. Pure Appl. Algebra \textbf{64} (1990), no.~2,
  205--222.

\bibitem[Y]{Y}
K.~Yamaura, \emph{Twisted preprojective algebras}, in preparation.

\end{thebibliography}
\end{document}